\documentclass[12pt]{amsart}

\usepackage{amsmath}
\usepackage{amsfonts}
\usepackage{amssymb}
\usepackage[all]{xy}           
\usepackage{bbding}
\usepackage{txfonts}
\usepackage{amscd}

\usepackage[shortlabels]{enumitem}
\usepackage{ifpdf}
\ifpdf
  \usepackage[colorlinks,final,
  hyperindex]{hyperref}
\else
  \usepackage[colorlinks,final,
  hyperindex]{hyperref}
\fi
\usepackage{tikz}
\usepackage[active]{srcltx}

\makeatletter
\numberwithin{equation}{section}

\newtheorem{thm}{Theorem}[section]
\newtheorem{lem}[thm]{Lemma}
\newtheorem{cor}[thm]{Corollary}
\newtheorem{pro}[thm]{Proposition}
\newtheorem{ex}[thm]{Example}
\newtheorem{rmk}[thm]{Remark}
\newtheorem{defi}[thm]{Definition}

\newcommand{\fu}{\mathbf u}
\newcommand{\fl}{\mathbf l}
\newcommand{\fr}{\mathbf r}
\newcommand{\ffl}{\tilde{\mathbf{l}}}
\newcommand{\ffr}{\tilde{\mathbf{r}}}

\newcommand{\g}{\mathfrak{g}}

\newcommand{\gl}{\mathfrak{gl}}

\newcommand{\kl}{\mathfrak{l}}
\newcommand{\kr}{\mathfrak{r}}
\newcommand{\kkl}{\tilde{\mathfrak{l}}}
\newcommand{\kkr}{\tilde{\mathfrak{r}}}

\newcommand{\bz}{\mathbb{Z}}

\newcommand{\ad}{\mathrm{ad}}

\newcommand{\CYBE}{\mathrm{CYBE}}
\newcommand{\LYBE}{\mathrm{LYBE}}
\newcommand{\PYBE}{\mathrm{PYBE}}
\newcommand{\PoiYBE}{\mathrm{PoiYBE}}
\newcommand{\DPYBE}{\mathrm{DPYBE}}

\newcommand{\id}{\mathrm{id}}

\begin{document}

\title[Dual pre-Poisson bialgebras and Poisson bialgebras ]
{Quasi-triangular dual pre-Poisson bialgebras and its connection with Poisson bialgebras}

\author{Bo Hou}
\address{School of Mathematics and Statistics, Henan University, Kaifeng 475004, China}
\email{bohou1981@163.com}

\author{Ru Li}
\address{School of Mathematics and Statistics, Henan University, Kaifeng 475004,
China}
\email{13037698973@163.com}


\begin{abstract}
In this paper, the notions of quasi-triangular and factorizable dual pre-Poisson bialgebras
are introduced. A factorizable dual pre-Poisson bialgebra induces a factorization of the
underlying dual pre-Poisson algebra, and the double of any dual pre-Poisson bialgebra
is factorizable. We introduce the notion of quadratic Rota-Baxter dual pre-Poisson algebras
and show that there is a one-to-one correspondence between factorizable dual pre-Poisson
bialgebras and quadratic Rota-Baxter Poisson algebras of nonzero weights.
Moreover, a method of constructing infinite-dimensional dual pre-Poisson bialgebras
using finite-dimensional Poisson bialgebras is given. We prove that there is a
completed dual pre-Poisson bialgebra structure the tensor product of a Poisson bialgebra
and a quadratic $\bz$-graded perm algebra, and this completed dual pre-Poisson bialgebra
structure is coboundary (resp. quasi-triangular, triangular) if the original Poisson
bialgebra is coboundary (resp. quasi-triangular, triangular). The induced factorizable
finite-dimensional dual pre-Poisson bialgebras are considered.
\end{abstract}

\keywords{dual pre-Poisson bialgebra, Poisson bialgebra, Poisson Yang-Baxter equation,
dual pre-Poisson Yang-Baxter equation, $\mathcal{O}$-operator}
\makeatletter
\@namedef{subjclassname@2020}{\textup{2020} Mathematics Subject Classification}
\makeatother
\subjclass[2020]{
17A30, 
17B38, 
17B62, 
17B63. 
}

\maketitle

\vspace{-6mm}
\tableofcontents 



\vspace{-8mm}

\section{Introduction}\label{sec:intr}

In this paper, we further investigate the bialgebra theory of dual pre-Poisson algebras.
We establish the quasi-triangular and factorizable theories in the context of
dual pre-Poisson bialgebras, synthesizing concepts from both quasi-triangular perm
bialgebras and quasitriangular Leibniz bialgebras. Moreover, we also propose a method
for constructing infinite-dimensional dual pre-Poisson bialgebras using the Poisson bialgebras.

A Poisson algebra is both a Lie algebra and a commutative associative algebra which are
compatible in a certain sense, which serve as fundamental structures in various areas of
mathematics and mathematical physics, including Poisson geometry,
algebraic geometry, classical and quantum mechanics,quantum groups and quantization
theory \cite{Vai,Wei,GK,Pol,Arn,Hue,Kon}.
The notion of pre-Poisson algebras was first introduced by Aguiar \cite{Agu},
which combines a Zinbiel algebra and a pre-Lie algebra on the same vector space
satisfying some compatibility conditions. It is also known that a Rota-Baxter operator
on a Poisson algebra can produce a pre-Poisson algebra. The operad of pre-Poisson
algebras is Koszul \cite{DT}, and is the disuccessor of the operad of Poisson algebras
\cite{BBGN}.

The notion of dual pre-Poisson algebras (DPP algebras) was first given by Aguiar in
\cite{Agu}, which was also introduced in \cite{Uch} as the Koszul duality of the
pre-Poisson algebra in the sense of algebraic operads.
A DPP algebra contains a perm algebra and a Leibniz algebra such that some
compatibility conditions are satisfied, which is a special DPP algebra
introduced by Loday \cite{Lod}. Each Poisson algebra can be viewed as a special
DPP algebra. There is a natural construction of DPP algebras from Poisson algebras
equipped with average operators \cite{Agu}. The operad of DPP algebras is the
duplicator of the operad of Poisson algebras \cite{PBGN}. Recently, Lu has proved
that the operad of DPP algebras is Koszul, and shown that DPP algebras are the
semi-classical limits of diassociative formal deformations of perm algebras \cite{Lu}.

A bialgebra structure on a given algebra structure is obtained by a comultiplication
together with some compatibility conditions between the multiplication and the
comultiplication. A famous example of bialgebras is Lie bialgebras \cite{Dri},
which can be viewed as a linearization of Poisson-Lie groups.
In recent years, the bialgebra theories of various algebra structures have been
extensive developed, such as left-symmetric bialgebras \cite{Bai}, antisymmetric
infinitesimal bialgebras \cite{Bai1}, Novikov bialgebras \cite{HBG}, perm bialgebras
\cite{Hou,LZB}, Poisson bialgebras \cite{NB}, etc. Pre-Poisson bialgebras were
studied in \cite{WS} and dual pre-Poisson bialgebras (DPP bialgebras) were
introduced in \cite{Lu}. In this paper, we further investigate the bialgebra
theory of DPP algebras, especially quasi-triangular, triangular and factorizable
DPP bialgebras.

In the Lie bialgebra theory, quasi-triangular Lie bialgebras especially triangular Lie
bialgebras play important roles in mathematical physics \cite{Sem,BGN,RS}.
Factorizable Lie bialgebras constitute a special subclass of quasi-triangular
Lie bialgebras that bridge classical $r$-matrices with certain factorization problems,
which is found diverse applications in integrable systems \cite{Dri,LS}.
Recently, quasi-triangular bialgebra structure, especially
factorizable bialgebra structure, have received a lot of attention. Factorizable Lie
bialgebras \cite{LS}, factorizable antisymmetric infinitesimal bialgebras \cite{SW},
factorizable Leibniz bialgebras \cite{BLST}, factorizable Novikov
bialgebras \cite{CH}, factorizable perm bialgebras \cite{Lin}
and factorizable Poisson bialgebras \cite{LL,GH} have been further studied.

In this paper, we first introduce an invariant condition for a 2-tensor and notion of
the (classical) dual pre-Poisson Yang-Baxter equation ($\DPYBE$) in a DPP algebra,
by which we successfully define quasi-triangular DPP bialgebras and factorizable DPP
bialgebras. A factorizable DPP bialgebra induces a factorization of the underlying
DPP algebra and the double of any DPP bialgebra naturally admits a factorizable DPP
bialgebra structure. Furthermore, we introduce the notion of quadratic Rota-Baxter DPP
algebras of weights, demonstrate that a quadratic Rota-Baxter DPP algebra of weight
zero induces a triangular DPP bialgebra, and establish a one-to-one correspondence between
quadratic Rota-Baxter DPP algebras of nonzero weights and factorizable DPP bialgebras.
We summarize these results in the following diagram:
\begin{equation*}
\xymatrix@C=0.5cm@R=0.5cm{
&&\txt{\footnotesize triangular \\ \footnotesize DPP bialgebras}
&\txt{\footnotesize quadratic Rota-Baxter \\ 
\footnotesize DPP algebras of weight zero} \ar[l]\\
\txt{\footnotesize solutions of \\ \footnotesize the $\DPYBE$ }\ar[r]
& \txt{\footnotesize quasi-triangular \\ \footnotesize DPP bialgebras}\ar[r]\ar[ur]
&\txt{\footnotesize factorizable \\ \footnotesize DPP bialgebras}\ar@{<->}[r]
&\txt{\footnotesize quadratic Rota-Baxter \\
\footnotesize DPP algebras of nonzero weights}}
\end{equation*}

Note that, the operad of DPP algebras, being quadratic binary, is isomorphic to the
Manin white product of the operad of perm algebras and the operad of Poisson algebras
\cite{PBGN,Uch}. We elevate this conclusion to the level of bialgebras and give a
construction of infinite-dimensional DPP bialgebras. More precisely, we introduce
a class of infinite-dimensional DPP bialgebras, the completed DPP bialgebras.
We prove that there is a completed DPP bialgebra structure on the tensor product
Poisson bialgebra and a quadratic $\bz$-graded perm algebra. Moreover, by discussing
the relationship between solutions of the $\DPYBE$ in the induced DPP algebra and
solutions of the $\PoiYBE$ in the original Poisson algebra, we can conclude that this
construction preserves many properties of bialgebras, such as the coboundary (resp.
quasi-triangular, triangular) structure.

\smallskip\noindent
{\bf Theorem } (Theorems \ref{thm:P+perm=dP} and \ref{thm:indu-DPbia})
{\it Let $(P, \cdot, [-,-], \Delta, \delta)$ be a finite-dimensional Poisson bialgebra and
$(B=\oplus_{i\in\bz}B_{i}, \circ_{B}, \omega)$ be a quadratic $\bz$-graded perm algebra.
Then, there is a completed DPP bialgebra structure on the tensor product
on $P\otimes B$. which is denoted by $(P\otimes B, \circ, \ast, \nu, \vartheta)$.

In particular, the completed DPP bialgebra $(P\otimes B, \circ, \ast, \nu,
\vartheta)$ is coboundary (resp. quasi-triangular, triangular) if
$(P, \cdot, [-,-], \Delta, \delta)$ is coboundary (resp. quasi-triangular, triangular).}

\smallskip
Moreover, if the quadratic $\bz$-graded perm algebra $(B=\oplus_{i\in\bz}B_{i},
\circ_{B}, \omega)$ is just a finite-dimensional quadratic perm algebra,
we can also get that the DPP bialgebra $(P\otimes B, \circ, \ast, \nu,
\vartheta)$ is factorizable if $(P, \cdot, [-,-], \Delta, \delta)$ is factorizable.
And in this situation, we can derive an $\mathcal{O}$-operator on induced DPP algebra
$(P\otimes B, \circ, \ast)$ from an $\mathcal{O}$-operator on Poisson bialgebra
$(P, \cdot, [-,-])$.

The paper is organized as follows.
In Section \ref{sec:poidia}, we recall the notions of DPP algebras and
representations of DPP algebras, and show that there is a DPP algebra structure on
the tensor product of a Poisson algebra and a perm algebra.
In Section \ref{sec:bialg}, we recall the notion the $\DPYBE$ in a DPP algebra,
and introduce the notions of quasi-triangular DPP bialgebras, triangular DPP bialgebras
and factorizable DPP bialgebras. Each symmetric solution of the $\DPYBE$ induces a
triangular DPP bialgebra. The double of any DPP bialgebra is factorizable (see Proposition
\ref{pro:fact-doub}). Moreover, we introduce the notion of quadratic Rota-Baxter DPP
algebras, and establish a one-to-one correspondence between quadratic Rota-Baxter DPP
algebras of nonzero weights and factorizable DPP bialgebras in Theorem \ref{thm:fpb-qrPd}.
In Section \ref{sec:poi-di}, we show in Theorem \ref{thm:P+perm=dP} that the tensor
product of a finite-dimensional Poisson bialgebra and a quadratic $\bz$-graded perm
algebra can be naturally endowed with a completed DPP bialgebra. Constructions of some
special completed solutions of the $\DPYBE$ in the induced $\bz$-graded DPP algebra
from the solutions of the $\PoiYBE$ in the original Poisson algebra are obtained in
Proposition \ref{pro:PYBE-DPYBE}. Therefore, in Theorem \ref{thm:indu-DPbia}, we show
that the induced completed DPP bialgebra is coboundary (resp. quasi-triangular, triangular)
if the original Poisson bialgebra is coboundary (resp. quasi-triangular, triangular).
In particular, the factorizablility of bialgebra and the $\mathcal{O}$-operators
are discussed whenever the induced DPP bialgebra is finite-dimensional.

Throughout this paper, we fix $\Bbbk$ as a field of characteristic zero.
All the vector spaces, algebras are over $\Bbbk$ and are finite-dimensional
unless otherwise specified, and all tensor products are also over $\Bbbk$.
We denote the identity map by $\id$. For any finite-dimensional $\Bbbk$-vector
space $V$, we denote $V^{\ast}$ the dual space.

\bigskip

\section{Dual pre-Poisson algebras and their representations} \label{sec:poidia}
In this section, we recall the notions of dual pre-Poisson algebras and
representations of dual pre-Poisson algebras. Recall that a (left) {\bf Leibniz algebra}
$(A, \ast)$ is a vector space $A$ together with a bilinear map $\ast: A\otimes
A\rightarrow A$ satisfying the following Leibniz identity:
$$
a_{1}\ast(a_{2}\ast a_{3})=(a_{1}\ast a_{2})\ast a_{3}+a_{2}\ast(a_{1}\ast a_{3}),
$$
for any $a_{1}, a_{2}, a_{3}\in A$. A {\bf perm algebra} is a pair $(A, \circ)$,
where $A$ is a vector space and $\circ: A\otimes A\rightarrow A$ is a bilinear
operator such that for any $a_{1}, a_{2}, a_{3}\in A$,
$$
a_{1}\circ(a_{2}\circ a_{3})=(a_{1}\circ a_{2})\circ a_{3}=(a_{2}\circ a_{1})\circ a_{3},
$$
In a perm algebra $(A, \circ)$, we also have
$a_{1}\diamond(a_{2}\diamond a_{3})=a_{2}\diamond(a_{1}\diamond a_{3})$.

\begin{defi}[\cite{Agu}]\label{def:dip-alg}
A {\bf dual pre-Poisson algebra} (DPP algebra) $(A, \circ, \ast)$ is a vector space $A$
together with two bilinear maps $\circ, \ast: A\otimes A\rightarrow A$ such that
$(A, \circ)$ is a perm algebra, $(A, \ast)$ is a Leibniz algebra and the following
equalities hold for all $a_{1}, a_{2}, a_{3}\in A$,
\begin{align*}
&\qquad\quad (a_{1}\circ a_{2})\ast a_{3}=a_{1}\circ(a_{2}\ast a_{3})
+a_{2}\circ(a_{1}\ast a_{3}),\\
&(a_{1}\ast a_{2})\circ a_{3}=a_{1}\ast(a_{2}\circ a_{3})-a_{2}\circ(a_{1}\ast a_{3})
=-(a_{2}\ast a_{1})\circ a_{3}.
\end{align*}
\end{defi}

In a DPP algebra $(A, \circ, \ast)$, we also have $(a_{1}\circ a_{2})\ast a_{3}
=(a_{2}\circ a_{1})\ast a_{3}$ and $a_{1}\ast(a_{2}\circ a_{3})-a_{2}\circ(a_{1}\ast a_{3})
=a_{1}\circ(a_{2}\ast a_{3})-a_{2}\ast(a_{1}\circ a_{3})$ for any $a_{1}, a_{2}, a_{3}\in A$.
Let $(A, \circ, \ast)$ and $(A', \circ', \ast')$ be two DPP algebras. A linear
map $f: A\rightarrow A'$ is called a {\bf homomorphism of DPP algebras}
if $f(a_{1}\circ a_{2})=f(a_{1})\circ' f(a_{2})$ and $f(a_{1}\ast a_{2})=f(a_{1})\ast'
f(a_{2})$ for any $a_{1}, a_{2}\in A$.

\begin{ex}\label{ex:alg}
Let $A$ be a 2-dimension vector space with $\{e_{1}, e_{2}\}$ a base.
We defined the multiplications $\circ$ and $\ast$ by $e_{2}\ast e_{2}=e_{1}=
e_{2}\circ e_{2}$. Then $(A, \circ, \ast)$ is a 2-dimension DPP algebra.
\end{ex}

Next, we consider the representations of a DPP algebra. Recall that a
{\bf representation $(V, \kl, \kr)$ of a perm algebra} $(A, \circ)$ is a vector space
$V$ with two linear maps $\kl, \kr: A\rightarrow\gl(V)$ such that
\begin{align*}
&\qquad\; \kl(a_{1}\circ a_{2})=\kl(a_{1})\kl(a_{2})=\kl(a_{2})\kl(a_{1}),\\
& \kr(a_{1}\circ a_{2})=\kr(a_{2})\kr(a_{1})=\kr(a_{2})\kl(a_{1})=\kl(a_{1})\kr(a_{2}),
\end{align*}
for any $a_{1}, a_{2}\in A$. Let $(A, \ast)$ be a Leibniz algebra, $V$ be a vector space
and $\kkl, \kkr: A\rightarrow\gl(V)$ be two linear maps. Then $(V, \kkl, \kkr)$ is
called a {\bf representation of $(A, \ast)$} if for any $a_{1}, a_{2}\in A$,
\begin{align*}
&\qquad\; \kkl(a_{1}\ast a_{2})=\kkl(a_{1})\kkl(a_{2})-\kkl(a_{2})\kkl(a_{1}),\\
&\kkr(a_{1})\kkr(a_{2})=\kkr(a_{1}\ast a_{2})+\kkl(a_{2})\kkr(a_{1})=-\kkr(a_{1})\kkl(a_{2}).
\end{align*}

\begin{defi}\label{def:dprep}
Let $(A, \circ, \ast)$ be a DPP algebra, $V$ be a vector space and $\kl, \kr, \kkl,
\kkr: A\rightarrow\gl(V)$ be four linear maps. If $(V, \kl, \kr)$ is a representation of
$(A, \circ)$, $(V, \kkl, \kkr)$ is a representation of $(A, \ast)$ and the for any
$a_{1}, a_{2}\in A$,
\begin{align*}
& \qquad\qquad\qquad \kkl(a_{1}\circ a_{2})=\kl(a_{1})\kkl(a_{2})+\kl(a_{2})\kkl(a_{1}),\\
& \qquad\qquad \kkr(a_{2})\kl(a_{1})=\kl(a_{1})\kkr(a_{2})+\kr(a_{1}\ast a_{2})
=\kkr(a_{2})\kr(a_{1}),\\
& \qquad\qquad \kl(a_{1}\ast a_{2})=\kkl(a_{1})\kl(a_{2})-\kl(a_{2})\kkl(a_{1})
=-\kl(a_{2}\ast a_{1}),\\
& \kr(a_{2})\kkr(a_{1})=\kkr(a_{1}\circ a_{2})-\kl(a_{1})\kkr(a_{2})
=-\kr(a_{2})\kkl(a_{1})=\kr(a_{1}\ast a_{2})-\kkl(a_{1})\kr(a_{2}),
\end{align*}
then $(V, \kl, \kr, \kkl, \kkr)$ is called a {\bf representation of the DPP algebra}
$(A, \circ, \ast)$.
\end{defi}

For the representations of DPP algebras, we have the following equivalent
characterization.

\begin{pro}\label{pro:rep-dipoi}
Let $(A, \circ, \ast)$ be a DPP algebra, $V$ be a vector space and $\kl, \kr, \kkl,
\kkr: A\rightarrow\gl(V)$ be four linear maps. Define bilinear maps $\tilde{\circ},
\tilde{\ast}: (A\oplus V)\otimes(A\oplus V)\rightarrow A\oplus V$ by
\begin{align}
(a_{1}, v_{1})\tilde{\circ}(a_{2}, v_{2})&:=\big(a_{1}\circ a_{2},\ \
\kl(a_{1})(v_{2})+\kr(a_{2})(v_{1})\big), \label{sp1}\\
(a_{1}, v_{1})\tilde{\ast}(a_{2}, v_{2})&:=\big(a_{1}\ast a_{2},\ \
\kkl(a_{1})(v_{2})+\kkr(a_{2})(v_{1})\big),  \label{sp2}
\end{align}
for any $a_{1}, a_{2}\in A$ and $v_{1}, v_{2}\in V$. Then $(V, \kl, \kr, \kkl, \kkr)$
is a representation of $(A, \circ, \ast)$ if and only if $(A\oplus V, \tilde{\circ},
\tilde{\ast})$ is a DPP algebra. We denote this DPP algebra by
$A\ltimes V$, and call it the {\bf semidirect product} of $(A, \circ, \ast)$
by representation $(V, \kl, \kr, \kkl, \kkr)$.
\end{pro}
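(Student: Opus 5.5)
The plan is a direct verification: expand each defining identity of a DPP algebra on $A\oplus V$ and separate the $A$-component from the $V$-component. By \eqref{sp1} and \eqref{sp2}, the $A$-component of any iterated product of elements $(a_i,v_i)$ is the corresponding product in $A$. Since $(A,\circ,\ast)$ is already a DPP algebra, every identity holds automatically in the $A$-component. So $(A\oplus V,\tilde{\circ},\tilde{\ast})$ is a DPP algebra if and only if each identity holds in the $V$-component.

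Each identity is trilinear, and the $V$-component of a triple product is linear in $(v_1,v_2,v_3)$. It is therefore enough to test each identity on triples in which exactly one entry lies in $V$ and the others lie in $A$. Products of two elements of $V$ vanish, so triples with two or more entries in $V$ contribute nothing. For each identity I will substitute in turn $(a_1,a_2,v)$, $(a_1,v,a_3)$ and $(v,a_2,a_3)$. The result is an identity between operators built from $\kl,\kr,\kkl,\kkr$ applied to $v$.

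The identities fall into three groups.
\begin{itemize}
\item The perm axioms $x(yz)=(xy)z=(yx)z$ for $\tilde\circ$ give exactly the perm representation conditions on $(\kl,\kr)$. For example, with $v$ in the third slot we get $\kl(a_1)\kl(a_2)=\kl(a_1\circ a_2)=\kl(a_2)\kl(a_1)$.
\item The Leibniz identity for $\tilde\ast$ gives the Leibniz representation conditions on $(\kkl,\kkr)$.
\item The two mixed compatibility identities of Definition \ref{def:dip-alg}, each in its three placements of $v$, give the four lines of mixed conditions in Definition \ref{def:dprep}.
\end{itemize}
A sample of the third group: in $(x\circ y)\ast z=x\circ(y\ast z)+y\circ(x\ast z)$, putting $v$ in the third slot gives $\kkl(a_1\circ a_2)=\kl(a_1)\kkl(a_2)+\kl(a_2)\kkl(a_1)$. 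Putting $v$ in the first slot gives $\kkr(a_3)\kr(a_2)=\kr(a_2\ast a_3)+\kl(a_2)\kkr(a_3)$. Putting $v$ in the second slot gives $\kkr(a_3)\kl(a_1)=\kl(a_1)\kkr(a_3)+\kr(a_1\ast a_3)$. Together these are the second line of Definition \ref{def:dprep}. The remaining cases are handled the same way.

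The proof then reads in both directions. If $(V,\kl,\kr,\kkl,\kkr)$ is a representation, every $V$-component identity holds, so $A\ltimes V$ is a DPP algebra. Conversely, if $A\oplus V$ is a DPP algebra, the one-$v$ specializations recover each listed condition, since $v$ is arbitrary.

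The main obstacle is bookkeeping. I must check that the collection of equations obtained from all placements of $v$ is \emph{exactly} the list in Definition \ref{def:dprep}, with nothing missing and nothing extra. Some chained equalities in that definition may only arise after combining two specializations. An example is $\kl(a_1\ast a_2)=-\kl(a_2\ast a_1)$, which comes from $(x\ast y)\circ z=-(y\ast x)\circ z$ with $v=z$. Likewise, the last line, with its four-term chain, should come from the second compatibility identity with $v$ in the first and second slots, combined with the antisymmetry part. If some condition seems to be missing, I will derive it from the listed ones using the DPP axioms of $A$, or use the derived identity $(a_1\circ a_2)\ast a_3=(a_2\circ a_1)\ast a_3$ noted after Definition \ref{def:dip-alg}. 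I will match the specializations to the definition line by line, rather than writing out all the routine expansions.
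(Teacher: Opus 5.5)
Your proposal is correct and is exactly the component-wise verification the paper intends (its proof is just ``a straightforward check''), and your sample specializations and your assignment of identities to the lines of Definition~\ref{def:dprep} are right. One caution for the bookkeeping: the Leibniz identity with $v$ in the first slot gives $\kkr(a_{1})\kkr(a_{2})=\kkr(a_{2}\ast a_{1})-\kkl(a_{2})\kkr(a_{1})$, so the printed Leibniz-representation condition $\kkr(a_{1})\kkr(a_{2})=\kkr(a_{1}\ast a_{2})+\kkl(a_{2})\kkr(a_{1})$ is a typo (it already fails for the regular representation of $\mathfrak{sl}_{2}$ viewed as a DPP algebra with $\circ=0$), and the equivalence holds only once that condition is corrected.
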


\begin{proof}
It is a straightforward check.
\end{proof}

Let $(V, \kl, \kr, \kkl, \kkr)$ and $(V', \kl', \kr', \kkl', \kkr')$ be two representations
of a DPP algebra $(A, \circ, \ast)$. A linear map $f: V\rightarrow V'$ is called
a {\bf morphism of representations} if $f(\beta(a)(v))=\beta'(a)(f(v))$ for any
$a\in A$, $v\in V$ and $\beta\in\{\kl, \kr, \kkl, \kkr\}$. A morphism of representations
$f$ is said to be an {\bf isomorphism} if $f$ is a bijection. Define the left multiplication
maps $\fl_{A}, \ffl_{A}: A\rightarrow\gl(A)$ and the right multiplication maps
$\fr_{A}, \ffr_{A}: A\rightarrow\gl(A)$ by $\fl_{A}(a_{1})(a_{2})=a_{1}\circ a_{2}$,
$\ffl_{A}(a_{1})(a_{2})=a_{1}\ast a_{2}$, $\fr_{A}(a_{1})(a_{2})=a_{2}\circ a_{1}$ and
$\ffr_{A}(a_{1})(a_{2})=a_{2}\ast a_{1}$ respectively for all $a_{1}, a_{2}\in A$.
Then $(A, \fl_{A}, \fr_{A}, \ffl_{A}, \ffr_{A})$ is a representation of $(A, \circ, \ast)$,
which is called the {\bf regular representation} of $(A, \circ, \ast)$.

Let $V$, $W$ be two finite-dimensional $\Bbbk$-vector spaces. We denote
$\langle-,-\rangle$ the natural pairing between the spaces $V$ and $V^{\ast}$,
i.e., $\langle\xi, v\rangle:=\xi(v)\in\Bbbk$ for any $v\in V$ and $\xi\in V^{\ast}$.
For a linear map $f: V\rightarrow W$, we define the map $f^{\ast}: W^{\ast}
\rightarrow V^{\ast}$ by $\langle f^{\ast}(\xi), v\rangle
=\langle\xi, f(v)\rangle$ for any $v\in V$ and $\xi\in W^{\ast}$.
Let $(A, \circ, \ast)$ be a finite-dimensional DPP algebra.
For each $\beta\in\{\fl_{A}, \fr_{A}, \ffl_{A}, \ffr_{A}\}$,
we define a linear map $\beta^{\ast}: A\rightarrow\gl(A^{\ast})$ by
$$
\langle\beta^{\ast}(a_{1})(\xi),\; a_{2}\rangle
=-\langle\xi,\; \beta(a_{1})(a_{2})\rangle,
$$
for any $a_{1}, a_{2}\in A$ and $\xi\in A^{\ast}$.
For a DPP algebra $(A, \circ, \ast)$, one can check that $(A^{\ast},
\fl_{A}^{\ast}, \fl_{A}^{\ast}-\fr_{A}^{\ast}, \ffl_{A}^{\ast},
-\ffl_{A}^{\ast}-\ffr_{A}^{\ast})$ of $(A, \circ, \ast)$,
which is called the {\bf coregular representation} of $(A, \circ, \ast)$.

Let $\omega(-, -)$ be a bilinear form on a DPP algebra $(A, \circ, \ast)$.
Recall that
\begin{itemize}
\item[-] $\omega(-, -)$ is called {\bf nondegenerate} if
     $\omega(a_{1}, a_{2})=0$ for any $a_{2}\in A$, then $a_{1}=0$;
\item[-] $\omega(-, -)$ is called {\bf skew-symmetric} if $\omega(a_{1}, a_{2})
     =-\omega(a_{2}, a_{1})$, for any $a_{1}, a_{2}\in A$;
\item[-] $\omega(-, -)$ is called {\bf invariant} if $\omega(a_{1}\circ a_{2},\; a_{3})
     =\omega(a_{1},\; a_{2}\circ a_{3}-a_{3}\circ a_{2})$ and $\omega(a_{1}\ast a_{2},\;
     a_{3})=\omega(a_{1},\; a_{2}\ast a_{3}+a_{3}\ast a_{2})$ for any $a_{1}, a_{2},
     a_{3}\in A$.
\end{itemize}
A DPP algebra $(A, \circ, \ast)$ with a nondegenerate skew-symmetric invariant
bilinear form $\omega(-, -)$ is called a {\bf quadratic DPP algebra} and
denoted by $(A, \circ, \ast, \omega)$. It is easy to see that,
$\omega(a_{1}\circ a_{2},\; a_{3})=\omega(a_{2},\; a_{1}\circ a_{3})$ and
$\omega(a_{1}\ast a_{2},\; a_{3})=-\omega(a_{2},\; a_{1}\ast a_{3})$ for any
$a_{1}, a_{2}, a_{3}\in A$ if $(A, \circ, \ast, \omega)$ is quadratic.
Moreover, one can check that $(A, \fl_{A}, \fr_{A}, \ffl_{A}, \ffr_{A})$
and $(A^{\ast}, -\fl_{A}^{\ast}, \fr_{A}^{\ast}-\fl_{A}^{\ast}, \ffl_{A}^{\ast},
-\ffl_{A}^{\ast}-\ffr_{A}^{\ast})$ are isomorphic as representations of
the DPP algebra $(A, \circ, \ast)$ if and only if there exists a
nondegenerate skew-symmetric invariant bilinear form on $(A, \circ, \ast)$.

There is a close relationship between DPP algebra algebra and Poisson algebra.
We first give the definition of Poisson algebra. Recall that a {\bf commutative associative
algebra} $(A, \cdot)$ is a vector space $A$ with a bilinear map $\cdot: A\otimes A
\rightarrow A$ such that $a_{1}a_{2}=a_{2}a_{1}$ and $a_{1}(a_{2}a_{3})=(a_{1}a_{2})a_{3}$
for any $a_{1}, a_{2}, a_{3}\in A$, where $a_{1}a_{2}:=a_{1}\cdot a_{2}$ for brevity.
A {\bf Lie algebra} $(\g, [-,-])$ is a vector space $\g$ with an antisymmetric bilinear
bracket $[-,-]: \g\otimes\g\rightarrow\g$ such that $[g_{1}, [g_{2}, g_{3}]]
+[g_{2}, [g_{3}, g_{1}]]+[g_{3}, [g_{1}, g_{2}]]=0$ for any $g_{1}, g_{2}, g_{3}\in\g$.

\begin{defi}\label{def:p-alg}
A {\bf Poisson algebra} is a triple $(P, \cdot, [-,-])$, where $(P, \cdot)$ is a
commutative associative algebra and $(P, [-,-])$ is a Lie algebra, such that the
Leibniz rule holds:
$$
[p_{1},\; p_{2}p_{3}]=[p_{1}, p_{2}]p_{3}+p_{2}[p_{1}, p_{3}],
$$
for any $p_{1}, p_{2}, p_{3}\in P$.
\end{defi}

Like other types of dialgebra, Poisson algebra can be seen as a special type of
DPP algebra.
Let $(P, \cdot, [-,-])$ be a Poisson algebra. A linear map $\alpha: P\rightarrow P$
is called an {\bf averaging operator} if for any $p_{1}, p_{2}\in P$,
\begin{align*}
&\quad [\alpha(p_{1}),\; \alpha(p_{2})]=\alpha([\alpha(p_{1}),\; p_{2}]),\\
&\alpha(p_{1})\alpha(p_{2})=\alpha(\alpha(p_{1})p_{2})=\alpha(p_{1}\alpha(p_{2})).
\end{align*}
If $\alpha: P\rightarrow P$ is an averaging operator on a Poisson algebra $(P, \cdot, [-,-])$,
we have $[\alpha(p_{1}),\; \alpha(p_{2})]=\alpha([\alpha(p_{1}),\; p_{2}])=
\alpha([p_{1},\; \alpha(p_{2})])$. In \cite{Agu}, the authors have shown that
a Poisson algebra with an averaging operator induces a DPP algebra.

\begin{pro}[\cite{Agu}]\label{pro:avpoi}
Let $(P, \cdot, [-,-])$ be a Poisson algebra and $\alpha: P\rightarrow P$ be an averaging
operator. Define bilinear maps $\circ, \ast: P\otimes P\rightarrow P$ by
$$
p_{1}\circ p_{2}:=\alpha(p_{1})p_{2} \qquad\mbox{ and }\qquad
p_{1}\ast p_{2}:=[\alpha(p_{1}), p_{2}],
$$
for any $p_{1}, p_{2}\in P$. Then $(P, \circ, \ast)$ is a DPP algebra.
\end{pro}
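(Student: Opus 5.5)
We verify the axioms of Definition \ref{def:dip-alg} one at a time for $p_{1}\circ p_{2}=\alpha(p_{1})p_{2}$ and $p_{1}\ast p_{2}=[\alpha(p_{1}),p_{2}]$. Each verification reduces, after expanding, to commutativity and associativity of $\cdot$, the Jacobi identity, or the Leibniz rule in $P$. The only ingredient specific to $\alpha$ is a pair of consequences of the averaging identities, which we record first:
\[
\alpha(p_{1}\circ p_{2})=\alpha(p_{1})\alpha(p_{2}),\qquad
\alpha(p_{1}\ast p_{2})=[\alpha(p_{1}),\alpha(p_{2})].
\]
Both follow directly from the averaging identities. In particular $\alpha$ is a homomorphism from $(P,\circ,\ast)$ to $(P,\cdot,[-,-])$. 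Every composite multiplication can therefore be rewritten by pulling $\alpha$ through the left factor.

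\textbf{Perm axioms.} We have $p_{1}\circ(p_{2}\circ p_{3})=\alpha(p_{1})\alpha(p_{2})p_{3}$. Also $(p_{1}\circ p_{2})\circ p_{3}=\alpha(\alpha(p_{1})p_{2})p_{3}=\alpha(p_{1})\alpha(p_{2})p_{3}$. This expression is symmetric in $p_{1},p_{2}$ by commutativity, which gives the perm identities.

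\textbf{Leibniz axiom.} We have $p_{1}\ast(p_{2}\ast p_{3})=[\alpha(p_{1}),[\alpha(p_{2}),p_{3}]]$. On the other side, $(p_{1}\ast p_{2})\ast p_{3}=[[\alpha(p_{1}),\alpha(p_{2})],p_{3}]$. The Leibniz identity is then exactly the Jacobi identity applied to $\alpha(p_{1}),\alpha(p_{2}),p_{3}$.

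\textbf{First compatibility.} We have $(p_{1}\circ p_{2})\ast p_{3}=[\alpha(p_{1})\alpha(p_{2}),p_{3}]$. The right side is $\alpha(p_{1})[\alpha(p_{2}),p_{3}]+\alpha(p_{2})[\alpha(p_{1}),p_{3}]$. These agree by the Leibniz rule, used with antisymmetry to put the product in the first slot.

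\textbf{Second compatibility.} The left side is $(p_{1}\ast p_{2})\circ p_{3}=[\alpha(p_{1}),\alpha(p_{2})]p_{3}$, which is visibly antisymmetric in $p_{1},p_{2}$. This gives the equality with $-(p_{2}\ast p_{1})\circ p_{3}$. For the remaining equality we expand
\[
p_{1}\ast(p_{2}\circ p_{3})-p_{2}\circ(p_{1}\ast p_{3})
=[\alpha(p_{1}),\alpha(p_{2})p_{3}]-\alpha(p_{2})[\alpha(p_{1}),p_{3}].
\]
By the Leibniz rule this equals $[\alpha(p_{1}),\alpha(p_{2})]p_{3}$, as required.

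\textbf{Main point.} There is no real obstacle. The only care needed is to establish the two displayed identities first, using the correct one of the two equivalent forms of each averaging identity: $\alpha(\alpha(p_{1})p_{2})$ for $\circ$, and $\alpha([\alpha(p_{1}),p_{2}])$ for $\ast$. Everything else is a one-line application of the Poisson axioms.
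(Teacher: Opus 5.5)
The paper gives no proof of its own here; it simply attributes the result to Aguiar. Your argument is a correct and complete direct verification of every axiom in Definition~\ref{def:dip-alg}, which is the standard way to establish this. Organizing it around $\alpha(p_{1}\circ p_{2})=\alpha(p_{1})\alpha(p_{2})$ and $\alpha(p_{1}\ast p_{2})=[\alpha(p_{1}),\alpha(p_{2})]$ (so that $\alpha$ is a homomorphism onto $(P,\cdot,[-,-])$) is exactly right, and each remaining step does reduce to commutativity, the Jacobi identity, or the Leibniz rule as you describe.
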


Moreover, we can get a DPP algebra by the tensor product of a Poisson algebra and
a perm algebra.

\begin{pro}[\cite{Lu}]\label{pro:poi+perm}
Let $(P, \cdot, [-,-])$ be a Poisson algebra and $(B, \circ_{B})$ be a perm algebra.
Define bilinear maps $\circ, \ast: (P\otimes B)\otimes(P\otimes B)\rightarrow P\otimes B$ by
\begin{align*}
(p_{1}, b_{1})\circ(p_{2}, b_{2})&:=(p_{1}p_{2})\otimes(b_{1}\circ_{B} b_{2}),\\
(p_{1}, b_{1})\ast(p_{2}, b_{2})&:=[p_{1}, p_{2}]\otimes(b_{1}\circ_{B} b_{2}),
\end{align*}
for any $p_{1}, p_{2}\in P$ and $b_{1}, b_{2}\in B$.
Then $(P\otimes B, \circ, \ast)$ is a DPP algebra.
\end{pro}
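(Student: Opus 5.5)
The proof is a direct verification of the axioms in Definition \ref{def:dip-alg}. Everything reduces to identities in $(P,\cdot,[-,-])$ and in $(B,\circ_B)$, because both operations on $P\otimes B$ use the same perm product $b_1\circ_B b_2$ in the $B$-factor. By bilinearity it suffices to check each identity on pure tensors $x_i=p_i\otimes b_i$, $i=1,2,3$. For each axiom we compute both sides, factor out the $P$-part and the $B$-part, and then choose the perm identity $b_1\circ_B(b_2\circ_B b_3)=(b_1\circ_B b_2)\circ_B b_3=(b_2\circ_B b_1)\circ_B b_3$. That identity lets us rewrite every $B$-monomial of degree three appearing in a given axiom as one common element. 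Once this is done, the axiom reduces to a Poisson identity for $p_1,p_2,p_3$.

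\textbf{Perm and Leibniz parts.}
\begin{itemize}
\item For $(P\otimes B,\circ)$ the $P$-parts are $p_1(p_2p_3)=(p_1p_2)p_3=(p_2p_1)p_3$, by associativity and commutativity. The $B$-parts are exactly the perm identities, so $(P\otimes B,\circ)$ is a perm algebra.
\item For the Leibniz identity we need
\[
[p_1,[p_2,p_3]]\otimes b_1\circ_B(b_2\circ_B b_3)=[[p_1,p_2],p_3]\otimes (b_1\circ_B b_2)\circ_B b_3+[p_2,[p_1,p_3]]\otimes b_2\circ_B(b_1\circ_B b_3).
\]
The three $B$-factors are all equal by the perm identities. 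In particular, $b_2\circ_B(b_1\circ_B b_3)=(b_2\circ_B b_1)\circ_B b_3=(b_1\circ_B b_2)\circ_B b_3$. The identity then follows from the Jacobi identity written in Leibniz form.
\end{itemize}

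\textbf{Compatibility conditions.}
\begin{itemize}
\item First condition: we need $[p_1p_2,p_3]=p_1[p_2,p_3]+p_2[p_1,p_3]$, with the common $B$-factor $(b_1\circ_Bb_2)\circ_Bb_3$. This is the Leibniz rule together with antisymmetry and commutativity.
\item Second condition: the terms $(x_1\ast x_2)\circ x_3$, $x_1\ast(x_2\circ x_3)$, $x_2\circ(x_1\ast x_3)$ and $(x_2\ast x_1)\circ x_3$ all carry the same $B$-factor, again by the perm identities. Their $P$-parts are $[p_1,p_2]p_3$, $[p_1,p_2p_3]$, $p_2[p_1,p_3]$ and $[p_2,p_1]p_3$.
\item The first equality, $[p_1,p_2]p_3=[p_1,p_2p_3]-p_2[p_1,p_3]$, is the Leibniz rule.
\item The second equality, $[p_1,p_2]p_3=-[p_2,p_1]p_3$, is antisymmetry.
\end{itemize}

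\textbf{Main obstacle.} The only real bookkeeping is confirming in each axiom that every $B$-monomial can be identified. This relies on the fact that in a perm algebra all three-fold products $b_i\circ_B(b_j\circ_B b_3)$ and $(b_i\circ_B b_j)\circ_B b_3$, with $\{i,j\}=\{1,2\}$, coincide when the last slot is fixed. Every axiom has $b_3$ in the last position, so this always applies, and the proof is complete.
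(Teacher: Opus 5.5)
Your proposal is correct and takes the same route as the paper. It is a direct check on pure tensors, where the perm identities make every $B$-factor in an axiom equal to $(b_1\circ_B b_2)\circ_B b_3$, and each axiom then reduces to associativity/commutativity, the Jacobi identity, the Leibniz rule, or antisymmetry in $P$. The paper writes out only the antisymmetry identity $(x_1\ast x_2)\circ x_3=-(x_2\ast x_1)\circ x_3$ and says the rest follow similarly, so your verification is actually more complete.
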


\begin{proof}
First, it is easy to see that $(P\otimes B, \circ)$ is a perm algebra and
$(P\otimes B, \ast)$ is a Leibniz algebra. Second, for any $p_{1}, p_{2}, p_{3}\in P$
and $b_{1}, b_{2}, b_{3}\in B$, we have
\begin{align*}
((p_{1}, b_{1})\ast(p_{2}, b_{2}))\circ(p_{3}, b_{3})
&=([p_{1}, p_{2}]p_{3})\otimes((b_{1}\circ_{B} b_{2})\circ_{B} b_{3})\\
&=-([p_{2}, p_{1}]p_{3})\otimes((b_{2}\circ_{B} b_{1})\circ_{B} b_{3})\\
&=-((p_{2}, b_{2})\ast(p_{1}, b_{1}))\circ(p_{3}, b_{3}).
\end{align*}
Similarly, we also have $(p_{1}, b_{1})\ast((p_{2}, b_{2})\circ(p_{3}, b_{3}))
=((p_{1}, b_{1})\ast(p_{2}, b_{2}))\circ(p_{3}, b_{3})+(p_{2}, b_{2})\circ((p_{1}, b_{1})
\ast(p_{3}, b_{3}))$ and $((p_{1}, b_{1})\circ(p_{2}, b_{2}))\ast(p_{3}, b_{3})
=(p_{1}, b_{1})\circ((p_{2}, b_{2})\ast(p_{3}, b_{3}))+(p_{2}, b_{2})\circ((p_{1}, b_{1})
\ast(p_{3}, b_{3}))$. Thus, $(P\otimes B, \circ, \ast)$ is a DPP algebra.
\end{proof}

In this paper, we not only study some special DPP bialgebras associated with
solutions of the classical Yang-Baxter equation, but also elevate the above relationship
between Poisson algebras and DPP algebras to the level of bialgebra.

\bigskip
\section{Quasi-triangular DPP bialgebras and classical Yang-Baxter equation}
\label{sec:bialg}
Recently, Lu have given a bialgebra theory of DPP algebras \cite{Lu}. Here we
further study some special DPP bialgebras associated with solutions of
the classical Yang-Baxter equation in a DPP algebra by introducing the
notions of quasi-triangular DPP bialgebras, triangular DPP bialgebras
and factorizable Poisson bidialgebras. First, we recall the definition of dual
pre-Poisson coalgebras. A {\bf perm coalgebra} $(A, \nu)$ is a vector space $A$
with a linear map $\nu: A\rightarrow A\otimes A$ satisfying
$$
(\nu\otimes\id)\nu=(\id\otimes\nu)\nu=(\tau\otimes\id)(\id\otimes\nu)\nu.
$$
One can check that $(A, \nu)$ is a perm coalgebra if and only if
$(A^{\ast}, \nu^{\ast})$ is a perm algebra. A {\bf Leibniz coalgebra} is a pair
$(A, \vartheta)$, where $B$ is a vector space and $\vartheta: B\rightarrow B \otimes B$
is a linear map satisfying
$$
(\vartheta\otimes\id)\vartheta=(\id\otimes\vartheta)\vartheta
+(\id\otimes\tau)(\vartheta\otimes\id)\vartheta.
$$

\begin{defi}\label{def:p-coalg}
A {\bf dual pre-Poisson coalgebra} (DPP coalgebra) is a triple $(A, \nu, \vartheta)$,
where $(A, \nu)$ is a perm coalgebra, $(A, \vartheta)$ is a Leibniz coalgebra and
the following compatibility conditions hold:
\begin{align*}
(\id\otimes\nu)\vartheta&=(\vartheta\otimes\id)\nu+(\tau\otimes\id)(\id\otimes\vartheta)\nu,\\
(\nu\otimes\id)\vartheta&=(\id\otimes\vartheta)\nu+(\tau\otimes\id)(\id\otimes\vartheta)\nu,\\
(\vartheta\otimes\id)\nu&=-(\tau\otimes\id)(\vartheta\otimes\id)\nu.
\end{align*}
\end{defi}

By direct calculation, one can show that $(A, \nu, \vartheta)$ is a DPP coalgebra
if and only $(A^{\ast}, \nu^{\ast}, \vartheta^{\ast})$ is a DPP algebra.
Now we give the definition of DPP bialgebras. Recall that a {\bf perm
bialgebra} is a triple $(A, \circ, \nu)$, where $(A, \circ)$ is a perm algebra,
$(A, \nu)$ is a perm coalgebra and for any $a_{1}, a_{2}\in A$,
\begin{align*}
&\qquad\qquad (\fr_{A}(a_{1})\otimes\id)\nu(a_{2})
=\tau((\fr_{A}(a_{2})\otimes\id)\nu(a_{1})),   \\
&\qquad \nu(a_{1}\circ a_{2})=((\fl_{A}-\fr_{A})(a_{1})\otimes\id)\nu(a_{2})
+(\id\otimes\,\fr_{A}(a_{2}))\nu(a_{1}),  \\
&\nu(a_{1}\circ a_{2})=(\id\otimes\,\fl_{A}(a_{1}))\nu(a_{2})
+((\fl_{A}-\fr_{A})(a_{2})\otimes\id)(\nu(a_{1})-\tau(\nu(a_{1})).
\end{align*}
A {\bf Leibniz bialgebra} is a triple $(A, \ast, \vartheta)$, where $(A, \ast)$ is a
Leibniz algebra, $(A, \vartheta)$ is a Leibniz coalgebra and the following equations hold:
\begin{align*}
&\qquad\qquad\quad \tau((\ffr_{A}(a_{2})\otimes\id)(\vartheta(a_{1})))
=(\ffr_{A}(a_{1})\otimes\id)(\vartheta(a_{2})),\\
&\vartheta(a_{1}\ast a_{2})=(\id\otimes\ffr_{A}(a_{2})-(\ffl_{A}+\ffr_{A})(a_{2})\otimes\id)
((\id\otimes\id+\tau)(\vartheta(a_{1})))\\[-1mm]
&\qquad\qquad\qquad\qquad\qquad\qquad\qquad +(\id\otimes\ffl_{A}(a_{1})
+\ffl_{A}(a_{1})\otimes\id)(\vartheta(a_{2})),
\end{align*}
for any $a_{1}, a_{2}\in A$.

\begin{defi}[\cite{Lu}]\label{def:bi-dia}
A {\bf dual pre-Poisson bialgebra (DPP bialgebra)} is a quituple $(A, \circ, \ast,
\nu, \vartheta)$, where $(A, \circ, \ast)$ is a DPP algebra, $(A, \nu, \vartheta)$
is a DPP coalgebra, $(A, \circ, \nu)$ is a perm bialgebra, $(A, \ast, \vartheta)$
is a Leibniz bialgebra and the following compatibility conditions hold:
{\small\begin{align}
&\nu(a_{1}\ast a_{2})=(\id\otimes\ffl_{A}(a_{1})+\ffl_{A}(a_{1})\otimes\id)(\nu(a_{2}))
+((\fl_{A}-\fr_{A})(a_{2})\otimes\id-\id\otimes\fr_{A}(a_{2}))
((\vartheta+\tau(\vartheta))(a_{1})),    \label{dpbi1} \\
&\qquad\vartheta(a_{1}\circ a_{2})=(\id\otimes\fl_{A}(a_{1}))(\vartheta(a_{2}))
+(\id\otimes\fr_{A}(a_{2}))(\vartheta(a_{1}))-((\ffl_{A}+\ffr_{A})(a_{1})\otimes\id)(\nu(a_{2}))
\label{dpbi2} \\[-1mm]
&\qquad\qquad\qquad\qquad-((\ffl_{A}+\ffr_{A})(a_{2})\otimes\id )((\nu-\tau(\nu))(a_{1})),
\nonumber\\
&\qquad\qquad\qquad\qquad(\id\otimes\ffr_{A}(a_{1}))(\tau(\nu(a_{2})))
=-(\fr_{A}(a_{2})\otimes\id)(\vartheta(a_{1})),
\label{dpbi3}
\end{align}
\begin{align}
&\nu(a_{1}\ast a_{2})=(\id\otimes\fl_{A}(a_{1})-\fl_{A}(a_{1})\otimes\id)(\vartheta(a_{2}))
+(\id\otimes\ffr_{A}(a_{2})-(\ffl_{A}+\ffr_{A})(a_{2})\otimes\id)((\nu-\tau(\nu))(a_{1})),
\label{dpbi4} \\
&\qquad (\vartheta+\tau(\vartheta))(a_{1}\circ a_{2})=(\id\otimes\fl_{A}(a_{1}))((\vartheta
+\tau(\vartheta))(a_{2}))+(\id\otimes\fl_{A}(a_{2}))((\vartheta+\tau(\vartheta))(a_{1}))
\label{dpbi5} \\[-1mm]
&\qquad\qquad\qquad\qquad\qquad\qquad-(\ffl_{A}(a_{1})\otimes\id)((\nu-\tau(\nu))(a_{2}))
-(\ffl_{A}(a_{2})\otimes\id)((\nu-\tau(\nu))(a_{1})), \nonumber\\
&\qquad\qquad\vartheta(a_{1}\circ a_{2})=(\fl_{A}(a_{1})\otimes\id)(\vartheta(a_{2}))
-(\id\otimes\ffr_{A}(a_{2}))((\nu-\tau(\nu))(a_{1}))
\label{dpbi6} \\[-1mm]
&\qquad\qquad\qquad\qquad\qquad+(\id\otimes\ffl_{A}(a_{1}))(\nu(a_{2}))
+((\fl_{A}-\fr_{A})(a_{2})\otimes\id)((\vartheta+\tau(\vartheta))(a_{1})), \nonumber\\
&\qquad\nu(a_{1}\ast a_{2}+a_{2}\ast a_{1})=(\id\otimes(\ffl_{A}+\ffr_{A})(a_{1}))(\nu(a_{2}))
+((\ffl_{A}+\ffr_{A})(a_{1})\otimes\id)(\tau(\nu(a_{2})))
\label{dpbi7} \\[-1mm]
&\qquad\qquad\qquad\qquad\qquad\qquad+(\id\otimes(\fl_{A}-\fr_{A})(a_{2}))(\vartheta(a_{1}))
+((\fl_{A}-\fr_{A})(a_{2})\otimes\id)(\tau(\vartheta(a_{1}))). \nonumber
\end{align}}
\end{defi}

In \cite{Lu}, Lu have introduced the notions of matched pairs and Manin triples of DPP
algebras, and the equivalence between matched pairs of DPP algebras, Manin triples of
DPP algebras and DPP bialgebras is interpreted. Here we main study some
special DPP bialgebras. Let $(A, \ast)$ be a Leibniz algebra. We define a linear map
$F: A\rightarrow\gl(A\otimes A)$ by
$$
F(a):=(\ffl_{A}+\ffr_{A})(a)\otimes\id)-\id\otimes\ffr_{A}(a).
$$
An element $r\in A\otimes A$ is called {\bf Leib-invariant} if $F(a)(r)=0$ for all $a\in A$.
If there exists an element $r\in A\otimes A$ such that $(A, \ast, \vartheta_{r})$ is a
Leibniz bialgebra, where $\vartheta_{r}: A\rightarrow A\otimes A$ is given by
\begin{align}
\vartheta_{r}(a)=F(a)(r),    \label{cobLeb}
\end{align}
for any $a\in A$, then $(A, \ast, \vartheta_{r})$ is called a {\bf coboundary
Leibniz bialgebra} associated with $r$. Let $(A, \ast)$ be a Leibniz algebra and
$r=\sum_{i}x_{i}\otimes y_{i}\in A\otimes A$. The equation
$$
\mathbf{L}_{r}=r_{12}\ast r_{13}+r_{12}\ast r_{23}-r_{23}\ast r_{12}+r_{23}\ast r_{13}=0
$$
is called the (classical) {\bf Leibniz Yang-Baxter equation} ($\LYBE$) in the Leibniz
algebra $(A, \ast)$, where $r_{12}\ast r_{13}=\sum_{i,j}(x_{i}\ast x_{j})\otimes y_{i}
\otimes y_{j}$, $r_{12}\ast r_{23}=\sum_{i,j}x_{i}\otimes(y_{i}\ast x_{j})\otimes y_{j}$,
$r_{23}\ast r_{12}=\sum_{i,j}x_{j}\otimes(x_{i}\ast y_{j})\otimes y_{i}$ and
$r_{23}\ast r_{13}=\sum_{i,j}x_{j}\otimes x_{i}\otimes(y_{i}\ast y_{j})$.
Let $(A, \circ)$ be a perm algebra and $r=\sum_{i}x_{i}\otimes y_{i}\in A\otimes A$.
Then
$$
\mathbf{P}_{r}:=r_{12}\circ r_{23}-r_{13}\circ r_{23}+r_{12}\circ r_{13}-r_{13}\circ r_{12}=0
$$
is called the (classical) {\bf perm Yang-Baxter equation} ($\PYBE$) in $(A, \circ)$,
where $r_{12}\circ r_{23}:=\sum_{i,j}x_{i}\otimes(y_{i}\circ x_{j})\otimes y_{j}$,
$r_{13}\circ r_{23}:=\sum_{i,j}x_{i}\otimes x_{j}\otimes(y_{i}\circ y_{j})$,
$r_{12}\circ r_{13}:=\sum_{i,j}(x_{i}\circ x_{j})\otimes y_{i}\otimes y_{j}$,
$r_{13}\circ r_{12}:=\sum_{i,j}(x_{i}\circ x_{j})\otimes y_{j}\otimes y_{i}$.
An element $r\in A\otimes A$ is called {\bf perm-invariant} if
$$
G(a):=(\id\otimes\fr_{A}(a)+(\fr_{A}-\fl_{A})(a)\otimes\id)(r)=0
$$
for any $a\in A$. If there exists an element $r\in A\otimes A$ such that
$(A, \circ, \nu_{r})$ a perm bialgebra, where
\begin{align}
\nu_{r}(a)=G(a)(r),    \label{cobperm}
\end{align}
for any $a\in A$, then the perm bialgebra $(A, \circ, \nu_{r})$ is called
a {\bf coboundary perm bialgebra} associated with $r$.

Let $(A, \circ, \ast)$ be a DPP algebra. The equations $\mathbf{P}_{r}=\mathbf{L}_{r}=0$
is called the (classical) {\bf dual pre-Poisson Yang-Baxter equation} ($\DPYBE$) in
$(A, \circ, \ast)$. An element $r\in A\otimes A$ is called {\bf DP-invariant} if $F(a)(r)=
G(a)(r)=0$; is called {\bf symmetric} (resp. {\bf skew-symmetric}) if $r=\tau(r)$
(resp. $r=-\tau(r)$).

\begin{pro}[\cite{Lu}]\label{pro:sLib-bia}
Let $(A, \circ, \ast)$ be a DPP algebra, $r\in A\otimes A$, $\vartheta_{r},
\nu_{r}: A\rightarrow A\otimes A$ are given by Eqs. \eqref{cobLeb} and \eqref{cobperm}
respectively.
\begin{enumerate}
\item[$(i)$] If $r$ is a solution of the $\DPYBE$ in $(A, \circ, \ast)$ and
     $r-\tau(r)$ is DP-invariant, then $(A, \circ, \ast, \nu_{r}, \vartheta_{r})$
     is a DPP bialgebra, which is called a {\bf quasi-triangular DPP} associated with $r$.
\item[$(ii)$] If $r$ is a symmetric solution of the $\DPYBE$ in $(A, \circ,
     \ast)$, then $(A, \circ, \ast, \nu_{r}, \vartheta_{r})$ is a DPP bialgebra,
     which is called a {\bf triangular DPP bialgebra} associated with $r$.
\end{enumerate}
\end{pro}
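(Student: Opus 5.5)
The approach is to check directly the axioms of Definition \ref{def:bi-dia} for $(A, \circ, \ast, \nu_{r}, \vartheta_{r})$, organizing the computation so that each axiom splits into a term controlled by the Yang--Baxter expressions $\mathbf{P}_{r}$, $\mathbf{L}_{r}$ and a term controlled by the invariance of $r-\tau(r)$. The conditions fall into three groups: (a) the coalgebra axioms for $\nu_{r}$ and $\vartheta_{r}$ separately, together with the DPP coalgebra compatibilities of Definition \ref{def:p-coalg}; (b) the perm bialgebra and Leibniz bialgebra compatibilities; (c) the mixed compatibilities \eqref{dpbi1}--\eqref{dpbi7}.

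For (b) and for the separate coassociativity-type conditions in (a), I would rely on the known coboundary theories. For perm bialgebras, $\nu_{r}$ gives a perm bialgebra whenever $\mathbf{P}_{r}=0$ and $r-\tau(r)$ is perm-invariant. For Leibniz bialgebras, $\vartheta_{r}$ gives a Leibniz bialgebra whenever $\mathbf{L}_{r}=0$ and $r-\tau(r)$ is Leib-invariant. I would redo these two computations only in outline. For instance, $(\nu_{r}\otimes\id)\nu_{r}(a)-(\id\otimes\nu_{r})\nu_{r}(a)$ expands, after using the perm identities, into operators of the form $G$-type actions of $a$ on $\mathbf{P}_{r}$, plus terms involving $G(x)(r-\tau(r))$. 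Both kinds of terms vanish by hypothesis.

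For the mixed conditions, I would expand both sides of each of \eqref{dpbi1}--\eqref{dpbi7} using $\nu_{r}(a)=G(a)(r)$ and $\vartheta_{r}(a)=F(a)(r)$. The DPP axioms of Definition \ref{def:dip-alg}, together with the derived identities stated right after it, then rewrite the difference as a sum of terms $(\beta(a_{1})\otimes\beta'(a_{2}))(r-\tau(r))$ or $(\beta(a)\otimes\id)(r\pm\tau(r))$ with $\beta\in\{\fl_{A},\fr_{A},\ffl_{A},\ffr_{A}\}$. I expect the remainder to reorganize as $F(\cdot)$ or $G(\cdot)$ applied to $r-\tau(r)$. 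The DPP coalgebra compatibilities in (a) similarly reduce to cubic terms. These are combinations of $\mathbf{P}_{r}$ and $\mathbf{L}_{r}$, and here the mixed identities such as $(a_{1}\circ a_{2})\ast a_{3}=a_{1}\circ(a_{2}\ast a_{3})+a_{2}\circ(a_{1}\ast a_{3})$ are needed to match $\circ$-terms with $\ast$-terms. The remaining terms are quadratic expressions killed by DP-invariance. Alternatively, one could prove (a) and (b) at once by passing to the dual. In that picture $(A^{\ast},\nu_{r}^{\ast},\vartheta_{r}^{\ast})$ is the DPP algebra structure induced by $r^{\sharp}:A^{\ast}\to A$ through the coregular representation. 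The Yang--Baxter equations then say that $r^{\sharp}$ intertwines the products up to the invariant part.

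Part $(ii)$ follows from $(i)$: if $r=\tau(r)$, then $r-\tau(r)=0$ is trivially DP-invariant.

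The main obstacle is the bookkeeping in (c). There are seven long identities, and each one requires recognizing exactly which combination of $\mathbf{P}_{r}$, $\mathbf{L}_{r}$, $F(\cdot)(r-\tau(r))$ and $G(\cdot)(r-\tau(r))$ appears. I would start with \eqref{dpbi3}, which is the shortest, to calibrate sign conventions. Next I would treat \eqref{dpbi1} and \eqref{dpbi4}, which share the left-hand side $\nu(a_{1}\ast a_{2})$; subtracting them should yield a pure invariance condition, so only one of the two needs a full expansion. Finally I would handle \eqref{dpbi2}, \eqref{dpbi5} and \eqref{dpbi6} in the same way, using their common left-hand side $\vartheta(a_{1}\circ a_{2})$.
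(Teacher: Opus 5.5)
The paper gives no proof of this proposition. It is quoted from \cite{Lu} and then used as a black box, for instance in Proposition~\ref{pro:fact-doub}. Your route is sound and would make the statement self-contained: take the pure perm and pure Leibniz parts from the coboundary theories of \cite{Lin} and \cite{BLST}, check the mixed axioms by hand, and obtain $(ii)$ as the case $r-\tau(r)=0$ of $(i)$. This goes beyond the component-splitting the paper uses for neighbouring results (Proposition~\ref{pro:fact-doub}, Theorem~\ref{thm:fpb-qrPd}). That splitting alone cannot give a DPP bialgebra. You correctly place the extra content in the three mixed identities of Definition~\ref{def:p-coalg} and in \eqref{dpbi1}--\eqref{dpbi7}.

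One part of your bookkeeping is misdiagnosed, and you should fix it before computing.

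\emph{The Yang--Baxter equations play no role in \eqref{dpbi1}--\eqref{dpbi7}.} These seven identities, like the perm and Leibniz compatibilities in (b), are linear in $r$, because $\nu_r$ and $\vartheta_r$ are. So $\mathbf{P}_r$ and $\mathbf{L}_r$ cannot occur in them, and the $\DPYBE$ plays no role there. What you must show is that each identity holds for every $r$ whose skew part $r-\tau(r)$ is DP-invariant. Two things have to happen:
\begin{itemize}
\item the contribution of the symmetric part of $r$ must cancel identically by the DPP axioms alone; a leftover term such as $(\beta(a)\otimes\id)(r+\tau(r))$ would be fatal, since nothing in the hypotheses controls it linearly;
\item the skew part must be killed by $F(a)(r-\tau(r))=G(a)(r-\tau(r))=0$.
\end{itemize}
Your calibration case \eqref{dpbi3} shows this cleanly. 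Use $(a_1\circ a_2)\ast a_3=(a_2\circ a_1)\ast a_3$ and $(a_1\ast a_2)\circ a_3=-(a_2\ast a_1)\circ a_3$. Then \eqref{dpbi3} for $\nu_r,\vartheta_r$ reduces exactly to $(\fr_A(a_2)\otimes\ffr_A(a_1))(r-\tau(r))=0$. This follows from $(\id\otimes\ffr_A(a_1))(r-\tau(r))=((\ffl_A+\ffr_A)(a_1)\otimes\id)(r-\tau(r))$ together with $\fr_A(a_2)(\ffl_A+\ffr_A)(a_1)=0$. No cubic terms appear; the Yang--Baxter equations enter only in (a).

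\emph{The dual picture covers (a), not (b).} For symmetric $r$ one computes
$\nu_r^\ast(\xi\otimes\eta)=-\fl_A^\ast(r^\sharp(\xi))(\eta)+(\fr_A^\ast-\fl_A^\ast)(r^\sharp(\eta))(\xi)$ and
$\vartheta_r^\ast(\xi\otimes\eta)=\ffl_A^\ast(r^\sharp(\xi))(\eta)-(\ffl_A^\ast+\ffr_A^\ast)(r^\sharp(\eta))(\xi)$.
These are the products induced by the $\mathcal{O}$-operator $r^\sharp$ of Proposition~\ref{pro:o-dp}. Since the graph of an $\mathcal{O}$-operator is a subalgebra of $A\ltimes A^\ast$, this gives all of (a) at once in the triangular case. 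The linear compatibilities in (b) and \eqref{dpbi1}--\eqref{dpbi7} still need the direct check.
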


Let $(A, \circ, \ast)$ be a DPP algebra. For any $r\in A\otimes A$,
we define a linear map $r^{\sharp}: A^{\ast}\rightarrow A$ by
$$
\langle r^{\sharp}(\xi_{1}),\; \xi_{2}\rangle=\langle\xi_{1}\otimes\xi_{2},\; r\rangle,
$$
for any $\xi_{1}, \xi_{2}\in A^{\ast}$ and denote $\mathcal{I}=r^{\sharp}-\tau(r)^{\sharp}: A^{\ast}\rightarrow A$.

\begin{defi}\label{def:fact-Leib}
Let $(A, \circ, \ast)$ be a DPP algebra, $r\in A\otimes A$ and $(A, \circ, \ast,
\nu_{r}, \vartheta_{r})$ be a quasi-triangular DPP bialgebra associated with $r$.
If $\mathcal{I}=r^{\sharp}-\tau(r)^{\sharp}: A^{\ast}\rightarrow A$ is an isomorphism
of vector spaces, then $(A, \circ, \ast, \nu_{r}, \vartheta_{r})$ is called a
{\bf factorizable DPP bialgebra}.
\end{defi}

By the definitions of factorizable perm bialgebra given in \cite{Lin} and factorizable
Leibniz bialgebra given in \cite{BLST}, It is easy to see that a quasi-triangular
DPP bialgebra $(A, \circ, \ast, \nu, \vartheta)$ is factorizable if and only
if $(A, \circ, \nu)$ as a perm bialgebra is factorizable or
$(A, \ast, \vartheta)$ as a Leibniz bialgebra is factorizable.
The factorizable DPP bialgebras exist in large quantities, and we can
construct a factorizable DPP bialgebra from any DPP bialgebra.
Let $(A, \circ, \ast, \nu, \vartheta)$ be a DPP bialgebra,
$\tilde{r}\in A\otimes A^{\ast}\subset (A\oplus A^{\ast})\otimes(A\oplus A^{\ast})$
corresponds to the identity map $\id: A\rightarrow A$. That is, $\tilde{r}=\sum_{i=1}^{n}
e_{i}\otimes f_{i}$, where $\{e_{1}, e_{2}, \cdots, e_{n}\}$ is a basis of $A$ and
$\{f_{1}, f_{2}, \cdots, f_{n}\}$ be its dual basis in $A^{\ast}$. Denote the dual
DPP algebra structure on $A^{\ast}$ of $(\nu, \vartheta)$ by $(\circ', \ast')$.
Suppose that $\nu_{\tilde{r}}, \vartheta_{\tilde{r}}: A\oplus A^{\ast}\rightarrow
(A\oplus A^{\ast})\otimes(A\oplus A^{\ast})$ are the linear maps defined by Eqs.
\eqref{cobperm} and \eqref{cobLeb} respectively. Then we obtain a factorizable
perm bialgebra $(A\oplus A^{\ast}, \tilde{\circ}, \nu_{\tilde{r}})$, where
$$
(a_{1}, \xi_{1})\tilde{\circ}(a_{2}, \xi_{2})
=\big(a_{1}\circ a_{2}+\fl_{A^{\ast}}^{\ast}(\xi_{1})(a_{2})
+\fr_{A^{\ast}}^{\ast}(\xi_{2})(a_{1}),\ \ \xi_{1}\circ'\xi_{2}
+\fl_{A}^{\ast}(a_{1})(\xi_{2})+\fr_{A}^{\ast}(a_{2})(\xi_{1})\big),
$$
for any $a_{1}, a_{2}\in A$ and $\xi_{1}, \xi_{2}\in A^{\ast}$
(see \cite[Theorem 3.6]{Lin}),
and a factorizable Leibniz bialgebra $(A\oplus A^{\ast}, \tilde{\ast},
\vartheta_{\tilde{r}})$, where
$$
(a_{1}, \xi_{1})\tilde{\ast}(a_{2}, \xi_{2})
=\big(a_{1}\ast a_{2}+\ffl_{A^{\ast}}^{\ast}(\xi_{1})(a_{2})
+\ffr_{A^{\ast}}^{\ast}(\xi_{2})(a_{1}),\ \ \xi_{1}\ast'\xi_{2}
+\ffl_{A}^{\ast}(a_{1})(\xi_{2})+\ffr_{A}^{\ast}(a_{2})(\xi_{1})\big),
$$
for any $a_{1}, a_{2}\in A$ and $\xi_{1}, \xi_{2}\in A^{\ast}$ (see
\cite[Theorem 3.6]{BLST}).

\begin{pro}\label{pro:fact-doub}
Let $(A, \circ, \ast, \nu, \vartheta)$ be a DPP bialgebra, with the above notations,
$(A\oplus A^{\ast}, \tilde{\circ}, \tilde{\ast}$, $\nu_{\tilde{r}}, \vartheta_{\tilde{r}})$
is a factorizable DPP bialgebra.
\end{pro}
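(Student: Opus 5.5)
The proof reduces the statement to the perm and Leibniz halves already handled in \cite{Lin} and \cite{BLST}. Only the genuinely mixed pieces need checking: that the double is a DPP algebra, and the cross conditions \eqref{dpbi1}--\eqref{dpbi7}.

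\emph{Step 1: the double is a DPP algebra.} Since $(A, \circ, \ast, \nu, \vartheta)$ is a DPP bialgebra, Lu's equivalence between DPP bialgebras, matched pairs and Manin triples \cite{Lu} gives a DPP algebra structure $(\tilde{\circ}, \tilde{\ast})$ on $A\oplus A^{\ast}$. This is the Manin triple with the natural skew-symmetric pairing
$$\omega((a_{1},\xi_{1}),(a_{2},\xi_{2}))=\langle\xi_{1},a_{2}\rangle-\langle\xi_{2},a_{1}\rangle.$$
Its two products are exactly the perm product of \cite[Theorem 3.6]{Lin} and the Leibniz product of \cite[Theorem 3.6]{BLST}. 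So $(A\oplus A^{\ast}, \tilde{\circ}, \tilde{\ast})$ is a DPP algebra, and it is quadratic with respect to $\omega$.

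\emph{Step 2: $\tilde{r}$ solves the $\DPYBE$ and $\tilde{r}-\tau(\tilde{r})$ is DP-invariant.} By \cite[Theorem 3.6]{Lin}, $\tilde{r}$ satisfies $\mathbf{P}_{\tilde{r}}=0$ in $(A\oplus A^{\ast}, \tilde{\circ})$, and $\tilde{r}-\tau(\tilde{r})$ is perm-invariant, i.e. $G(x)(\tilde{r}-\tau(\tilde{r}))=0$ for all $x$. By \cite[Theorem 3.6]{BLST}, $\mathbf{L}_{\tilde{r}}=0$ in $(A\oplus A^{\ast}, \tilde{\ast})$, and $F(x)(\tilde{r}-\tau(\tilde{r}))=0$ for all $x$.

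The point to verify is that these cited results use the same conventions as here. That is, the operators $F$ and $G$ and the equations $\mathbf{P}_{r}$, $\mathbf{L}_{r}$ in those papers must agree with the ones defined above, and the doubles there must be the products written before the proposition. If a convention differs (for instance left versus right Leibniz, or a sign in $G$), I would instead verify directly. The direct check goes as follows: $\tilde{r}-\tau(\tilde{r})$ corresponds under $\omega$ to (a multiple of) the identity, so its invariance is equivalent to the invariance of $\omega$. For the Yang--Baxter equations, expand $\mathbf{P}_{\tilde{r}}$ and $\mathbf{L}_{\tilde{r}}$ in the dual bases $\{e_i\}$, $\{f_i\}$ and pair against triples from $A$ and $A^{\ast}$. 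Each component then reduces to the definition of a coregular action $\fl^{\ast}_{A},\ffl^{\ast}_{A},\dots$, or to the defining relation between $\circ'$, $\ast'$ and $\nu$, $\vartheta$. All terms cancel pairwise.

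\emph{Step 3: conclude.} Steps 1--2 let us apply Proposition~\ref{pro:sLib-bia}(i), so $(A\oplus A^{\ast}, \tilde{\circ}, \tilde{\ast}, \nu_{\tilde{r}}, \vartheta_{\tilde{r}})$ is a quasi-triangular DPP bialgebra. Next compute $\mathcal{I}=\tilde{r}^{\sharp}-\tau(\tilde{r})^{\sharp}\colon A^{\ast}\oplus A\to A\oplus A^{\ast}$. Pairing shows $\tilde{r}^{\sharp}(\eta, a)=\eta$ and $\tau(\tilde{r})^{\sharp}(\eta, a)=a$, viewing $\eta\in A^{\ast}$ and $a\in A$ as elements of $A\oplus A^{\ast}$ via $(A\oplus A^{\ast})^{\ast}\cong A^{\ast}\oplus A$. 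Hence $\mathcal{I}(\eta,a)=(-a,\eta)$ up to this identification, which is clearly a linear isomorphism. Therefore the bialgebra is factorizable by Definition~\ref{def:fact-Leib}. Alternatively, the remark after that definition says factorizability of either constituent suffices, and both constituents are factorizable by the cited theorems.

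The main obstacle is Step 2, namely the compatibility of sign and side conventions with \cite{Lin} and \cite{BLST}. If those conventions do not match, one must do the mixed-component expansion of $\mathbf{L}_{\tilde{r}}$ directly, keeping track of which terms involve $\ffl^{\ast}$ versus $\ffr^{\ast}$.
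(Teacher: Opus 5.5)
Your proposal is correct and follows essentially the same route as the paper. Both arguments obtain $\mathbf{P}_{\tilde r}=\mathbf{L}_{\tilde r}=0$ and the perm- and Leib-invariance of $\tilde r-\tau(\tilde r)$ from known results on the double (the paper cites \cite{Bai}, \cite{SW}, \cite{CH} where you cite \cite{Lin}, \cite{BLST}), apply Proposition~\ref{pro:sLib-bia}(i), and get invertibility of $\mathcal{I}$ from factorizability of the perm half; your explicit formula $\mathcal{I}(\eta,a)=(-a,\eta)$ and your appeal to Lu's Manin triple correspondence for the DPP structure on the double just make explicit what the paper leaves implicit.
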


\begin{proof}
We first show that $(A\oplus A^{\ast}, \tilde{\circ}, \tilde{\ast}, \nu_{\tilde{r}},
\vartheta_{\tilde{r}})$ is a DPP bialgebra. Indeed, by \cite[Theorem 2.36]{Bai},
we get $\tilde{r}$ is a solution of $\mathbf{P}_{r}=0$ in $(A\oplus A^{\ast}, \tilde{\circ})$,
and by \cite[Theorem 3.3]{SW}, $\tilde{r}-\tau(\tilde{r})$ is perm-invariant. Moreover,
$\tilde{r}$ is a solution of $\mathbf{L}_{r}=0$ in $(A\oplus A^{\ast}, \tilde{\ast})$ and
$\tilde{r}-\tau(\tilde{r})$ is Leib-invariant (see \cite[Proposition 2.10]{CH}).
Thus, $\tilde{r}$ is a solution of the $\DPYBE$ in $(A\oplus A^{\ast},
\tilde{\circ}, \tilde{\ast})$ and $\tilde{r}-\tau(\tilde{r})$ is DP-invariant.
This means $(A\oplus A^{\ast}, \tilde{\circ}, \tilde{\ast}, \nu_{\tilde{r}},
\vartheta_{\tilde{r}})$ is a quasi-triangular DPP bialgebra.
Finally, since $(A\oplus A^{\ast}, \tilde{\circ}, \nu_{\tilde{r}})$ is a
factorizable perm bialgebra, We get $\mathcal{I}$ is a linear isomorphism.
Thus, $(A\oplus A^{\ast}, \tilde{\circ}, \tilde{\ast}, \nu_{\tilde{r}},
\vartheta_{\tilde{r}})$ is a factorizable DPP bialgebra.
\end{proof}

In \cite{Lin} (resp. \cite{BLST}) the authors have shown that there is a closed relation
between factorizable perm bialgebras (resp. factorizable Leibniz bialgebras) and quadratic
Rota-Baxter perm algebras (resp. quadratic Rota-Baxter Leibniz algebras)
Let $(A, \circ, \ast)$ be a DPP algebra. Recall that a linear map
$R: A\rightarrow A$ is called a {\bf Rota-Baxter operator of
weight $\lambda$} on $(A, \circ, \ast)$ if
\begin{align*}
R(a_{1})\circ R(a_{2})&=R\big(R(a_{1})\circ a_{2}+a_{1}\circ R(a_{2})
+\lambda a_{1}\circ a_{2}\big),\\
R(a_{1})\ast R(a_{2})&=R\big(R(a_{1})\ast a_{2}+a_{1}\ast R(a_{2})
+\lambda a_{1}\ast a_{2}\big),
\end{align*}
for any $a_{1}, a_{2}\in A$. A {\bf Rota-Baxter DPP algebra $(A, \circ, \ast, R)$
of weight $\lambda$} is a DPP algebra $(A, \circ, \ast)$ equipped with a
Rota-Baxter operator $R$ of weight $\lambda$.
For any Rota-Baxter DPP algebra $(A, \circ, \ast, R)$ of weight $\lambda$,
we can obtain a new DPP algebra $(A, \circ_{R}, \ast_{R})$, which is called
the {\bf descendent DPP algebra} of $(A, \circ, \ast, R)$, where
$\circ_{R}, \ast_{R}: A\otimes A\rightarrow A$ are respectively defined by
\begin{align*}
a_{1}\circ_{R}a_{2}&=R(a_{1})\circ a_{2}+a_{1}\circ R(a_{2})+\lambda a_{1}\circ a_{2},\\
a_{1}\ast_{R}a_{2}&=R(a_{1})\ast a_{2}+a_{1}\ast R(a_{2})+\lambda a_{1}\ast a_{2},
\end{align*}
for any $a_{1}, a_{2}\in A$. Furthermore, $R$ is a homomorphism of DPP algebras from
$(A, \circ_{R}, \ast_{R})$ to $(A, \circ, \ast)$. Equipping quadratic DPP algebra
with Rota-Baxter operators satisfying compatibility conditions, we introduce the notion
of quadratic Rota-Baxter DPP algebras.

\begin{defi}\label{def:qua-RB}
The quintuple $(A, \circ, \ast, \omega, R)$ is called a {\bf quadratic Rota-Baxter
DPP algebra of weight $\lambda$} if $(A, \circ, \ast, \omega)$ is a quadratic
DPP algebra, $(A, \circ, \ast, R)$ is a Rota-Baxter DPP algebra of weight
$\lambda$ and satisfying the following compatibility condition: for any $a_{1}, a_{2}\in A$,
\begin{align}
\omega(a_{1}, R(a_{2}))+\omega(R(a_{1}), a_{2})+\lambda\omega(a_{1}, a_{2})=0.\label{qua}
\end{align}
\end{defi}

Let $(A, \circ, \ast)$ be a DPP algebra. It is easy to see that $(A, \circ, \ast,
\omega, R)$ is a quadratic Rota-Baxter DPP algebra of weight $\lambda$ if and
only if $(A, \circ, \omega, R)$ is a quadratic Rota-Baxter perm algebra of weight $\lambda$
and $(A, \ast, \omega, R)$ is a quadratic Rota-Baxter Leibniz algebra of weight $\lambda$.
For the details of quadratic Rota-Baxter perm algebras and quadratic Rota-Baxter Leibniz
algebra see \cite{BLST} and \cite{Lin}. Next, we shoe that there is a close relationship
between the quadratic Rota-Baxter DPP algebras and the quasi-triangular Poisson
bi-dialgebras. For convenience, let us first provide some basic facts about bilinear forms.
Let $A$ be a vector space with a nondegenerate bilinear form $\omega(-,-)$.
The linear isomorphism $J_{\omega}: A^{\ast}\rightarrow A$ is defined by
$$
\langle J_{\omega}^{-1}(a_{1}),\; a_{2}\rangle=\omega(a_{1}, a_{2}),
$$
for any $a_{1}, a_{2}\in A$, which is called the {\bf the linear isomorphism
induced by $\omega(-,-)$}. Moreover, denote by $r_{\omega}\in A\otimes A$ by
$$
\langle r_{\omega},\; \xi_{1}\otimes\xi_{2}\rangle
=\langle J_{\omega}(\xi_{1}),\; \xi_{2}\rangle,
$$
for any $\xi_{1}, \xi_{2}\in A^{\ast}$, which is called the {\bf the 2-tensor form
of $J_{\omega}$}.
Let $(A, \circ, \ast)$ be a DPP algebra and $\omega(-,-)$ be a nondegenerate
bilinear form on $A$. Then $(A, \circ, \omega)$ is a quadratic perm algebra if and
only if $r_{\omega}\in A\otimes A$ defined above is skew-symmetric and perm-invariant
\cite{Lin}, and $(A, \ast, \omega)$ is a quadratic Leibniz algebra if and
only if $r_{\omega}$ is skew-symmetric and Leib-invariant \cite{BLST}.
Thus, we get $(A, \circ, \ast, \omega)$ is a quadratic DPP algebra if and
only if $r_{\omega}$ is skew-symmetric and DP-invariant.

First, we consider the relationship between triangular DPP bialgebras
and quadratic Rota-Baxter DPP algebra of weight 0. Let $(A, \circ, \ast, \omega, R)$
be a quadratic Rota-Baxter DPP algebra of weight 0. Define $r_{\omega}\in A\otimes A$
by $\langle r_{\omega},\; \xi_{1}\otimes\xi_{2}\rangle=\langle R(J_{\omega}(\xi_{1})),\;
\xi_{2}\rangle$ for any $\xi_{1}, \xi_{2}\in A^{\ast}$. By \cite[Proposition 5.5]{Lin},
we get $r_{\omega}$ is a symmetric solution of $\mathbf{L}_{r}=0$, and by
\cite[Corollary 4.18]{BLST}, we get $r_{\omega}$ is a symmetric solution of
$\mathbf{P}_{r}=0$. In this case, $r_{\omega}$ is a symmetric solution of the
$\DPYBE$ in $(A, \circ, \ast)$. Thus, we have

\begin{pro} \label{pro:qua-tri}
Let $(A, \circ, \ast, \omega, R)$ be a quadratic Rota-Baxter DPP algebra of
weight $0$ and and $J_{\omega}: A^{\ast}\rightarrow A$ be the induced linear isomorphism
by $\omega(-,-)$. Then $(A, \circ, \ast, \nu_{r_{\omega}}, \vartheta_{r_{\omega}})$
is a triangular DPP bialgebra, where $r_{\omega}\in A\otimes A$ is given by
$\langle r_{\omega},\; \xi_{1}\otimes\xi_{2}\rangle=\langle R(J_{\omega}(\xi_{1})),\;
\xi_{2}\rangle$ for any $\xi_{1}, \xi_{2}\in A^{\ast}$.
\end{pro}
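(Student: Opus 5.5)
The plan is to reduce the statement to Proposition \ref{pro:sLib-bia}$(ii)$. That result says any symmetric solution of the $\DPYBE$ in $(A, \circ, \ast)$ makes $(A, \circ, \ast, \nu_{r}, \vartheta_{r})$ a triangular DPP bialgebra. So it suffices to show two things about $r_{\omega}$, defined by $\langle r_{\omega}, \xi_{1}\otimes\xi_{2}\rangle=\langle R(J_{\omega}(\xi_{1})), \xi_{2}\rangle$: that it is symmetric, and that it satisfies both $\mathbf{P}_{r_{\omega}}=0$ and $\mathbf{L}_{r_{\omega}}=0$.

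\emph{Symmetry.} First translate Eq. \eqref{qua} at $\lambda=0$ into a statement about $r_{\omega}$. Put $a_{i}=J_{\omega}(\xi_{i})$. Then
$$\langle r_{\omega}, \xi_{1}\otimes\xi_{2}\rangle=\langle \xi_{2}, R(a_{1})\rangle=\omega(a_{2}, R(a_{1})).$$
Here the defining relation $\langle J_{\omega}^{-1}(a), b\rangle=\omega(a,b)$ is read with the convention fixed by the paper. Eq. \eqref{qua} with $\lambda=0$ gives $\omega(a_{2}, R(a_{1}))=-\omega(R(a_{2}), a_{1})$. Skew-symmetry of $\omega$ turns this into $\omega(a_{1}, R(a_{2}))$, which is $\langle r_{\omega}, \xi_{2}\otimes\xi_{1}\rangle$. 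Hence $r_{\omega}=\tau(r_{\omega})$. Equivalently, $r_{\omega}^{\sharp}=R\circ J_{\omega}$ and $\tau(r_{\omega})^{\sharp}=r_{\omega}^{\sharp}$.

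\emph{Yang--Baxter equations.} The paper already notes that $(A,\circ,\ast,\omega,R)$ is a quadratic Rota--Baxter DPP algebra of weight $0$ if and only if $(A,\circ,\omega,R)$ is a quadratic Rota--Baxter perm algebra of weight $0$ and $(A,\ast,\omega,R)$ is a quadratic Rota--Baxter Leibniz algebra of weight $0$. I would apply the cited results to each part. The perm case, \cite[Proposition 5.5]{Lin}, gives $\mathbf{P}_{r_{\omega}}=0$. The Leibniz case, \cite[Corollary 4.18]{BLST}, gives $\mathbf{L}_{r_{\omega}}=0$.

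If a self-contained argument is preferred, the standard route is the $\mathcal{O}$-operator characterization. A symmetric $r$ solves $\mathbf{P}_{r}=0$ (resp. $\mathbf{L}_{r}=0$) exactly when $r^{\sharp}$ is an $\mathcal{O}$-operator of weight zero with respect to the coregular representation. One then checks that $R J_{\omega}$ is such an operator. The check uses the Rota--Baxter identity together with the fact that $J_{\omega}$ intertwines the regular representation with the twisted coregular representation $(A^{\ast}, -\fl_{A}^{\ast}, \fr_{A}^{\ast}-\fl_{A}^{\ast}, \ffl_{A}^{\ast}, -\ffl_{A}^{\ast}-\ffr_{A}^{\ast})$; this intertwining is the invariance of $\omega$. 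Once both equations hold, $r_{\omega}$ is a symmetric solution of the $\DPYBE$, and Proposition \ref{pro:sLib-bia}$(ii)$ finishes the proof.

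\emph{Main obstacle.} The hard step is confirming that the paper's conventions for $\mathbf{P}_{r}$, $\mathbf{L}_{r}$, and the representation isomorphism match those used in \cite{Lin} and \cite{BLST}. That is what justifies citing them directly. If a sign or a leg-ordering mismatch shows up, I would redo the $\mathcal{O}$-operator computation by hand: pair each term of $\mathbf{P}_{r_{\omega}}$ and $\mathbf{L}_{r_{\omega}}$ against $\xi_{1}\otimes\xi_{2}\otimes\xi_{3}$ and use the invariance identities $\omega(a_{1}\circ a_{2}, a_{3})=\omega(a_{2}, a_{1}\circ a_{3})$ and $\omega(a_{1}\ast a_{2}, a_{3})=-\omega(a_{2}, a_{1}\ast a_{3})$.
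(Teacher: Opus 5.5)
Your proposal is correct and is essentially the paper's argument. The paper also gets that $r_{\omega}$ is a symmetric solution of $\mathbf{P}_{r}=0$ and $\mathbf{L}_{r}=0$ by citing the weight-zero quadratic Rota--Baxter results of \cite{Lin} (perm part) and \cite{BLST} (Leibniz part), and then concludes via Proposition~\ref{pro:sLib-bia}$(ii)$; your matching of \cite{Lin} to $\mathbf{P}_{r}$ and \cite{BLST} to $\mathbf{L}_{r}$ is the right one, whereas the paper's sentence swaps them. Your derivation of $r_{\omega}=\tau(r_{\omega})$ from Eq.~\eqref{qua} at $\lambda=0$ is valid, as is your $\mathcal{O}$-operator check that $r_{\omega}^{\sharp}=RJ_{\omega}$ works because $J_{\omega}$ intertwines $(A^{\ast}, -\fl_{A}^{\ast}, \fr_{A}^{\ast}-\fl_{A}^{\ast}, \ffl_{A}^{\ast}, -\ffl_{A}^{\ast}-\ffr_{A}^{\ast})$ with the regular representation; both simply make explicit what the paper leaves to those citations.
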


Second, we show that there is a one-to-one correspondence between factorizable Poisson
bi-dialgebras and quadratic Rota-Baxter DPP algebras of nonzero weight.

\begin{thm}\label{thm:fpb-qrPd}
Let $(A, \circ, \ast, \nu_{r}, \vartheta_{r})$ be a factorizable DPP algebra
with $\mathcal{I}=r^{\sharp}-\tau(r)^{\sharp}$. Then $(A, \circ, \ast, \omega_{\mathcal{I}},
R)$ is a quadratic Rota-Baxter DPP algebra of weight $\lambda$,
where the linear map $R: A\rightarrow A$ and bilinear form $\omega_{\mathcal{I}}(-,-)$
are respectively defined by
$$
R:=\lambda\tau(r)^{\sharp}\mathcal{I}^{-1}\qquad\mbox{and}\qquad
\omega_{\mathcal{I}}(a_{1}, a_{2}):=\langle\mathcal{I}^{-1}(a_{1}),\; a_{2}\rangle,
$$
for any $a_{1}, a_{2}\in A$.

Conversely, for any quadratic Rota-Baxter DPP algebra $(A, \circ, \ast, \omega, R)$
of weight $\lambda$ and a linear isomorphism $J_{\omega}: A^{\ast}\rightarrow A$
induced by $\omega(-,-)$. If $\lambda\neq0$, we define
$$
r^{\sharp}:=\tfrac{1}{\lambda}(R+\lambda\id)
J_{\omega}:\quad A^{\ast}\longrightarrow A,
$$
and define $r\in A\otimes A$ by $\langle r^{\sharp}(\xi),\; \eta\rangle
=\langle r,\; \xi\otimes\eta\rangle$. Then, $r$ is a solution of the Yang-Baxter
equation in $(A, \circ, \ast)$ and induces a factorizable DPP algebra $(A, \circ,
\ast, \nu_{r}, \vartheta_{r})$, where $\nu_{r}$ and $\vartheta_{r}$ are defined by
Eqs. \eqref{cobperm} and \eqref{cobLeb} respectively.
\end{thm}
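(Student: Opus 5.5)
The plan is to reduce everything to the two component theories, exactly as the paper has already done for quadratic DPP algebras and for factorizability. A quadratic Rota-Baxter DPP algebra of weight $\lambda$ is, by the remark after Definition \ref{def:qua-RB}, the same as a quadratic Rota-Baxter perm algebra $(A,\circ,\omega,R)$ of weight $\lambda$ together with a quadratic Rota-Baxter Leibniz algebra $(A,\ast,\omega,R)$ of weight $\lambda$, with the same $\omega$ and the same $R$. Likewise, a quasi-triangular DPP bialgebra $(A,\circ,\ast,\nu_r,\vartheta_r)$ consists of the following data:
\begin{itemize}
\item a single $r$ solving both $\mathbf{P}_r=0$ and $\mathbf{L}_r=0$;
\item the condition that $r-\tau(r)$ is both perm-invariant and Leib-invariant.
\end{itemize}
So the strategy is to apply the perm result of \cite{Lin} and the Leibniz result of \cite{BLST} to the \emph{same} data $(r,\mathcal{I})$, or $(\omega,R)$. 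The key point is that the constructions $R=\lambda\tau(r)^{\sharp}\mathcal{I}^{-1}$, $\omega_{\mathcal{I}}$, and $r^{\sharp}=\frac{1}{\lambda}(R+\lambda\id)J_\omega$ are defined purely linearly. They do not depend on which multiplication is used, so the two component results glue.

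For the forward direction, I take a factorizable DPP bialgebra. Then $(A,\circ,\nu_r)$ is a factorizable perm bialgebra and $(A,\ast,\vartheta_r)$ is a factorizable Leibniz bialgebra with the same $\mathcal{I}$. The steps are as follows.
\begin{enumerate}
\item Nondegeneracy and skew-symmetry of $\omega_{\mathcal{I}}$ are immediate: $\mathcal{I}^{-1}$ is an isomorphism, and $\mathcal{I}^{*}=-\mathcal{I}$ since $r-\tau(r)$ is skew-symmetric.
\item Invariance under $\circ$ follows from perm-invariance of $r-\tau(r)$, via the equivalence recalled before Proposition \ref{pro:qua-tri}. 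Note that the $2$-tensor form of $J=\mathcal{I}$ is $r-\tau(r)$ up to sign.
\item Invariance under $\ast$ follows in the same way from Leib-invariance.
\item The Rota-Baxter identity of weight $\lambda$ for $\circ$ is the factorizable perm theorem of \cite{Lin}. For $\ast$ it is the corresponding theorem of \cite{BLST}. Both are applied with the identical formula for $R$.
\item The compatibility \eqref{qua} is a purely linear identity in $\tau(r)^{\sharp}$, $r^{\sharp}$ and $\mathcal{I}$. It can also be checked directly. Write $R+\lambda\id=\lambda r^{\sharp}\mathcal{I}^{-1}$. Then $\omega(a_1,Ra_2)+\omega(Ra_1,a_2)+\lambda\omega(a_1,a_2)$ becomes
\[
\lambda\bigl\langle \mathcal{I}^{-1}a_1,\ \tau(r)^{\sharp}\mathcal{I}^{-1}a_2\bigr\rangle-\lambda\bigl\langle \mathcal{I}^{-1}a_2,\ r^{\sharp}\mathcal{I}^{-1}a_1\bigr\rangle ,
\]
after using skew-symmetry. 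This vanishes because $\tau(r)^{\sharp}=(r^{\sharp})^{*}$.
\end{enumerate}

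For the converse, I start from $(A,\circ,\ast,\omega,R)$ with $\lambda\neq0$ and define $r^{\sharp}=\frac{1}{\lambda}(R+\lambda\id)J_\omega$.
\begin{enumerate}
\item By \eqref{qua}, $\tau(r)^{\sharp}=(r^{\sharp})^{*}=-\frac{1}{\lambda}RJ_\omega$. This uses skew-symmetry of $\omega$, which gives $J_\omega^{*}=-J_\omega$.
\item Hence $\mathcal{I}=J_\omega$ is an isomorphism, so $r-\tau(r)=\pm r_\omega$.
\item Since $(A,\circ,\ast,\omega)$ is quadratic, $r_\omega$ is DP-invariant by the criterion stated before Proposition \ref{pro:qua-tri}.
\item The Rota-Baxter perm theorem of \cite{Lin} gives $\mathbf{P}_r=0$. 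The Rota-Baxter Leibniz theorem of \cite{BLST} gives $\mathbf{L}_r=0$.
\item So $r$ solves the $\DPYBE$. Proposition \ref{pro:sLib-bia}(i) then yields a quasi-triangular DPP bialgebra, and it is factorizable because $\mathcal{I}=J_\omega$ is invertible.
\end{enumerate}

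The main obstacle is bookkeeping of conventions. The sign and ordering conventions for $r^{\sharp}$, $J_\omega$, invariance, and $R$ must be exactly those used in both \cite{Lin} and \cite{BLST}, so that a single formula for $R$ (respectively $r$) serves both theories. If one source uses $r^{\sharp}\mathcal{I}^{-1}$ or the opposite weight sign, I would instead verify the Rota-Baxter identities directly. For $\ast$, I would express $R(a_1)\ast R(a_2)-R(R(a_1)\ast a_2+a_1\ast R(a_2)+\lambda a_1\ast a_2)$ through the operator form of $\mathbf{L}_r=0$, namely
\[
r^{\sharp}(\xi)\ast r^{\sharp}(\eta)=r^{\sharp}\bigl(\ffl^{*}_{A}(r^{\sharp}\xi)\eta-\ffr^{*}_{A}(\tau(r)^{\sharp}\eta)\xi\bigr)
\]
or a similar identity, combined with invariance of $r-\tau(r)$. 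I would then do the analogous computation for $\circ$ with $\mathbf{P}_r=0$.
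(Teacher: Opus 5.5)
Your proposal is correct and follows the paper's proof. In both directions you apply the factorizable perm results of \cite{Lin} and the factorizable Leibniz results of \cite{BLST} to the same data $(r,\mathcal{I})$, respectively $(\omega,R)$; these glue because every construction is linear, and your hand checks of skew-symmetry, invariance and \eqref{qua} only make explicit what those citations supply.

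One sign slip needs fixing. In the converse, \eqref{qua} together with $J_\omega^{*}=-J_\omega$ gives $\tau(r)^{\sharp}=(r^{\sharp})^{*}=+\tfrac{1}{\lambda}RJ_\omega$, not $-\tfrac{1}{\lambda}RJ_\omega$. It is this sign that yields $\mathcal{I}=r^{\sharp}-\tau(r)^{\sharp}=J_\omega$; your stated sign would give $\tfrac{1}{\lambda}(2R+\lambda\id)J_\omega$ instead.
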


\begin{proof}
If $(A, \circ, \ast, \nu_{r}, \vartheta_{r})$ be a factorizable DPP bialgebra,
by \cite[Theorem 5.8]{Lin}, we get $(A, \circ, \omega_{\mathcal{I}}, R)$
is a quadratic Rota-Baxter perm algebra of weight $\lambda$,
and by \cite[Theorem 4.22]{BLST}, we get $(A, \ast, \omega_{\mathcal{I}}, R)$
is a quadratic Rota-Baxter Leibniz algebra of weight $\lambda$. Hence,
$(A, \circ, \ast, \omega_{\mathcal{I}}, R)$ is a quadratic Rota-Baxter
DPP algebra of weight $\lambda$.

Conversely, if $(A, \circ, \ast, \omega, R)$ is a quadratic Rota-Baxter DPP algebra
of weight $\lambda$, $\lambda\neq0$, then, by \cite[Proposition 4.13]{Lin}, we obtain
that $r$ is a solution of the $\PYBE$ in $(A, \circ)$ and induces a factorizable
perm bialgebra $(A, \circ, \nu_{r})$, and by \cite[Proposition 4.12]{BLST},
$r$ is a solution of the $\LYBE$ in $(A, \ast)$ and induces a factorizable
Leibniz bialgebra $(A, \cdot, \Delta_{r})$. Thus, $r$ is a solution of the
$\DPYBE$ in $(A, \circ, \ast)$ and $(A, \circ, \ast, \nu_{r},
\vartheta_{r})$ is a factorizable DPP bialgebra.
\end{proof}

Let $(A, \circ, \ast, R)$ be a Rota-Baxter DPP algebra of weight $\lambda$.
Consider the coregular representation $(A^{\ast}, \fl_{A}^{\ast},
\fl_{A}^{\ast}-\fr_{A}^{\ast}, \ffl_{A}^{\ast}, -\ffl_{A}^{\ast}-\ffr_{A}^{\ast})$
of $(A, \circ, \ast)$. We get $(A\ltimes A^{\ast}, \tilde{\circ}, \mathcal{B},
R\oplus\hat{R}^{\ast})$ is a quadratic Rota-Baxter perm algebra of weight $\lambda$ \cite{Lin},
and $(A\ltimes A^{\ast}, \tilde{\ast}, \mathcal{B}, R\oplus\hat{R}^{\ast})$ is a quadratic
Rota-Baxter Leibniz algebra of weight $\lambda$ \cite{BLST}, where $\tilde{\circ}$ and
$\tilde{\ast}$ given by \eqref{sp1} and \eqref{sp2} respectively, the bilinear form
$\mathcal{B}(-,-)$ is given by $\mathcal{B}((a_{1}, \xi_{1}),\; (a_{2}, \xi_{2}))
=\langle a_{1}, \xi_{2}\rangle-\langle a_{2}, \xi_{1}\rangle$ for any $a_{1}, a_{2}\in A$,
$\xi_{1}, \xi_{2}\in A^{\ast}$ and $\hat{R}:=-\lambda\id-R$.
Thus, we get a quadratic Rota-Baxter DPP algebra $(A\ltimes A^{\ast},
\cdot_{\ltimes}, \diamond_{\ltimes}, \mathcal{B}, R\oplus\hat{R}^{\ast})$.
As a direct application of Theorem \ref{thm:fpb-qrPd}, we can provide a
method for constructing factorizable NP bialgebras by the semidirect product.

\begin{cor}\label{cor:rbelam}
Let $(A, \circ, \ast, \omega, R)$ be a quadratic Rota-Baxter DPP algebra of
weight $\lambda$, $\lambda\neq0$, $\{e_{1}, e_{2}, \cdots, e_{n}\}$ be
a basis of $A$ and $\{f_{1}, f_{2}, \cdots, f_{n}\}$ be its dual basis in $A^{\ast}$.
Then $(A\ltimes A^{\ast}, \tilde{\circ}, \tilde{\ast}, \nu_{r}, \vartheta_{r})$
is a factorizable DPP bialgebra, where $\nu_{r}$ and $\vartheta_{r}$ defined
by Eqs. \eqref{cobperm} and \eqref{cobLeb} for
$$
r=\tfrac{1}{\lambda}\sum_{i}\Big(f_{i}\otimes (\lambda\id+R)(e_{i})
+R(e_{i})\otimes f_{i}\Big).
$$
\end{cor}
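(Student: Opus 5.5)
The plan is to apply Theorem \ref{thm:fpb-qrPd} to the quadratic Rota-Baxter DPP algebra $(A\ltimes A^{\ast}, \tilde{\circ}, \tilde{\ast}, \mathcal{B}, R\oplus\hat{R}^{\ast})$ of weight $\lambda\neq0$ constructed just before the statement, and then compute the resulting $2$-tensor in the given basis. First I confirm that this quintuple is a quadratic Rota-Baxter DPP algebra. By the remark after Definition \ref{def:qua-RB}, it suffices that $(A\ltimes A^{\ast}, \tilde{\circ}, \mathcal{B}, R\oplus\hat{R}^{\ast})$ is a quadratic Rota-Baxter perm algebra and $(A\ltimes A^{\ast}, \tilde{\ast}, \mathcal{B}, R\oplus\hat{R}^{\ast})$ is a quadratic Rota-Baxter Leibniz algebra of weight $\lambda$. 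These are the cited results of \cite{Lin} and \cite{BLST}. Proposition \ref{pro:rep-dipoi}, applied to the coregular representation, shows that $(A\ltimes A^{\ast}, \tilde{\circ}, \tilde{\ast})$ is a DPP algebra.

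Next I apply the converse direction of Theorem \ref{thm:fpb-qrPd} with $\omega=\mathcal{B}$. This shows that the element $r$ defined by $r^{\sharp}=\tfrac{1}{\lambda}(R\oplus\hat{R}^{\ast}+\lambda\id)J_{\mathcal{B}}$ solves the $\DPYBE$ and yields a factorizable DPP bialgebra $(A\ltimes A^{\ast}, \tilde{\circ}, \tilde{\ast}, \nu_{r}, \vartheta_{r})$. It then remains to identify this $r$ with the displayed formula.

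To do that, I identify $(A\oplus A^{\ast})^{\ast}\cong A^{\ast}\oplus A$ and compute $J_{\mathcal{B}}$ from $\langle J_{\mathcal{B}}^{-1}(x),y\rangle=\mathcal{B}(x,y)$. Since $\mathcal{B}((a,0),(0,\xi))=\langle a,\xi\rangle$ and $\mathcal{B}((0,\xi),(a,0))=-\langle a,\xi\rangle$, $J_{\mathcal{B}}$ sends the functional $e_i$ (on $A^{\ast}$) to $f_i$ up to sign, and the functional $f_i$ (on $A$) to $e_i$ up to the opposite sign. I then apply $\tfrac{1}{\lambda}(R\oplus\hat{R}^{\ast}+\lambda\id)$. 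On the $A$-component this gives $\tfrac1\lambda(R+\lambda\id)(e_i)$. On the $A^{\ast}$-component it gives $\tfrac1\lambda(\hat{R}^{\ast}+\lambda\id)(f_i)=-\tfrac1\lambda R^{\ast}(f_i)$, using $\hat R=-\lambda\id-R$.

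Writing $r=\sum$ (basis element)$\otimes r^{\sharp}$(dual element), the $A$-part contributes $\tfrac1\lambda\sum_i f_i\otimes(\lambda\id+R)(e_i)$. The $A^{\ast}$-part contributes $-\tfrac1\lambda\sum_i e_i\otimes R^{\ast}(f_i)$, up to the sign fixed by $J_{\mathcal{B}}$. Using $\sum_i e_i\otimes R^{\ast}(f_i)=\sum_i R(e_i)\otimes f_i$ (both correspond to $R$), this becomes $\tfrac1\lambda\sum_i R(e_i)\otimes f_i$, which matches the statement.

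The main obstacle is the sign bookkeeping in $J_{\mathcal{B}}$, together with the convention $\langle r^{\sharp}(\xi),\eta\rangle=\langle r,\xi\otimes\eta\rangle$ that fixes which tensor leg carries which factor. I would settle this by checking directly that $\langle r,\xi\otimes\eta\rangle$ agrees with $\tfrac1\lambda\langle (R\oplus\hat R^{\ast}+\lambda\id)J_{\mathcal{B}}(\xi),\eta\rangle$ on basis pairs. If a global sign appears, I would absorb it by recomputing $J_{\mathcal{B}}$ with the opposite orientation of $\mathcal{B}$.
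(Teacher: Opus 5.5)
Your proposal follows the paper's route exactly: apply the converse half of Theorem \ref{thm:fpb-qrPd} to the quadratic Rota-Baxter DPP algebra $(A\ltimes A^{\ast},\tilde{\circ},\tilde{\ast},\mathcal{B},R\oplus\hat{R}^{\ast})$, then read off $r$ from $r^{\sharp}=\tfrac{1}{\lambda}(R\oplus\hat{R}^{\ast}+\lambda\id)J_{\mathcal{B}}$. The sign you leave open is forced and comes out right: $J_{\mathcal{B}}^{-1}(a,\xi)=(-\xi,a)$, so $J_{\mathcal{B}}(\xi,a)=(a,-\xi)$, meaning $e_i$ viewed as a functional on $A^{\ast}$ goes to $e_i$ (not $f_i$, as you wrote) and $f_i$ viewed as a functional on $A$ goes to $-f_i$; that minus sign turns $-\tfrac{1}{\lambda}R^{\ast}(f_i)$ into $+\tfrac{1}{\lambda}R^{\ast}(f_i)$, giving exactly the displayed $r$ with no reorientation of $\mathcal{B}$ needed.
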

	
\begin{proof}
Since $(A, \circ, \ast, \omega, R)$ is a quadratic Rota-Baxter DPP algebra of
weight $\lambda$, we get a quadratic Rota-Baxter DPP algebra $(A\ltimes A^{\ast},
\tilde{\circ}, \tilde{\ast}, \mathcal{B}, R\oplus\hat{R}^{\ast})$ of weight $\lambda$.
For any $a\in A$ and $\xi\in A^{\ast}$,
note that $J_{\mathcal{B}}^{-1}: A\ltimes A^{\ast}\longrightarrow(A\ltimes A^{\ast})^{\ast}$,
$(a, \xi)\mapsto(-\xi, a)$, we obtain a linear map $r^{\sharp}:=\tfrac{1}{\lambda}
(R\oplus\hat{R}^{\ast}+\lambda\id)J_{\mathcal{B}}: (A\ltimes A^{\ast})^{\ast}
\rightarrow A\ltimes A^{\ast}$, $\hat{R}=-\lambda\id-R$. That is, for any $a\in A$
and $\xi\in A^{\ast}$,
$$
r^{\sharp}(\xi, a)=\tfrac{1}{\lambda}(R\oplus\hat{R}^{\ast}+\lambda\id)
J_{\mathfrak{B}}(\xi, a)=\tfrac{1}{\lambda}\big(R(a)+\lambda a,\;
-\hat{R}^{\ast}(\xi)-\lambda\xi\big).
$$
Thus, for any $1\leq, i, j\leq n$, we have
\begin{align*}
\langle r,\; e_{i}\otimes f_{j}\rangle
&=\langle r^{\sharp}(e_{i}),\; f_{j}\rangle
=\tfrac{1}{\lambda}\langle R(e_{i})+\lambda e_{i},\; f_{j}\rangle, \\
\langle r,\; f_{j}\otimes e_{i}\rangle
&=\langle r^{\sharp}(f_{j}),\; e_{i}\rangle
=-\tfrac{1}{\lambda}\langle\hat{R}^{\ast}(f_{j})+\lambda f_{j},\; e_{i}\rangle
=-\tfrac{1}{\lambda}\langle f_{j},\; \hat{R}(e_{i})+\lambda e_{i}\rangle
=\tfrac{1}{\lambda}\langle f_{j},\; R(e_{i})\rangle.
\end{align*}
That is, if we define $r\in(A\ltimes A^{\ast})\otimes(A\ltimes A^{\ast})$ by
$\langle r^{\sharp}(\xi_{1}, a_{1}),\; (\xi_{2}, a_{2})\rangle
=\langle r,\; (\xi_{1}, a_{1})\otimes(\xi_{2}, a_{2})\rangle$, then
$r=\tfrac{1}{\lambda}\sum_{i}\big(f_{i}\otimes(\lambda\id+R)(e_{i})
+R(e_{i})\otimes f_{i}\big)$. Thus, by Theorem \ref{thm:fpb-qrPd}, we get
$(A\ltimes A^{\ast}, \tilde{\circ}, \tilde{\ast}, \nu_{r}, \vartheta_{r})$
is a factorizable DPP algebra.
\end{proof}

In particular, we have

\begin{cor}\label{cor:facNPbia}
Let $(A, \circ, \ast)$ be a DPP algebra, $\{e_{1}, e_{2}, \cdots, e_{n}\}$ be
a basis of $A$ and $\{f_{1}, f_{2}, \cdots, f_{n}\}$ be its dual basis in $A^{\ast}$.
Denote $\tilde{r}=\sum_{i}e_{i}\otimes f_{i}$. Then $(A\ltimes A^{\ast}, \tilde{\circ},
\tilde{\ast}, \nu_{\tilde{r}}, \vartheta_{\tilde{r}})$ a factorizable DPP bialgebra.
\end{cor}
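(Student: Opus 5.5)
The plan is to specialize Corollary \ref{cor:rbelam} to a particular quadratic Rota-Baxter DPP algebra whose associated $r$ collapses to $\tilde{r}$. Concretely, I would not start from a quadratic form on $A$ itself. Instead I would use the semidirect product construction described just before Corollary \ref{cor:rbelam}, with a trivial Rota-Baxter operator.

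\textbf{Step 1: choose the Rota-Baxter operator.} Take any nonzero weight, say $\lambda=1$ (any $\lambda\neq0$ works), and set $R=-\lambda\id$ on $A$. I would check that this is a Rota-Baxter operator of weight $\lambda$ on $(A,\circ,\ast)$. Indeed, the left side is $\lambda^{2}a_{1}\circ a_{2}$, while
$$
R\big(-\lambda a_{1}\circ a_{2}-\lambda a_{1}\circ a_{2}+\lambda a_{1}\circ a_{2}\big)=\lambda^{2}a_{1}\circ a_{2},
$$
and the same computation works for $\ast$. Then $\hat{R}=-\lambda\id-R=0$, so $R\oplus\hat{R}^{\ast}=-\lambda\id_{A}\oplus 0$.

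\textbf{Step 2: obtain the quadratic structure.} By the discussion preceding Corollary \ref{cor:rbelam}, citing \cite{Lin} and \cite{BLST}, $(A\ltimes A^{\ast},\tilde{\circ},\tilde{\ast},\mathcal{B},R\oplus\hat{R}^{\ast})$ is a quadratic Rota-Baxter DPP algebra of weight $\lambda$. This input only requires a Rota-Baxter DPP algebra, not a quadratic one on $A$. Corollary \ref{cor:rbelam} is phrased for a quadratic $(A,\omega,R)$, but its proof only uses this semidirect-product quadratic Rota-Baxter DPP algebra. So I would apply Theorem \ref{thm:fpb-qrPd} directly to $A\ltimes A^{\ast}$, repeating the computation in that proof.

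\textbf{Step 3: compute $r$.} Substituting $R=-\lambda\id$ into
$$
r=\tfrac{1}{\lambda}\sum_{i}\big(f_{i}\otimes(\lambda\id+R)(e_{i})+R(e_{i})\otimes f_{i}\big)
$$
kills the first summand and gives $r=-\sum_{i}e_{i}\otimes f_{i}=-\tilde{r}$.

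\textbf{Step 4: pass from $-\tilde{r}$ to $\tilde{r}$.} Theorem \ref{thm:fpb-qrPd} shows that $(A\ltimes A^{\ast},\tilde{\circ},\tilde{\ast},\nu_{-\tilde{r}},\vartheta_{-\tilde{r}})$ is factorizable. This step is the main obstacle, because the sign must be fixed. The $\DPYBE$ is quadratic in $r$, so $\mathbf{P}_{-r}=\mathbf{P}_{r}$ and $\mathbf{L}_{-r}=\mathbf{L}_{r}$, and DP-invariance of $r-\tau(r)$ is linear in $r$. Hence $\tilde{r}$ also solves the $\DPYBE$ with $\tilde{r}-\tau(\tilde{r})$ DP-invariant, and $\mathcal{I}_{\tilde{r}}=-\mathcal{I}_{-\tilde{r}}$ is still an isomorphism. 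Therefore $(A\ltimes A^{\ast},\tilde{\circ},\tilde{\ast},\nu_{\tilde{r}},\vartheta_{\tilde{r}})$ is a quasi-triangular DPP bialgebra by Proposition \ref{pro:sLib-bia}, and it is factorizable by Definition \ref{def:fact-Leib}.

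\textbf{Alternative route.} One could instead view $A$ as a DPP bialgebra with zero cobracket and apply Proposition \ref{pro:fact-doub}. Then the double is exactly $A\ltimes A^{\ast}$ with the coregular action, and the statement follows immediately. I would mention this as a cross-check.
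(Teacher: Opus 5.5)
Your argument is correct and is essentially the paper's proof: the paper takes $R=\id$ of weight $-1$ (your $R=-\lambda\id$ at $\lambda=-1$), forms the quadratic Rota-Baxter DPP algebra $(A\ltimes A^{\ast},\tilde{\circ},\tilde{\ast},\mathcal{B},R\oplus 0)$, and reads off the element $r$ of Corollary \ref{cor:rbelam}. Your Step 4 is genuinely needed, because that formula gives $r=-\tilde{r}$ rather than the $\tilde{r}$ the paper asserts; your observation that the $\DPYBE$ is quadratic in $r$, while DP-invariance of $r-\tau(r)$ and the map $\mathcal{I}$ are linear in $r$, closes this sign gap.
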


\begin{proof}
Since $\id: A\rightarrow A$ is a Rota-Baxter operator of weight $-1$ on the Poisson
dialgebra $(A, \circ, \ast)$, we obtain a quadratic Rota-Baxter DPP algebra
$(A\ltimes A^{\ast}, \tilde{\circ}, \tilde{\ast}, \mathcal{B}, R\oplus0)$ of weight $-1$.
Note that the element $r\in(A\ltimes A^{\ast}) \otimes(A\ltimes A^{\ast})$ in Corollary
\ref{cor:rbelam} is just $\tilde{r}$, we get $(A\ltimes A^{\ast}, \tilde{\circ},
\tilde{\ast}, \nu_{\tilde{r}}, \vartheta_{\tilde{r}})$ a factorizable DPP bialgebra.
\end{proof}

\begin{ex}\label{ex:fact-sum}
Let $(A=\Bbbk\{e_{1}, e_{2}\}, \circ, \ast)$ be the 2-dimensional DPP algebra
considered in Example \ref{ex:alg}, i.e., $e_{2}\circ e_{2}=e_{1}=e_{2}\ast e_{2}$.
Suppose $\{f_{1}, f_{2}\}$ is the dual basis of $\{e_{1}, e_{2}\}$ in $A^{\ast}$.
Then the semidirect product DPP algebra $(A\ltimes A^{\ast}, \tilde{\circ},
\tilde{\ast})$ is given by $A\ltimes A^{\ast}=\Bbbk\{e_{1}, e_{2}, f_{1}, f_{2}\}$,
\begin{align*}
&e_{2}\tilde{\circ} e_{2}=e_{2},\qquad\quad\;
e_{2}\tilde{\circ} f_{2}=-f_{2}=f_{2}\tilde{\circ} e_{2},\\
&e_{2}\tilde{\ast} e_{2}=e_{2}, \qquad\quad
e_{2}\tilde{\ast} f_{2}=-f_{2}=f_{2}\tilde{\ast} e_{2}.
\end{align*}
Then one can check that $\tilde{r}=e_{1}\otimes f_{1}+e_{2}\otimes f_{2}$ is a solution of
the $\DPYBE$ in $(A\ltimes A^{\ast}, \tilde{\circ}, \tilde{\ast})$, and
the induced coproducts are given by
$$
\nu_{\tilde{r}}(e_{2})=-2e_{2}\otimes f_{2},\qquad
\nu_{\tilde{r}}(f_{2})=f_{2}\otimes f_{2}=\vartheta_{\tilde{r}}(f_{2}),\qquad
\vartheta_{\tilde{r}}(e_{2})=e_{2}\otimes f_{2}.
$$
Thus, we obtain a DPP bialgebra $(A\ltimes A^{\ast}, \tilde{\circ},
\tilde{\ast}, \nu_{\tilde{r}}, \vartheta_{\tilde{r}})$. We denote the dual basis of
$\{e_{1}, e_{2}, f_{1}, f_{2}\}$ in $(A\ltimes A^{\ast})^{\ast}$ by $\{e_{1}^{\ast},
e_{2}^{\ast}, f_{1}^{\ast}, f_{2}^{\ast}\}$. Note that the linear maps $\tilde{r}^{\sharp},
\tau(\tilde{r})^{\sharp}: (A\ltimes A^{\ast})^{\ast}\rightarrow A\ltimes A^{\ast}$ are given by
$$
\tilde{r}^{\sharp}(e_{1}^{\ast})=f_{1},\qquad
\tilde{r}^{\sharp}(e_{2}^{\ast})=f_{2},\qquad
\tau(\tilde{r})^{\sharp}(f_{1}^{\ast})=-e_{1},\qquad
\tau(\tilde{r})^{\sharp}(f_{2}^{\ast})=-e_{2},
$$
we get $\mathcal{I}=\tilde{r}^{\sharp}-\tau(\tilde{r})^{\sharp}$ is given by
$$
\mathcal{I}(e_{1}^{\ast})=f_{1},\qquad
\mathcal{I}(e_{2}^{\ast})=f_{2},\qquad
\mathcal{I}(f_{1}^{\ast})=e_{1},\qquad
\mathcal{I}(f_{2}^{\ast})=e_{2}.
$$
Clearly, $\mathcal{I}$ is a linear isomorphism. Thus, $(A\ltimes A^{\ast}, \tilde{\circ},
\tilde{\ast}, \nu_{\tilde{r}}, \vartheta_{\tilde{r}})$ is a
factorizable DPP bialgebra.
\end{ex}

%

\bigskip
\section{Infinite-dimensional DPP bialgebras from Poisson bialgebras} \label{sec:poi-di}
In this section, we recall the notion of a quadratic $\bz$-graded perm algebra, as
a $\bz$-graded perm algebra equipped with an invariant bilinear form. We show that
the tensor product of a finite-dimensional Poisson bialgebra and a quadratic
$\bz$-graded perm algebra can be naturally endowed with a completed DPP bialgebra
structure. Moreover, by discussing the relationship between the completed solution of
the $\DPYBE$ in the induced infinite-dimensional DPP algebra and the solution of the
$\PoiYBE$ in the original Poisson algebra, we get that the induced
infinite-dimensional DPP bialgebra is coboundary (resp. quasi-triangular, triangular)
if the original Poisson bialgebra is coboundary (resp. quasi-triangular, triangular).
First, we construct DPP (co)algebras from Poisson (co)algebras by the tensor product
with $\bz$-graded perm algebras.

\begin{defi}\label{def:zgrad-alg}
A {\bf $\bz$-graded DPP algebra} (resp. {\bf $\bz$-graded perm algebra})
is a DPP algebra $(A, \circ, \ast)$ (resp. a perm algebra $(B, \circ)$) with a
linear decomposition $A=\oplus_{i\in\bz}A_{i}$ (resp. $B=\oplus_{i\in\bz}B_{i}$) such that
each $A_{i}$ (resp. $B_{i}$) is finite-dimensional, $A_{i}\circ A_{j}\subseteq A_{i+j}$
and $A_{i}\ast A_{j}\subseteq A_{i+j}$ (resp. $B_{i}\circ B_{j}\subseteq B_{i+j}$)
for all $i, j\in\bz$.
\end{defi}

\begin{ex}[\cite{LZB}]\label{ex:grperm}
Let $B=\{f_{1}\partial_{1}+f_{2}\partial_{2}\mid f_{1}, f_{2}\in\Bbbk[x_{1}^{\pm},
x_{2}^{\pm}]\}$ and define a binary operation $\circ: B\otimes B\rightarrow B$ by
\begin{align*}
(x_{1}^{i_{1}}x_{2}^{i_{2}}\partial_{s})\circ(x_{1}^{j_{1}}x_{2}^{j_{2}}\partial_{t})
:=\delta_{s,1}x_{1}^{i_{1}+j_{1}+1}x_{2}^{i_{2}+j_{2}}\partial_{t}
+\delta_{s,2}x_{1}^{i_{1}+j_{1}}x_{2}^{i_{2}+j_{2}+1}\partial_{t},
\end{align*}
for any $i_{1}, i_{2}, j_{1}, j_{2}\in\bz$ and $s, t\in\{1, 2\}$.
Then $(B, \circ)$ is a $\bz$-graded perm algebra with the linear decomposition
$B=\oplus_{i\in\bz}B_{i}$, where
$$
B_{i}=\Big\{\sum_{k=1}^{2}f_{k}\partial_{k}\mid f_{k}\text{ is a homogeneous
polynomial with }\deg(f_{k})=i-1,\; k=1,2\Big\}
$$
for all $i\in\bz$.
\end{ex}

By Proposition \ref{pro:poi+perm}, we have the following proposition.

\begin{pro}\label{pro:aff-poi}
Let $(P, \cdot, [-,-])$ be a finite-dimensional Poisson algebra and
$(B=\oplus_{i\in\bz}B_{i}, \circ_{B})$ be a $\bz$-graded perm algebra.
Define a binary operation on $P\otimes B$ by
\begin{align*}
(p_{1}, b_{1})\circ(p_{2}, b_{2})&:=(p_{1}p_{2})\otimes(b_{1}\circ_{B} b_{2}),\\
(p_{1}, b_{1})\ast(p_{2}, b_{2})&:=[p_{1}, p_{2}]\otimes(b_{1}\circ_{B} b_{2}),
\end{align*}
for any $p_{1}, p_{2}\in P$ and $b_{1}, b_{2}\in B$.
Then $(P\otimes B, \circ, \ast)$ is a $\bz$-graded DPP algebra, which is called
the {\bf $\bz$-graded DPP algebra induced by $(P, \cdot, [-,-])$ and $(B, \circ_{B})$}.
\end{pro}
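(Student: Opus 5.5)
The plan is to obtain the DPP algebra structure directly from Proposition \ref{pro:poi+perm} and then check the grading separately. Concretely, I will:

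\begin{enumerate}
\item Note that Proposition \ref{pro:poi+perm} applies to any Poisson algebra and any perm algebra, with no finiteness assumption on $B$. Applying it to $(P, \cdot, [-,-])$ and the underlying perm algebra $(B, \circ_{B})$ shows at once that $(P\otimes B, \circ, \ast)$ is a DPP algebra.
\item Define the decomposition $P\otimes B=\oplus_{i\in\bz}(P\otimes B_{i})$, setting $A_{i}:=P\otimes B_{i}$. This is a direct sum decomposition because tensoring with $P$ commutes with direct sums.
\item Check finite-dimensionality: $\dim A_{i}=\dim P\cdot\dim B_{i}<\infty$, since $P$ is finite-dimensional by hypothesis and each $B_{i}$ is finite-dimensional by Definition \ref{def:zgrad-alg}.
\item Check compatibility with the grading. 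Take $p_{1}\otimes b_{1}\in A_{i}$ and $p_{2}\otimes b_{2}\in A_{j}$ with $b_{1}\in B_{i}$ and $b_{2}\in B_{j}$. Then $b_{1}\circ_{B}b_{2}\in B_{i+j}$. Hence both $(p_{1}p_{2})\otimes(b_{1}\circ_{B}b_{2})$ and $[p_{1},p_{2}]\otimes(b_{1}\circ_{B}b_{2})$ lie in $A_{i+j}$. Since $A_{i}$ is spanned by such pure tensors and $\circ$, $\ast$ are bilinear, we get $A_{i}\circ A_{j}\subseteq A_{i+j}$ and $A_{i}\ast A_{j}\subseteq A_{i+j}$.
\end{enumerate}

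These facts together give the $\bz$-graded DPP algebra structure required by Definition \ref{def:zgrad-alg}.

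There is no real obstacle here. The only point needing care is the first step: Proposition \ref{pro:poi+perm} must be used for an infinite-dimensional perm algebra. Its proof is purely identity-level and uses no finiteness of $B$, so it goes through unchanged.
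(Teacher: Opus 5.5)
Your proposal is correct and follows the same route as the paper. The paper invokes Proposition \ref{pro:poi+perm} for the DPP identities and then uses the grading $P\otimes B_{i}$ inherited from $B$. You additionally spell out two points the paper leaves implicit: that each $P\otimes B_{i}$ is finite-dimensional because $P$ is, and that Proposition \ref{pro:poi+perm} uses no finiteness assumption on $B$.
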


\begin{proof}
By Proposition \ref{pro:poi+perm}, $(P\otimes A, \circ, \ast)$ is a DPP algebra.
Since $(B=\oplus_{i\in\bz}B_{i}, \circ_{B})$ is $\bz$-graded, $(P\otimes B, \circ, \ast)$
is a $\bz$-graded DPP algebra.
\end{proof}

Recall that a {\bf cocommutative coassociative coalgebra} is a vector space $A$ with
a linear map $\Delta: A\rightarrow A\otimes A$ such that $\tau\Delta=\Delta$ and
$(\Delta\otimes\id)\Delta=(\id\otimes\Delta)\Delta$. A {\bf Lie coalgebra} is a vector
space $\g$ with a linear map $\delta: \g\rightarrow\g\otimes\g$ such that $\tau\delta
=-\delta$ and $(\id\otimes\delta)\delta-(\tau\otimes\id)(\id\otimes\delta)\delta
=(\delta\otimes\id)\delta$. If $(P, \Delta)$ is a cocommutative coassociative coalgebra,
$(P, \delta)$ be a Lie coalgebra and $(\id\otimes\Delta)\delta=(\delta\otimes\id)\Delta
+(\tau\otimes\id)(\id\otimes\delta)\Delta$, then $(P, \Delta, \delta)$ is called a
{\bf Poisson coalgebra}. To carry out the Poisson coalgebra affinization, we need to
extend the codomain of the comultiplications $\nu, \vartheta$ to allow infinite sums.
Let $U=\oplus_{i\in\bz}U_{i}$ and $V=\oplus_{j\in\bz}V_{j}$ be $\bz$-graded vector spaces.
We call the {\bf completed tensor product} of $U$ and $V$ to be the vector space
$$
U\,\hat{\otimes}\,V:=\prod_{i,j\in\bz}U_{i}\otimes V_{j}.
$$
If $U$ and $V$ are finite-dimensional, then $U\,\hat{\otimes}\,V$ is just the usual
tensor product $U\otimes V$. In general, an element in $U\,\hat{\otimes}\,V$ is an
infinite formal sum $\sum_{i,j\in\bz}X_{ij} $ with $X_{ij}\in U_{i}\otimes V_{j}$.
So $X_{ij}=\sum_{\alpha} u_{i, \alpha}\otimes v_{j, \alpha}$ for pure tensors
$u_{i, \alpha}\otimes v_{j, \alpha}\in U_{i}\otimes V_{j}$ with $\alpha$ in a finite
index set. Thus a general term of $U\,\hat{\otimes}\,V$ is a possibly infinite sum
$\sum_{i,j,\alpha}u_{i\alpha}\otimes v_{j\alpha}$, where $i, j\in\bz$ and $\alpha$ is
in a finite index set (which might depend on $i, j$). With these notations, for linear
maps $f: U\rightarrow U'$ and $g: V\rightarrow V'$, define
$$
f\,\hat{\otimes}\,g: U\,\hat{\otimes}\,V\rightarrow U'\,\hat{\otimes}\,V',
\qquad \sum_{i,j,\alpha}u_{i,\alpha}\otimes v_{j, \alpha}\mapsto
\sum_{i,j,\alpha} f(u_{i, \alpha})\otimes g(v_{j, \alpha}).
$$
Also the twist map $\tau$ has its completion
$\hat{\tau}: V\,\hat{\otimes}\,V\rightarrow V\,\hat{\otimes}\,V$,
$\sum_{i,j,\alpha}u_{i, \alpha}\otimes v_{j, \alpha}\mapsto
\sum_{i,j,\alpha}v_{j, \alpha}\otimes u_{i, \alpha}$.
Moreover, we define a (completed) comultiplication to be a linear map
$\vartheta: V\rightarrow V\,\hat{\otimes}\,V$,
$\vartheta(v)=\sum_{i, j, \alpha}v_{1, i, \alpha}\otimes v_{2, j, \alpha}$.
Then we have the well-defined map
$$
(\vartheta\,\hat{\otimes}\,\id)\vartheta(v)=(\vartheta\,\hat{\otimes}\,\id)
\Big(\sum_{i,j,\alpha}v_{1, i, \alpha}\otimes v_{2, j, \alpha}\Big)
:=\sum_{i,j,\alpha}\vartheta(v_{1, i, \alpha})\otimes v_{2, j, \alpha}
\in V\,\hat{\otimes}\,V\,\hat{\otimes}\,V.
$$

\begin{defi}\label{def:Pd-coa}
\begin{enumerate}
\item[$(i)$] A {\bf completed perm coalgebra} (resp. {\bf completed Leibniz coalgebra}) is a
    pair $(B, \nu)$ (resp. $(A, \vartheta)$), where $B=\oplus_{i\in\bz}B_{i}$ (resp.
    $A=\oplus_{i\in\bz}A_{i}$) is a $\bz$-graded vector space and $\nu: B\rightarrow
    B\,\hat{\otimes}\,B$ (resp. $\vartheta: A\rightarrow A\,\hat{\otimes}\,A$) is a
    linear map satisfying
\begin{align*}
    &\qquad (\nu\,\hat{\otimes}\,\id)\nu=(\id\,\hat{\otimes}\,\nu)\nu
    =(\tau\,\hat{\otimes}\,\id)(\nu\,\hat{\otimes}\,\id)\nu.\\
    &(\mbox{resp.}\ \
    (\vartheta\,\hat{\otimes}\,\id)\vartheta=(\id\,\hat{\otimes}\,\vartheta)\vartheta
    +(\id\,\hat{\otimes}\,\hat{\tau})(\vartheta\,\hat{\otimes}\,\id)\vartheta.)
\end{align*}
\item[$(ii)$] A {\bf completed Poisson codialgebra} is a triple $(A, \nu, \vartheta)$ where
    $A=\oplus_{i\in\bz}A_{i}$ is a $\bz$-graded vector space and $\nu, \vartheta:
    A\rightarrow A\,\hat{\otimes}\, A$ are linear maps satisfying
\begin{align*}
(\id\,\hat{\otimes}\,\nu)\vartheta&=(\vartheta\,\hat{\otimes}\,\id)\nu
+(\hat{\tau}\,\hat{\otimes}\,\id)(\id\,\hat{\otimes}\,\vartheta)\nu,\\
(\nu\,\hat{\otimes}\,\id)\vartheta&=(\id\,\hat{\otimes}\,\vartheta)\nu
+(\hat{\tau}\,\hat{\otimes}\,\id)(\id\,\hat{\otimes}\,\vartheta)\nu,\\
(\vartheta\,\hat{\otimes}\,\id)\nu&=-(\hat{\tau}\,\hat{\otimes}\,\id)
(\vartheta\,\hat{\otimes}\,\id)\nu.
\end{align*}
\end{enumerate}
\end{defi}

\begin{ex}[\cite{LZB}]\label{ex:grpermco}
Consider the $\bz$-graded vector space $B=\{f_{1}\partial_{1}+f_{2}\partial_{2}\mid
f_{1}, f_{2}\in\Bbbk[x_{1}^{\pm}, x_{2}^{\pm}]\}=\oplus_{i\in\bz}B_{i}$ given in
Example \ref{ex:grperm}. Define a linear map $\nu: B\rightarrow B\,\hat{\otimes}\,B$ by
\begin{align*}
\nu(x_{1}^{m}x_{2}^{n}\partial_{1})&=\sum_{i_{1}, i_{2}\in\bz}
\Big(x_{1}^{i_{1}}x_{2}^{i_{2}}\partial_{1}\otimes x_{1}^{m-i_{1}}
x_{2}^{n-i_{2}+1}\partial_{1} -x_{1}^{i_{1}}x_{2}^{i_{2}}\partial_{2}
\otimes x_{1}^{m-i_{1}+1}x_{2}^{n-i_{2}}\partial_{1}\Big), \\[-2mm]
\nu(x_{1}^{m}x_{2}^{n}\partial_{2})&=\sum_{i_{1}, i_{2}\in\bz}
\Big(x_{1}^{i_{1}}x_{2}^{i_{2}}\partial_{1}\otimes x_{1}^{m-i_{1}}
x_{2}^{n-i_{2}+1}\partial_{2}-x_{1}^{i_{1}}x_{2}^{i_{2}}\partial_{2}
\otimes x_{1}^{m-i_{1}+1}x_{2}^{n-i_{2}}\partial_{2}\Big),
\end{align*}
for any $m, n \in\bz$. Then $(B=\oplus_{i\in\bz}B_{i}, \nu)$ is a completed perm coalgebra.
\end{ex}

Now, we consider the dual version of Proposition \ref{pro:aff-poi} for Poisson coalgebras.

\begin{pro}\label{pro:coperm-copoi}
Let $(P, \Delta, \delta)$ be a finite-dimensional Poisson coalgebra and
$(B=\oplus_{i\in\bz}B_{i}, \nu_{B})$ be a completed perm coalgebra. Define two linear
maps $\nu, \vartheta: P\otimes B\rightarrow(P\otimes B)\,\hat{\otimes}\,(P\otimes B)$ by
\begin{align*}
\nu(p\otimes b)&=\Delta(p)\bullet\nu_{B}(b)=\sum_{(p)}\sum_{i,j,\alpha}
(p_{(1)}\otimes b_{1,i,\alpha})\otimes(p_{(2)}\otimes b_{2,j,\alpha}),\\[-2mm]
\vartheta(p\otimes b)&=\delta(p)\bullet\nu_{B}(b))=\sum_{[p]}\sum_{i,j,\alpha}
(p_{[1]}\otimes b_{1,i,\alpha})\otimes(p_{[2]}\otimes b_{2,j,\alpha}),
\end{align*}
for any $p\in P$ and $b\in B$, where $\Delta(p)=\sum_{(p)}p_{(1)}\otimes p_{(2)}$,
$\delta(p)=\sum_{[p]}p_{[1]}\otimes p_{[2]}$ in the Sweedler notation
and $\nu_{B}(b)=\sum_{i,j,\alpha}b_{1,i,\alpha}\otimes b_{2,j,\alpha}$.
Then $(P\otimes B, \nu, \vartheta)$ is a completed Poisson codialgebra.
\end{pro}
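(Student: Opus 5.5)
The plan is to treat the claim as the coalgebra dual of Proposition \ref{pro:poi+perm}, and to check each axiom in Definition \ref{def:Pd-coa} directly in Sweedler notation. Since $P$ is finite-dimensional, every composite splits as a finite sum over the Sweedler indices of $P$ tensored with a possibly infinite sum coming from $\nu_{B}$. The key observation is that the completed structure maps factor through this splitting. Write $\sigma$ for the canonical rearrangement $(P\otimes P\otimes P)\otimes(B\,\hat{\otimes}\,B\,\hat{\otimes}\,B)\rightarrow (P\otimes B)\,\hat{\otimes}\,(P\otimes B)\,\hat{\otimes}\,(P\otimes B)$. Then
\begin{align*}
(\vartheta\,\hat{\otimes}\,\id)\nu(p\otimes b)&=\sigma\big((\delta\otimes\id)\Delta(p)\otimes(\nu_{B}\,\hat{\otimes}\,\id)\nu_{B}(b)\big),\\
(\id\,\hat{\otimes}\,\nu)\vartheta(p\otimes b)&=\sigma\big((\id\otimes\Delta)\delta(p)\otimes(\id\,\hat{\otimes}\,\nu_{B})\nu_{B}(b)\big),
\end{align*}
and analogously for every other composite of $\nu,\vartheta$. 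The twist behaves compatibly: $\hat{\tau}\,\hat{\otimes}\,\id$ on the left side corresponds to applying $\tau\otimes\id$ on the $P$-factor and $\hat{\tau}\,\hat{\otimes}\,\id$ on the $B$-factor simultaneously. Making these identities precise is the first step. It is routine once we note that each $\nu_{B}(b)$ is a formal sum of homogeneous pieces, and that $\nu_{B}$ applied termwise is well defined, as in the discussion preceding Definition \ref{def:Pd-coa}.

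The second step is to verify the axioms one at a time.
\begin{itemize}
\item \emph{Perm coalgebra axiom for $\nu$.} This follows because $(P,\Delta)$ is coassociative and cocommutative and $(B,\nu_{B})$ is a completed perm coalgebra. For the twisted identity, use $(\tau\otimes\id)(\Delta\otimes\id)\Delta=(\Delta\otimes\id)\Delta$, which holds by cocommutativity.
\item \emph{Leibniz coalgebra axiom for $\vartheta$.} On the $B$-side all three composites $(\nu_{B}\otimes\id)\nu_{B}$, $(\id\otimes\nu_{B})\nu_{B}$ and $(\id\otimes\hat{\tau})(\nu_{B}\otimes\id)\nu_{B}$ need to coincide up to the appropriate twist. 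Here I would use the perm coalgebra relations, in particular the dual of $a_{1}\circ(a_{2}\circ a_{3})=a_{2}\circ(a_{1}\circ a_{3})$. The identity then reduces to the co-Jacobi identity of $\delta$, rewritten with $\tau\delta=-\delta$.
\item \emph{The three compatibility conditions of the completed Poisson codialgebra.} These reduce to the Poisson coalgebra compatibility $(\id\otimes\Delta)\delta=(\delta\otimes\id)\Delta+(\tau\otimes\id)(\id\otimes\delta)\Delta$, together with the skew-symmetry of $\delta$ and the cocommutativity of $\Delta$. For example, the third condition $(\vartheta\hat{\otimes}\id)\nu=-(\hat\tau\hat{\otimes}\id)(\vartheta\hat{\otimes}\id)\nu$ follows from $\tau\delta=-\delta$ combined with $(\hat{\tau}\,\hat{\otimes}\,\id)(\nu_{B}\,\hat{\otimes}\,\id)\nu_{B}=(\nu_{B}\,\hat{\otimes}\,\id)\nu_{B}$.
\end{itemize}

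The main obstacle is the bookkeeping of which $B$-composite appears with which $P$-composite. In each required identity, the terms on the $P$-side are different, for instance $(\delta\otimes\id)\Delta$ versus $(\tau\otimes\id)(\id\otimes\delta)\Delta$. They can only be combined if the corresponding $B$-composites are equal, possibly after a twist. So I would first record a lemma: for a completed perm coalgebra, the three elements
\[
(\nu_{B}\,\hat{\otimes}\,\id)\nu_{B}(b),\qquad (\id\,\hat{\otimes}\,\nu_{B})\nu_{B}(b),\qquad (\hat{\tau}\,\hat{\otimes}\,\id)(\id\,\hat{\otimes}\,\nu_{B})\nu_{B}(b)
\]
all coincide, and also equal $(\hat{\tau}\,\hat{\otimes}\,\id)(\nu_{B}\,\hat{\otimes}\,\id)\nu_{B}(b)$. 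This is exactly the dual of the perm identities used in Proposition \ref{pro:poi+perm}. With this lemma, each axiom for $(P\otimes B,\nu,\vartheta)$ becomes the corresponding Poisson coalgebra identity tensored with a single common $B$-element, and the proof mirrors that of Proposition \ref{pro:poi+perm} term by term.
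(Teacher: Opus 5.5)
Your factorization strategy is the paper's. Every composite splits as a $P$-composite $\bullet$ a $B$-composite, and your lemma (that $(\nu_{B}\hat{\otimes}\id)\nu_{B}(b)$, $(\id\hat{\otimes}\nu_{B})\nu_{B}(b)$ and their $\hat{\tau}\hat{\otimes}\id$-twists coincide) is exactly what the paper uses for the perm axiom and the three compatibility conditions. Those parts are fine. For the second condition you need $(\Delta\otimes\id)\delta=(\id\otimes\delta)\Delta+(\tau\otimes\id)(\id\otimes\delta)\Delta$, which follows from the stated compatibility via $\tau\delta=-\delta$ and $\tau\Delta=\Delta$, as you indicate.

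The step that fails is the Leibniz coalgebra axiom. In the form displayed in Definition~\ref{def:Pd-coa}, its last term is $\big((\id\otimes\tau)(\delta\otimes\id)\delta(p)\big)\bullet\big((\id\hat{\otimes}\hat{\tau})(\nu_{B}\hat{\otimes}\id)\nu_{B}(b)\big)$. This is a twist in slots $2,3$, which your lemma does not cover. The dual of $a_{1}\circ(a_{2}\circ a_{3})=a_{2}\circ(a_{1}\circ a_{3})$ that you invoke is a twist in slots $1,2$, and no perm relation can supply the other one. Already for $B=B_{0}$, pairing with $\beta_{1}\otimes\beta_{2}\otimes\beta_{3}$ gives $(\beta_{1}\circ\beta_{3})\circ\beta_{2}$ versus $(\beta_{1}\circ\beta_{2})\circ\beta_{3}$ in the dual perm algebra $B^{\ast}$. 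A perm algebra's triple product is only symmetric in its first two entries.

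In fact the identity as displayed is false here. It is the dual of the right Leibniz identity $(X_{1}\ast X_{2})\ast X_{3}=X_{1}\ast(X_{2}\ast X_{3})+(X_{1}\ast X_{3})\ast X_{2}$. On the dual algebra $(x\otimes\beta)\ast(y\otimes\gamma)=[x,y]\otimes(\beta\circ\gamma)$, take $X_{i}=x_{i}\otimes\beta_{i}$; after using Jacobi, the defect is $[[x_{1},x_{3}],x_{2}]\otimes\big((\beta_{1}\circ\beta_{2})\circ\beta_{3}-(\beta_{1}\circ\beta_{3})\circ\beta_{2}\big)$. Take $P^{\ast}=\mathfrak{sl}_{2}$ with zero product (so $\Delta=0$) and $B^{\ast}$ the perm algebra $u\circ u=u$, $u\circ v=v$. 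With standard basis elements $x_{1}=x_{2}=e$, $x_{3}=f$ and $\beta_{1}=\beta_{2}=u$, $\beta_{3}=v$, the defect is $2e\otimes v\neq0$.

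The resolution is that the displayed Leibniz coalgebra axiom is inconsistent with the paper's left Leibniz convention. The dual of $a_{1}\ast(a_{2}\ast a_{3})=(a_{1}\ast a_{2})\ast a_{3}+a_{2}\ast(a_{1}\ast a_{3})$ is $(\id\hat{\otimes}\vartheta)\vartheta=(\vartheta\hat{\otimes}\id)\vartheta+(\hat{\tau}\hat{\otimes}\id)(\id\hat{\otimes}\vartheta)\vartheta$. For this form, all three $B$-composites equal the common element by your lemma. The $P$-part is then verbatim the Lie coalgebra axiom $(\id\otimes\delta)\delta-(\tau\otimes\id)(\id\otimes\delta)\delta=(\delta\otimes\id)\delta$, with no rewriting via $\tau\delta=-\delta$ needed. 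The paper does not verify this step itself; it cites \cite[Proposition 3.2]{Hou}. Once you state and check the axiom in this left form, your argument is complete and agrees with the paper's.
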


\begin{proof}
For any $\sum_{l}p'_{l}\otimes p''_{l}\otimes p'''_{l}\in P\otimes P\otimes P$ and
$\sum_{i,j,k,\alpha}b'_{i,\alpha}\otimes b''_{j,\alpha}
\otimes b'''_{k,\alpha}\in B\,\hat{\otimes}\,B\,\hat{\otimes}\,B$, we denote
$$
\Big(\sum_{l}p'_{l}\otimes p''_{l}\otimes p'''_{l}\Big)\bullet
\Big(\sum_{i,j,k,\alpha}b'_{i,\alpha}\otimes b''_{j,\alpha}\otimes b'''_{k,\alpha}\Big)
=\sum_{l}\sum_{i,j,k,\alpha}(p'_{l}\otimes b'_{i,\alpha})\otimes
(p''_{l}\otimes b''_{j,\alpha})\otimes(p'''_{l}\otimes b'''_{k,\alpha}).
$$
Then, by using the above notations, since $(B, \nu_{B})$ is a completed perm coalgebra
and $(P, \Delta, \delta)$ is a Poisson coalgebra, for any $p\otimes b\in P\otimes B$, we have
\begin{align*}
(\nu\,\hat{\otimes}\,\id)(\nu(p\otimes b))
&=\big((\Delta\otimes\id)(\Delta(p))\big)\bullet
\big((\nu_{B}\,\hat{\otimes}\,\id)(\nu_{B}(b))\big)\\
&=\big((\id\otimes\Delta)(\Delta(p))\big)\bullet
\big((\id\,\hat{\otimes}\,\nu_{B})(\nu_{B}(b))\big)
=(\id\,\hat{\otimes}\,\nu)(\nu(p\otimes b)).
\end{align*}
Similarly, we have $(\id\,\hat{\otimes}\,\nu)\nu=(\tau\,\hat{\otimes}\,\id)
(\nu\,\hat{\otimes}\,\id)\nu$. Thus, $(P\otimes B, \nu)$ is a completed perm coalgebra.
By \cite[Proposition 3.2]{Hou}, we get $(P\otimes B, \vartheta)$ is a completed Leibniz
coalgebra. Moreover, for any $p\otimes b\in P\otimes B$, note that
\begin{align*}
&\; (\vartheta\,\hat{\otimes}\,\id)(\nu(p\otimes b))
+(\hat{\tau}\,\hat{\otimes}\,\id)((\vartheta\,\hat{\otimes}\,\id)(\nu(p\otimes b)))\\
=&\; \big((\delta\otimes\id)(\Delta(p))\big)\bullet
\big((\nu_{B}\,\hat{\otimes}\,\id)(\nu_{B}(b))\big)
+\big((\tau\otimes\id)(\delta\otimes\id)(\Delta(p))\big)\bullet
\big((\tau\,\hat{\otimes}\,\id)((\nu_{B}\,\hat{\otimes}\,\id)(\nu_{B}(b)))\big)\\
=&\; \big((\delta\otimes\id)(\Delta(p))-(\delta\otimes\id)(\Delta(p))\big)\bullet
\big((\nu_{B}\,\hat{\otimes}\,\id)(\nu_{B}(b))\big)\\
=&\; 0,
\end{align*}
since $(B, \nu_{B})$ is a completed perm coalgebra and $(P, \delta)$ is a Lie coalgebra.
That is, $(\vartheta\,\hat{\otimes}\,\id)\nu=-(\hat{\tau}\,\hat{\otimes}\,\id)
(\vartheta\,\hat{\otimes}\,\id)\nu$. Similarly, $(\id\,\hat{\otimes}\,\nu)\vartheta
=(\vartheta\,\hat{\otimes}\,\id)\nu+(\hat{\tau}\,\hat{\otimes}\,\id)
(\id\,\hat{\otimes}\,\vartheta)\nu$ and $(\nu\,\hat{\otimes}\,\id)\vartheta
=(\id\,\hat{\otimes}\,\vartheta)\nu+(\hat{\tau}\,\hat{\otimes}\,\id)
(\id\,\hat{\otimes}\,\vartheta)\nu$.
Thus, $(P\otimes B, \nu, \vartheta)$ is a completed Poisson codialgebra.
\end{proof}

Next, we consider the DPP bialgebra structure on the tensor product of
a Poisson bialgebra and a $\bz$-graded perm algebra.

\begin{defi}\label{def:quad}
Let $\omega(-, -)$ be a bilinear form on a $\bz$-graded perm algebra
$(B=\oplus_{i\in\bz}B_{i}, \circ)$.

$(i)$ $\omega(-, -)$ is called {\bf invariant}, if $\omega(b_{1}\circ b_{2},\; b_{3})=
        \omega(b_{1},\; b_{2}\circ b_{3}-b_{3}\circ b_{2})$ for any
        $b_{1}, b_{2}, b_{3}\in B$;
$(ii)$ $\omega(-, -)$ is called {\bf graded}, if there exists some $m\in\bz$
        such that $\varpi(B_{i}, B_{j})=0$ when $i+j+m\neq0$.\\
A {\bf quadratic $\bz$-graded perm algebra}, denoted by $(B=\oplus_{i\in\bz}B_{i},
\circ, \omega)$, is a $\bz$-graded perm algebra together with a skew-symmetric
invariant nondegenerate graded bilinear form.

In particular, if $B=B_{0}$, a quadratic $\bz$-graded perm algebra
$(B=\oplus_{i\in\bz}B_{i}, \circ, \omega)$ is just a {\bf quadratic perm algebra}.
\end{defi}

\begin{ex}[\cite{LZB}]\label{ex:qu-perm}
Let $(B=\oplus_{i\in\bz}B_{i}, \circ)$ be the $\bz$-graded perm algebra given in
Example \ref{ex:grperm}, where $B=\{f_{1}\partial_{1}+f_{2}\partial_{2}\mid f_{1},
f_{2}\in\Bbbk[x_{1}^{\pm}, x_{2}^{\pm}]\}=\oplus_{i\in\bz}B_{i}$. Define a skew-symmetric
bilinear form $\omega(-,-)$ on $(B=\oplus_{i\in\bz}B_{i}, \circ)$ by
\begin{align*}
&\omega(x_{1}^{i_{1}}x_{2}^{i_{2}}\partial_{2},\ \ x_{1}^{j_{1}}x_{2}^{j_{2}}\partial_{1})
=-\omega(x_{1}^{j_{1}}x_{2}^{j_{2}}\partial_{1},\ \ x_{1}^{i_{1}}x_{2}^{i_{2}}\partial_{2})
=\delta_{i_{1}+j_{1}, 0}\delta_{i_{2}+j_{2}, 0}, \\
&\qquad\quad\omega(x_{1}^{i_{1}}x_{2}^{i_{2}}\partial_{1},\ \
x_{1}^{j_{1}}x_{2}^{j_{2}}\partial_{1})=\omega(x_{1}^{i_{1}}x_{2}^{i_{2}}\partial_{2},\ \
x_{1}^{j_{1}}x_{2}^{j_{2}}\partial_{2})=0,
\end{align*}
for any $i_{1}, i_{2}, j_{1}, j_{2}\in\bz$.
Then $(B=\oplus_{i\in\bz}B_{i}, \circ, \omega)$ is a quadratic $\bz$-graded perm algebra.
Moreover, $\{x_{1}^{-i_{1}}x_{2}^{-i_{2}}\partial_{2}$,\; $-x_{1}^{-i_{1}}
x_{2}^{-i_{2}}\partial_{1}\mid i_{1}, i_{2}\in\bz\}$ is the dual basis of
$\{x_{1}^{i_{1}}x_{2}^{i_{2}}\partial_{1},\; x_{1}^{i_{1}}x_{2}^{i_{2}}\partial_{2}\mid
i_{1}, i_{2}\in\bz\}$ with respect to $\varpi(-,-)$, consisting of homogeneous elements.
\end{ex}

For a quadratic $\bz$-graded perm algebra $(B=\oplus_{i\in\bz}B_{i}, \circ, \omega)$,
we have $\omega(b_{1}\circ b_{2},\; b_{3})=\omega(b_{2},\; b_{1}\circ b_{3})$
for any $b_{1}, b_{2}, b_{3}\in B$. Moreover, the skew-symmetric nondegenerate
bilinear form $\omega(-,-)$ induces bilinear forms
$$
(\underbrace{B\,\hat{\otimes}\,B\,\hat{\otimes}\,\cdots\,\hat{\otimes}\,
B}_{n\text{-fold}})\otimes(\underbrace{B\otimes B\otimes\cdots
\otimes B}_{n\text{-fold}})\longrightarrow\Bbbk,
$$
for all $n\geq2$, which are denoted by $\hat{\omega}(-,-)$, are defined by
$$
\hat{\omega}\Big(\sum_{i_{1},\cdots,i_{n},\alpha} x_{1, i_{1}, \alpha}
\otimes\cdots\otimes x_{n, i_{n}, \alpha},\ \ y_{1}\otimes\cdots\otimes y_{n}\Big)
=\sum_{i_{1},\cdots,i_{n},\alpha}\prod_{j=1}^{n}
\omega(x_{j, i_{j}, \alpha},\; y_{j}).
$$
Then, one can check that $\hat{\omega}(-,-)$ is {\bf left nondegenerate}, i.e., if
$$
\hat{\omega}\Big(\sum_{i_{1}, \cdots, i_{n},\alpha}x_{1, i_{1}, \alpha}
\otimes\cdots\otimes x_{n, i_{n}, \alpha},\ \ y_{1}\otimes\cdots\otimes y_{n}\Big)
=\hat{\omega}\Big(\sum_{i_{1},\cdots,i_{n},\alpha} z_{1, i_{1}, \alpha}
\otimes\cdots\otimes z_{n, i_{n}, \alpha},\ \ y_{1}\otimes\cdots\otimes y_{n}\Big),
$$
for all homogeneous elements $y_{1}, y_{2},\cdots, y_{n}\in B$, then
$$
\sum_{i_{1},\cdots,i_{n},\alpha} x_{1, i_{1}, \alpha}
\otimes\cdots\otimes x_{n, i_{n}, \alpha}
=\sum_{i_{1},\cdots,i_{n},\alpha} z_{1, i_{1}, \alpha}
\otimes\cdots\otimes z_{n, i_{n}, \alpha}.
$$

By direct calculation, we have

\begin{lem}\label{lem:comp-dual}
Let $(B=\oplus_{i\in\bz}B_{i}, \circ, \omega)$ be a quadratic $\bz$-graded perm algebra.
Define a linear map $\nu_{\omega}: B\rightarrow B\otimes B$ by
$\hat{\omega}(\nu_{\omega}(b_{1}),\; b_{2}\otimes b_{3})
=-\omega(b_{1},\; b_{2}\circ b_{3})$, for any $b_{1}, b_{2}, b_{3}\in B$.
Then $(B, \nu_{\omega})$ is a completed perm coalgebra.
\end{lem}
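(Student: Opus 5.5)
First I check that $\nu_{\omega}$ is well defined as a map $B\rightarrow B\,\hat{\otimes}\,B$. Since $\omega$ is graded, there is $m\in\bz$ with $\omega(B_{i},B_{j})=0$ unless $i+j+m=0$. Nondegeneracy then makes $\omega$ restrict to a perfect pairing $B_{i}\times B_{-i-m}\rightarrow\Bbbk$ between finite-dimensional spaces. Take $b_{1}\in B_{k}$ homogeneous. For each pair $(i,j)$, the functional $b_{2}\otimes b_{3}\mapsto-\omega(b_{1},b_{2}\circ b_{3})$ on $B_{-i-m}\otimes B_{-j-m}$ is therefore represented by a unique element $X_{ij}\in B_{i}\otimes B_{j}$. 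Set $\nu_{\omega}(b_{1})=\sum_{i,j}X_{ij}\in B\,\hat{\otimes}\,B$; by left nondegeneracy of $\hat{\omega}$ this is the unique element with the defining property. Concretely, with a homogeneous basis $\{e_{\alpha}\}$ of $B$ and its $\omega$-dual homogeneous basis $\{e^{\alpha}\}$ (meaning $\omega(e^{\alpha},e_{\beta})=\delta_{\alpha\beta}$), we get
\[
\nu_{\omega}(b_{1})=-\sum_{\alpha,\beta}\omega(b_{1},e_{\alpha}\circ e_{\beta})\,e^{\alpha}\otimes e^{\beta}.
\]
The grading forces $\deg e_{\alpha}+\deg e_{\beta}$ to be fixed by $k$, so only finitely many terms land in each $B_{i}\otimes B_{j}$.

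Next I verify the coassociativity-type identities by pairing with $b_{2}\otimes b_{3}\otimes b_{4}$ via $\hat{\omega}$, and then invoke left nondegeneracy on three-fold tensors. The first step is to compute each side's pairing. Unwinding the definition twice gives
\[
\hat{\omega}\big((\nu_{\omega}\,\hat{\otimes}\,\id)\nu_{\omega}(b_{1}),\,b_{2}\otimes b_{3}\otimes b_{4}\big)=\sum_{\gamma}\omega(b_{1},e_{\gamma}\circ b_{4})\,\omega(e^{\gamma},b_{2}\circ b_{3}).
\]
Here the inner $\nu_{\omega}$ is applied to the components $e^{\gamma}$ of $\nu_{\omega}(b_{1})$, and the two minus signs cancel. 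Expanding $b_{2}\circ b_{3}=\sum_{\gamma}c_{\gamma}e_{\gamma}$ in the dual basis, this sum collapses to a graded-finite expression equal to $\pm\omega(b_{1},(b_{2}\circ b_{3})\circ b_{4})$; I will fix the sign using skew-symmetry, that is, by identifying $\sum_{\gamma}\omega(e^{\gamma},x)e_{\gamma}$ with $\pm x$. The same computation gives
\[
\hat{\omega}\big((\id\,\hat{\otimes}\,\nu_{\omega})\nu_{\omega}(b_{1}),\,b_{2}\otimes b_{3}\otimes b_{4}\big)=\pm\,\omega\big(b_{1},b_{2}\circ(b_{3}\circ b_{4})\big),
\]
and composing with $\hat{\tau}\,\hat{\otimes}\,\id$ swaps $b_{2}$ and $b_{3}$. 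The required equalities then follow from the perm identities $(b_{2}\circ b_{3})\circ b_{4}=b_{2}\circ(b_{3}\circ b_{4})=(b_{3}\circ b_{2})\circ b_{4}$. Note that the invariance of $\omega$ is not even needed at this step. Left nondegeneracy of $\hat{\omega}$, tested on homogeneous $b_{2},b_{3},b_{4}$, then upgrades these equalities of pairings to equalities in $B\,\hat{\otimes}\,B\,\hat{\otimes}\,B$.

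I expect the main obstacle to be bookkeeping rather than anything conceptual. Two points need care. The first is that $(\nu_{\omega}\,\hat{\otimes}\,\id)$ applied to an infinite sum is defined componentwise, so the pairing identity must be justified degree by degree; in each degree only finitely many terms survive, because of the grading shift $m$. The second is getting the sign of the dual-basis expansion right under skew-symmetry. I will do the computation for each fixed triple of degrees, where everything is finite-dimensional and the finite-dimensional duality argument (as in the coalgebra–algebra duality stated earlier) applies verbatim.
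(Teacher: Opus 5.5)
Your proposal is correct and follows the paper's approach. The paper gives no written proof beyond ``by direct calculation'', and your argument is that calculation: pair with homogeneous $b_{2}\otimes b_{3}\otimes b_{4}$ via $\hat{\omega}$, reduce to the perm identities, and conclude by left nondegeneracy, with the grading making $\nu_{\omega}$ well defined as a map into $B\,\hat{\otimes}\,B$ (which fixes the typo $B\otimes B$ in the statement).

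No sign needs fixing. Since $\omega(e^{\gamma},e_{\beta})=\delta_{\gamma\beta}$, one has $\sum_{\gamma}\omega(e^{\gamma},x)e_{\gamma}=x$ exactly. More directly, $\hat{\omega}\big((\nu_{\omega}\,\hat{\otimes}\,\id)\nu_{\omega}(b_{1}),b_{2}\otimes b_{3}\otimes b_{4}\big)=-\hat{\omega}\big(\nu_{\omega}(b_{1}),(b_{2}\circ b_{3})\otimes b_{4}\big)=\omega\big(b_{1},(b_{2}\circ b_{3})\circ b_{4}\big)$. So only nondegeneracy and the grading are used, not skew-symmetry or invariance.
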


\begin{ex}[\cite{LZB}]\label{ex:ind-coperm}
Consider the quadratic $\bz$-graded perm algebra $(B=\oplus_{i\in\bz}B_{i}, \circ, \omega)$
given in Example \ref{ex:qu-perm}. Then the induced completed perm coalgebra
$(B=\oplus_{i\in\bz}B_{i}, \nu_{\omega})$ is just the completed perm coalgebra
$(B=\oplus_{i\in\bz}B_{i}, \nu)$ given in Example \ref{ex:grpermco}.
\end{ex}

Recall that a {\bf Lie bialgebra} is a triple $(\g, [-,-], \delta)$ such that
$(\g, [-,-])$ is a Lie algebra, $(\g, \delta)$ is a Lie coalgebra, and the following
compatibility condition holds:
$$
\delta([g_{1}, g_{2}])=(\ad_{\g}(g_{1})\otimes\id+\id\otimes\ad_{\g}(g_{1}))
(\delta(g_{2}))-(\ad_{\g}(g_{2})\otimes\id+\id\otimes\ad_{\g}(g_{2}))(\delta(g_{1})),
$$
where $\ad_{\g}(g_{1})(g_{2})=[g_{1}, g_{2}]$ for all $g_{1}, g_{2}\in\g$.
An {\bf infinitesimal bialgebra} is a triple $(A, \cdot, \Delta)$, where $(A, \cdot)$
is a commutative associative algebra, $(A, \Delta)$ is a cocommutative
coassociative coalgebra and for any $a_{1}, a_{2}\in A$,
$$
\Delta(a_{1}a_{2})=(\fu_{A}(a_{2})\otimes\id)\Delta(a_{1})
+(\id\otimes\,\fu_{A}(a_{1}))\Delta(a_{2}),
$$
where $\fu_{A}(a_{1})(a_{2})=a_{1}a_{2}$ for all $a_{1}, a_{2}\in A$.

\begin{defi}\label{def:Pbialg}
Let $(P, \cdot, [-,-])$ be a Poisson algebra and $(P, \Delta, \delta)$ be a Poisson
coalgebra. Then the quintuple $(P, \cdot, [-,-], \Delta, \delta)$ is called a
{\bf Poisson bialgebra} if
\begin{enumerate}
\item[$(i)$] $(P, \cdot, \Delta)$ is an infinitesimal bialgebra,
\item[$(ii)$] $(P, [-,-], \delta)$ is a Lie bialgebra, 		
\item[$(iii)$] $\Delta$ and $\delta$ are compatible in the following sense:
     for any $p_{1}, p_{2}\in P$,
\begin{align*}
&\Delta([p_{1}, p_{2}])=\big(\ad_{P}(p_{1})\otimes\id+\id\otimes\ad_{P}(p_{1})
\big)\Delta(p_{2})+\big(\fu_{P}(p_{2})\otimes\id
-\id\otimes\fu_{P}(p_{2})\big)\delta(p_{1}),\\
&\qquad\quad \delta(p_{1}p_{2})=(\fu_{P}(p_{1})\otimes\id)\delta(p_{2})
+(\fu_{P}(p_{2})\otimes\id)\delta(p_{1})\\[-1mm]
&\qquad\qquad\qquad\qquad+(\id\otimes\ad_{P}(p_{1}))\Delta(p_{2})
+(\id\otimes\ad_{P}(p_{2}))\Delta(p_{1}).
\end{align*}		
\end{enumerate}
\end{defi}

We now need introduce the concept of completed DPP bialgebras, which is an
infinite-dimensional version of DPP bialgebra. Completed Lie bialgebra was
introduced in \cite{HBG}. Completed Leibniz bialgebra is a generalized completed
Lie bialgebra \cite{Hou}. Recall that a {\bf completed Leibniz bialgebra}
is a triple $(A=\oplus_{i\in\bz}A_{i}, \ast, \vartheta)$ such that $(A, \ast)$ is a
$\bz$-graded Leibniz algebra, $(A, \vartheta)$ is a completed Leibniz coalgebra,
and for any $a_{1}, a_{2}\in A$,
\begin{align*}
&\qquad\qquad\quad \hat{\tau}((\ffr_{A}(a_{2})\,\hat{\otimes}\,\id)(\vartheta(a_{1})))
=(\ffr_{A}(a_{1})\,\hat{\otimes}\,\id)(\vartheta(a_{2})), \\
&\vartheta(a_{1}\ast a_{2})=(\id\,\hat{\otimes}\,\ffr_{A}(a_{2})
-(\ffl_{A}+\ffr_{A})(a_{2})\,\hat{\otimes}\,\id)((\id+\hat{\tau})
(\vartheta(a_{1})))    \\[-1mm]
&\qquad\qquad\qquad\qquad\qquad\qquad\qquad +(\id\,\hat{\otimes}\,\ffl_{A}(a_{1})
+\ffl_{A}(a_{1})\,\hat{\otimes}\,\id)(\vartheta(a_{2})).
\end{align*}
A {\bf completed perm bialgebra} is a triple $(A=\oplus_{i\in\bz}A_{i}, \circ, \nu)$,
where $(A, \circ)$ is a $\bz$-graded perm algebra, $(A, \nu)$ is a completed perm
coalgebra and for any $a_{1}, a_{2}\in A$,
\begin{align*}
&\qquad\qquad (\fr_{A}(a_{1})\,\hat{\otimes}\,\id)\nu(a_{2})
=\hat{\tau}((\fr_{A}(a_{2})\,\hat{\otimes}\,\id)\nu(a_{1})),   \\
&\qquad \nu(a_{1}\circ a_{2})=((\fl_{A}-\fr_{A})(a_{1})\,\hat{\otimes}\,\id)\nu(a_{2})
+(\id\,\hat{\otimes}\,\fr_{A}(a_{2}))\nu(a_{1}),  \\
&\nu(a_{1}\circ a_{2})=(\id\,\hat{\otimes}\,\fl_{A}(a_{1}))\nu(a_{2})
+((\fl_{A}-\fr_{A})(a_{2})\,\hat{\otimes}\,\id)(\nu(a_{1})-\hat{\tau}(\nu(a_{1})).
\end{align*}

\begin{defi}\label{def:CASIbia}
A {\bf completed DPP bialgebra} is a quintuple $(A=\oplus_{i\in\bz}A_{i},
\circ, \ast, \nu, \vartheta)$, where $(A, \circ, \ast)$ is a $\bz$-graded DPP algebra,
$(A, \nu, \vartheta)$ is a completed DPP coalgebra, $(A, \circ, \nu)$ is a
completed perm bialgebra, $(A, \ast, \vartheta)$ is a completed Leibniz bialgebra
and the following compatibility conditions hold:
{\small\begin{align}
&\nu(a_{1}\ast a_{2})=(\id\,\hat{\otimes}\,\ffl_{A}(a_{1})
+\ffl_{A}(a_{1})\,\hat{\otimes}\,\id)(\nu(a_{2}))
+((\fl_{A}-\fr_{A})(a_{2})\,\hat{\otimes}\,\id-\id\,\hat{\otimes}\,\fr_{A}(a_{2}))
((\vartheta+\hat{\tau}(\vartheta))(a_{1})),         \label{cdpbi1} \\
&\qquad \vartheta(a_{1}\circ a_{2})=(\id\,\hat{\otimes}\,\fl_{A}(a_{1}))(\vartheta(a_{2}))
+(\id\,\hat{\otimes}\,\fr_{A}(a_{2}))(\vartheta(a_{1}))
-((\ffl_{A}+\ffr_{A})(a_{1})\,\hat{\otimes}\,\id)(\nu(a_{2}))   \label{cdpbi2} \\[-1mm]
&\qquad\qquad\qquad\qquad-((\ffl_{A}+\ffr_{A})(a_{2})\,\hat{\otimes}\,\id )
((\nu-\hat{\tau}(\nu))(a_{1})),      \nonumber\\
&\qquad\qquad\qquad\qquad(\id\,\hat{\otimes}\,\ffr_{A}(a_{1}))(\hat{\tau}(\nu(a_{2})))
=-(\fr_{A}(a_{2})\,\hat{\otimes}\,\id)(\vartheta(a_{1})),    \label{cdpbi3}\\
&\nu(a_{1}\ast a_{2})=(\id\,\hat{\otimes}\,\fl_{A}(a_{1})
-\fl_{A}(a_{1})\,\hat{\otimes}\,\id)(\vartheta(a_{2}))
+(\id\,\hat{\otimes}\,\ffr_{A}(a_{2})-(\ffl_{A}+\ffr_{A})(a_{2})\,\hat{\otimes}\,\id)
((\nu-\hat{\tau}(\nu))(a_{1})),       \label{cdpbi4} \\
&\qquad(\vartheta+\hat{\tau}(\vartheta))(a_{1}\circ a_{2})
=(\id\,\hat{\otimes}\,\fl_{A}(a_{1}))((\vartheta+\hat{\tau}(\vartheta))(a_{2}))
+(\id\,\hat{\otimes}\,\fl_{A}(a_{2}))((\vartheta+\hat{\tau}(\vartheta))(a_{1}))
\label{cdpbi5} \\[-1mm]
&\qquad\qquad\qquad\qquad\qquad\qquad-(\ffl_{A}(a_{1})\,\hat{\otimes}\,\id)
((\nu-\hat{\tau}(\nu))(a_{2}))-(\ffl_{A}(a_{2})\,\hat{\otimes}\,\id)
((\nu-\hat{\tau}(\nu))(a_{1})),           \nonumber\\
&\qquad\qquad\vartheta(a_{1}\circ a_{2})=(\fl_{A}(a_{1})\,\hat{\otimes}\,\id)(\vartheta(a_{2}))
-(\id\,\hat{\otimes}\,\ffr_{A}(a_{2}))((\nu-\hat{\tau}(\nu))(a_{1}))
\label{cdpbi6} \\[-1mm]
&\qquad\qquad\qquad\qquad+(\id\,\hat{\otimes}\,\ffl_{A}(a_{1}))(\nu(a_{2}))
+((\fl_{A}-\fr_{A})(a_{2})\,\hat{\otimes}\,\id)
((\vartheta+\hat{\tau}(\vartheta))(a_{1})), \nonumber\\
&\qquad\nu(a_{1}\ast a_{2}+a_{2}\ast a_{1})
=(\id\,\hat{\otimes}\,(\ffl_{A}+\ffr_{A})(a_{1}))(\nu(a_{2}))
+((\ffl_{A}+\ffr_{A})(a_{1})\,\hat{\otimes}\,\id)(\hat{\tau}(\nu(a_{2})))
\label{cdpbi7} \\[-1mm]
&\qquad\qquad\qquad\qquad\qquad\qquad+(\id\,\hat{\otimes}\,(\fl_{A}-\fr_{A})(a_{2}))
(\vartheta(a_{1}))+((\fl_{A}-\fr_{A})(a_{2})\,\hat{\otimes}\,\id)(\hat{\tau}
(\vartheta(a_{1}))), \nonumber
\end{align}}
for any $a_{1}, a_{2}\in A$.
\end{defi}

In \cite[Theorem 4.12]{Hou}, we have shown that there is a completed Leibniz bialgebra
structure on the tensor product of a Lie bialgebra and a quadratic
$\bz$-graded perm algebra.

\begin{lem}[\cite{Hou}]\label{lem:Lie+perm=L}
Let $(\g, [-,-], \delta)$ be a finite-dimensional Lie bialgebra, $(B=\oplus_{i\in\bz}B_{i},
\circ_{B}, \omega)$ be a quadratic $\bz$-graded perm algebra and $(\g\otimes B, \ast)$ be
the induced $\bz$-graded Leibniz algebra from $(\g, [-,-])$ by $(B, \circ_{B})$. Define
a linear map $\vartheta: \g\otimes B\rightarrow(\g\otimes B)\otimes(\g\otimes B)$ by
\begin{align}
\vartheta(g\otimes b)=\delta(g)\bullet\nu_{\omega}(b)
:=\sum_{(g)}\sum_{i,j,\alpha}(g_{(1)}\otimes b_{1,i,\alpha})
\otimes(g_{(2)}\otimes b_{2,j,\alpha}), \label{indcoL}
\end{align}
for any $g\in\g$ and $b\in B$, where $\delta(g)=\sum_{(g)}g_{(1)}
\otimes g_{(2)}$ in the Sweedler notation and $\nu_{\omega}(b)=\sum_{i,j,\alpha}
b_{1,i,\alpha}\otimes b_{2,j,\alpha}$. Then $(\g\otimes B, \ast, \vartheta)$ is a
completed Leibniz bialgebra, which is called the {\bf completed Leibniz bialgebra
induced from $(\g, [-,-], \delta)$ by $(B, \circ_{B}, \omega)$}.
\end{lem}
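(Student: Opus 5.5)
The plan is to reduce the statement to three ingredients already available: the Leibniz algebra $(\g\otimes B,\ast)$ with $(g_1\otimes b_1)\ast(g_2\otimes b_2)=[g_1,g_2]\otimes(b_1\circ_B b_2)$ (the Lie part of Proposition \ref{pro:aff-poi}), the completed perm coalgebra $(B,\nu_\omega)$ of Lemma \ref{lem:comp-dual}, and the Lie bialgebra axioms of $(\g,[-,-],\delta)$. First I would show that $(\g\otimes B,\vartheta)$ is a completed Leibniz coalgebra. This is done exactly as in the proof of Proposition \ref{pro:coperm-copoi}: factor each of $(\vartheta\,\hat{\otimes}\,\id)\vartheta$, $(\id\,\hat{\otimes}\,\vartheta)\vartheta$ and $(\id\,\hat{\otimes}\,\hat{\tau})(\vartheta\,\hat{\otimes}\,\id)\vartheta$ as a $\bullet$-product of a $\g^{\otimes 3}$-part and a $B^{\hat{\otimes}3}$-part. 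The perm coalgebra identities then put all $B$-parts in the common form $(\nu_\omega\,\hat{\otimes}\,\id)\nu_\omega(b)$, up to the twist $\hat\tau\,\hat{\otimes}\,\id$. The required identity thus reduces to the co-Jacobi identity $(\delta\otimes\id)\delta=(\id\otimes\delta)\delta-(\tau\otimes\id)(\id\otimes\delta)\delta$ together with $\tau\delta=-\delta$.

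Next I would verify the two compatibility conditions of a completed Leibniz bialgebra. Rather than expanding infinite sums directly, I will use the left nondegeneracy of $\hat{\omega}$: it suffices to pair both sides with $(g'\otimes y_1)\otimes(g''\otimes y_2)$ for homogeneous $y_1,y_2$, where $g'$, $g''$ range over a basis of $\g^{\ast}$ (equivalently, take the $\g$-component coefficientwise). By definition, $\hat\omega(\nu_\omega(b),y_1\otimes y_2)=-\omega(b,y_1\circ y_2)$. Actions such as $\ffr_A(a)\,\hat{\otimes}\,\id$ are moved across $\omega$ using invariance in the form $\omega(b_1\circ b_2,b_3)=\omega(b_2,b_1\circ b_3)$ and $\omega(b_1\circ b_2,b_3)=\omega(b_1,b_2\circ b_3-b_3\circ b_2)$, together with skew-symmetry. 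After this, every term becomes a $\g$-tensor (built from $\delta$, $\ad$ and $\tau$) multiplied by a scalar $\omega(b_1,\text{cubic perm word in } b_2,y_1,y_2)$.

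The perm axioms show that the only cubic words occurring are of the types $(b_2\circ y_1)\circ y_2=(y_1\circ b_2)\circ y_2$, $y_1\circ(y_2\circ b_2)$ and $y_2\circ(y_1\circ b_2)$. I will group the terms by these words. For the symmetry condition $\hat{\tau}((\ffr(a_2)\,\hat{\otimes}\,\id)\vartheta(a_1))=(\ffr(a_1)\,\hat{\otimes}\,\id)\vartheta(a_2)$, the $\g$-side reduces to an identity that holds in any Lie coalgebra with $\tau\delta=-\delta$, and the $B$-side to $(b\circ y)\circ y'=(y\circ b)\circ y'$. For the main condition on $\vartheta(a_1\ast a_2)$, collecting coefficients of each independent word should reproduce the Lie bialgebra cocycle identity for $\delta([g_1,g_2])$, possibly combined with $\tau\delta=-\delta$.

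The main obstacle is this last bookkeeping step. The right-hand side contains $(\id+\hat\tau)\vartheta(a_1)$, and its $\g$-part $(\id+\tau)\delta(g_1)$ vanishes. So the terms carrying $\delta(g_1)$ must come instead from the $B$-side antisymmetry of $\nu_\omega-\hat\tau\nu_\omega$ interacting with $\tau\delta=-\delta$. I would first test the identity in the special case where $\g$ is one-dimensional, which isolates the required relation among the $\omega$-words, before assembling the general case. Where a word-independence argument is needed, I would avoid it by directly matching terms after rewriting with the perm identities.
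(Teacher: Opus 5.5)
Your strategy is the right one, but two steps do not go through as written. The paper itself gives no proof here (it cites \cite{Hou}). Its proofs of Proposition \ref{pro:coperm-copoi} and Theorem \ref{thm:P+perm=dP} follow exactly your pattern of $\bullet$-factorization plus left nondegeneracy of $\hat{\omega}$.

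The first problem is the coalgebra step. Put $N=(\nu_{\omega}\,\hat{\otimes}\,\id)\nu_{\omega}(b)$. Of the three terms you list, $(\id\,\hat{\otimes}\,\hat{\tau})(\vartheta\,\hat{\otimes}\,\id)\vartheta(g\otimes b)$ has $B$-factor $(\id\,\hat{\otimes}\,\hat{\tau})N$. The perm-coalgebra axioms only make $N$ symmetric in its first two slots. Indeed $\hat{\omega}(N,e\otimes f\otimes h)=\omega(b,(e\circ_{B}f)\circ_{B}h)$, which is not symmetric in $f,h$: in Example \ref{ex:ind-tri}, $b=e=f=x_{1}$, $h=x_{2}$ gives $1$ versus $0$. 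Using co-Jacobi and $\tau\delta=-\delta$, the identity displayed in Definition \ref{def:Pd-coa} reduces to $\big((\id\otimes\tau)(\delta\otimes\id)\delta(g)\big)\bullet\big(N-(\id\,\hat{\otimes}\,\hat{\tau})N\big)=0$. This fails, e.g., for $\g=\Bbbk\{u,v\}$ abelian with $\delta(u)=0$, $\delta(v)=u\otimes v-v\otimes u$. What your reduction actually proves is
\[
(\id\,\hat{\otimes}\,\vartheta)\vartheta=(\vartheta\,\hat{\otimes}\,\id)\vartheta+(\hat{\tau}\,\hat{\otimes}\,\id)(\id\,\hat{\otimes}\,\vartheta)\vartheta,
\]
the transpose of the \emph{left} Leibniz identity obeyed by $\ast$. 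That is the axiom to check; the displayed one has the twist in the wrong slot. You should say explicitly which identity you verify.

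The second problem is the compatibility condition for $\vartheta(a_{1}\ast a_{2})$, which you leave as bookkeeping that ``should'' work. The key idea is missing: first prove, by left nondegeneracy of $\hat{\omega}$ exactly as in the proof of Theorem \ref{thm:P+perm=dP},
\begin{align*}
\nu_{\omega}(b_{1}\circ_{B}b_{2})
&=(\id\,\hat{\otimes}\,\fr_{B}(b_{2}))\nu_{\omega}(b_{1})
=(\id\,\hat{\otimes}\,\fl_{B}(b_{1}))\nu_{\omega}(b_{2})
=(\fl_{B}(b_{1})\,\hat{\otimes}\,\id)\nu_{\omega}(b_{2})\\
&=((\fl_{B}-\fr_{B})(b_{2})\,\hat{\otimes}\,\id)(\nu_{\omega}-\hat{\tau}\nu_{\omega})(b_{1}),\\
0&=(\fr_{B}(b_{2})\,\hat{\otimes}\,\id)\nu_{\omega}(b_{1})
=(\id\,\hat{\otimes}\,\fr_{B}(b_{2}))\hat{\tau}\nu_{\omega}(b_{1}).
\end{align*}
Three facts then finish the job:
\begin{itemize}
\item $(\id+\hat{\tau})\vartheta(g_{1}\otimes b_{1})=\delta(g_{1})\bullet(\nu_{\omega}-\hat{\tau}\nu_{\omega})(b_{1})$; this is the only place $\tau\delta=-\delta$ enters.
\item $\ffl_{\g\otimes B}(g\otimes b)=\ad_{\g}(g)\otimes\fl_{B}(b)$.
\item $\ffr_{\g\otimes B}(g\otimes b)=-\ad_{\g}(g)\otimes\fr_{B}(b)$.
\end{itemize}
With these, every term on the right-hand side becomes a $\g$-tensor $\bullet$-multiplied by $\nu_{\omega}(b_{1}\circ_{B}b_{2})$. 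The condition is then literally the Lie bialgebra cocycle identity for $\delta([g_{1},g_{2}])$, tensored with $\nu_{\omega}(b_{1}\circ_{B}b_{2})$. Both sides of the symmetry condition vanish outright, since they contain $(\fr_{B}(b)\,\hat{\otimes}\,\id)\nu_{\omega}(b')$. So there is a single $B$-scalar, not several words to balance.

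Your word inventory is also inaccurate. A cubic perm word depends only on its last letter, so $y_{1}\circ(y_{2}\circ b_{2})=y_{2}\circ(y_{1}\circ b_{2})$. If you treated these as separate types, coefficient matching would fail, since they come from different terms with different $\g$-coefficients. Conversely, the class $(b_{2}\circ y_{2})\circ y_{1}$, which you omit, does occur in the $\hat{\tau}\nu_{\omega}(b_{1})$ terms and cancels there.

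Finally, the one-dimensional test is vacuous: a one-dimensional Lie bialgebra has zero bracket and zero cobracket.
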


Similarly, we have

\begin{lem}\label{lem:ass+perm=p}
Let $(A, \cdot, \Delta)$ be a finite-dimensional infinitesimal bialgebra,
$(B=\oplus_{i\in\bz}B_{i}, \circ_{B}, \omega)$ be a quadratic $\bz$-graded perm algebra
and $(A\otimes B, \circ)$ be the induced $\bz$-graded perm algebra from $(A, \cdot)$
by $(B, \circ_{B})$. Define a linear map $\nu: A\otimes B\rightarrow(A\otimes B)
\otimes(A\otimes B)$ by
\begin{align}
\nu(a\otimes b)=\Delta(a)\bullet\nu_{\omega}(b)
:=\sum_{(a)}\sum_{i,j,\alpha}(a_{(1)}\otimes b_{1,i,\alpha})
\otimes(a_{(2)}\otimes b_{2,j,\alpha}),  \label{indcoP}
\end{align}
for any $a\in A$ and $b\in B$, where $\Delta(a)=\sum_{(a)}a_{(1)}\otimes a_{(2)}$
in the Sweedler notation and $\nu_{\omega}(b)=\sum_{i,j,\alpha}
b_{1,i,\alpha}\otimes b_{2,j,\alpha}$. Then $(A\otimes B, \circ, \nu)$ is a
completed perm bialgebra, which is called the {\bf completed perm bialgebra
induced from $(A, \cdot, \Delta)$ by $(B, \circ_{B}, \omega)$}.
\end{lem}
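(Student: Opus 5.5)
Three things have to be checked. First, $(A\otimes B,\circ)$ must be a $\bz$-graded perm algebra; this is the perm part of Proposition~\ref{pro:aff-poi}, where only the multiplication $\cdot$ is used. Second, $(A\otimes B,\nu)$ must be a completed perm coalgebra. Third, the three compatibility identities of a completed perm bialgebra must hold.

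For the coalgebra part I would follow the first half of the proof of Proposition~\ref{pro:coperm-copoi}. By Lemma~\ref{lem:comp-dual}, $(B,\nu_{\omega})$ is a completed perm coalgebra, and coassociativity and cocommutativity of $\Delta$ give
$$(\nu\,\hat{\otimes}\,\id)\nu(a\otimes b)=\big((\Delta\otimes\id)\Delta(a)\big)\bullet\big((\nu_{\omega}\,\hat{\otimes}\,\id)\nu_{\omega}(b)\big).$$
The other two composites factor the same way, and the permutation $\tau\otimes\id$ is absorbed on the $A$-side because $\tau\Delta=\Delta$.

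For the three compatibility identities I would work on pure tensors $a_{1}\otimes b_{1}$ and $a_{2}\otimes b_{2}$, pairing the $B$-legs with $\hat{\omega}$ against arbitrary homogeneous $y_{1}\otimes y_{2}$ and using left nondegeneracy of $\hat{\omega}$. Each term then splits as (a tensor in $A\otimes A$)$\,\bullet\,$(a tensor in $B\,\hat{\otimes}\,B$). On the $A$-side, $\fl_{A\otimes B}$ and $\fr_{A\otimes B}$ both act through $\fu_{A}$, since $\cdot$ is commutative. On the $B$-side, the defining relation of $\nu_{\omega}$, the identity $\omega(b_{1}\circ b_{2},b_{3})=\omega(b_{2},b_{1}\circ b_{3})$, invariance, and the perm axioms turn each term into an explicit trilinear expression in $b_{1},b_{2},y_{1},y_{2}$.

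For example, the first identity $(\fr(a_{1})\hat\otimes\id)\nu(a_{2})=\hat\tau((\fr(a_{2})\hat\otimes\id)\nu(a_{1}))$ reduces, on the $A$-side, to $(\fu(a_{1})\otimes\id)\Delta(a_{2})=\tau(\fu(a_{2})\otimes\id)\Delta(a_{1})$. This follows from the infinitesimal bialgebra axiom applied to $a_{1}a_{2}=a_{2}a_{1}$ together with cocommutativity. The $B$-side becomes the corresponding symmetric identity for $\nu_{\omega}$, which is a consequence of invariance and the perm axioms.

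For the other two identities, with $\nu(a_{1}\circ a_{2})=\Delta(a_{1}a_{2})\bullet\nu_{\omega}(b_{1}\circ b_{2})$, I would expand $\Delta(a_{1}a_{2})$ by the infinitesimal bialgebra identity and compare term by term. This is done most cleanly by noting that this construction is exactly the ``associative half'' of the Leibniz argument in Lemma~\ref{lem:Lie+perm=L} (\cite[Theorem 4.12]{Hou}). There the Lie bialgebra cocycle condition plays the role that the infinitesimal compatibility plays here, and the $B$-side identities for $\nu_{\omega}$ are identical because they involve only $(B,\circ_{B},\omega)$. So I would transcribe that computation, replacing $\ad_{\g}$ by $\fu_{A}$ and dropping the skew-symmetry signs.

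The main obstacle is the third identity, which contains $\nu(a_{1})-\hat\tau\nu(a_{1})$. Cocommutativity reduces it to $\Delta(a_{1})\bullet(\nu_{\omega}-\hat\tau\nu_{\omega})(b_{1})$. The resulting $B$-identity,
$$\nu_{\omega}(b_{1}\circ b_{2})=(\id\,\hat\otimes\,\fl(b_{1}))\nu_{\omega}(b_{2})+((\fl-\fr)(b_{2})\,\hat\otimes\,\id)(\id-\hat\tau)\nu_{\omega}(b_{1}),$$
has to be verified by pairing with $y_{1}\otimes y_{2}$ and repeatedly applying invariance and the perm identities. On the $A$-side, the two pieces $\Delta(a_{1})$ and $\Delta(a_{2})$ must combine into $\Delta(a_{1}a_{2})$ via the infinitesimal bialgebra rule. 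I expect some care with signs coming from the skew-symmetry of $\omega$.
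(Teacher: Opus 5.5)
Your toolkit is the right one: pair the $B$-legs with $\hat{\omega}$, use left nondegeneracy, and use the fact that $\fl_{A\otimes B}$ and $\fr_{A\otimes B}$ act on the $A$-leg by $\fu_{A}$. The graded perm algebra and coalgebra parts are also fine. There is, however, a genuine gap. The mechanism you give for the compatibility identities is invalid, and two of the intermediate identities you plan to verify are false.

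First, an equality $X_{1}\bullet Y_{1}+X_{2}\bullet Y_{2}=X\bullet Y$ cannot be obtained by checking $X_{1}+X_{2}=X$ on the $A$-side and $Y_{1}+Y_{2}=Y$ on the $B$-side separately.

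Second, your displayed $B$-identity for the third axiom is false. Pairing with $u\otimes v$ shows that $(\id\,\hat{\otimes}\,\fl_{B}(b_{1}))\nu_{\omega}(b_{2})$ and $((\fl_{B}-\fr_{B})(b_{2})\,\hat{\otimes}\,\id)(\id-\hat{\tau})\nu_{\omega}(b_{1})$ are each equal to $\nu_{\omega}(b_{1}\circ_{B}b_{2})$ on their own. Your right-hand side is therefore $2\nu_{\omega}(b_{1}\circ_{B}b_{2})$, which is generally nonzero. For instance, in the quadratic perm algebra of Example~\ref{ex:ind-tri} one has $\nu_{\omega}(x_{1})=x_{2}\otimes x_{1}$ and $x_{1}\circ_{B}x_{1}=x_{1}$.

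Third, for the first axiom, the $A$-side identity $(\fu_{A}(a_{1})\otimes\id)\Delta(a_{2})=\tau(\fu_{A}(a_{2})\otimes\id)\Delta(a_{1})$ does not follow from the infinitesimal bialgebra axioms. Take $A=\Bbbk[\epsilon]/(\epsilon^{2})$ with $\Delta(1)=0$ and $\Delta(\epsilon)=\epsilon\otimes\epsilon$, which is an infinitesimal bialgebra in the paper's sense. With $a_{1}=1$ and $a_{2}=\epsilon$, the left side is $\epsilon\otimes\epsilon$ and the right side is $0$.

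The missing idea is that every $B$-factor collapses either to one common tensor or to zero. This is what the paper's ``Similarly'' relies on, and the same computation is written out for Eq.~\eqref{cdpbi1} in the proof of Theorem~\ref{thm:P+perm=dP}:
\begin{align*}
\nu_{\omega}(b_{1}\circ_{B}b_{2})&=(\id\,\hat{\otimes}\,\fr_{B}(b_{2}))\nu_{\omega}(b_{1})
=(\fl_{B}(b_{1})\,\hat{\otimes}\,\id)\nu_{\omega}(b_{2})
=(\id\,\hat{\otimes}\,\fl_{B}(b_{1}))\nu_{\omega}(b_{2})\\
&=((\fl_{B}-\fr_{B})(b_{2})\,\hat{\otimes}\,\id)(\id-\hat{\tau})\nu_{\omega}(b_{1}),
\qquad (\fr_{B}(b)\,\hat{\otimes}\,\id)\nu_{\omega}(b')=0.
\end{align*}
Each equality is a short check: pair with $u\otimes v$ and use $\omega(x\circ_{B}y,z)=\omega(y,x\circ_{B}z)$, invariance and the perm identities. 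For the last one, the pairing is $-\omega(b',(b\circ_{B}u-u\circ_{B}b)\circ_{B}v)=0$.

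With these identities the proof closes as follows.
\begin{itemize}
\item The first axiom reads $0=0$ on both sides, so it needs no $A$-side input. Your claimed $B$-side identity there is true only because both sides vanish.
\item The second axiom becomes $\big(\Delta(a_{1}a_{2})-(\fu_{A}(a_{1})\otimes\id)\Delta(a_{2})-(\id\otimes\fu_{A}(a_{2}))\Delta(a_{1})\big)\bullet\nu_{\omega}(b_{1}\circ_{B}b_{2})=0$. This is the infinitesimal axiom with $a_{1}$ and $a_{2}$ interchanged via commutativity.
\item The third axiom, after your cocommutativity reduction of $\nu-\hat{\tau}\nu$, becomes the same expression with $(\id\otimes\fu_{A}(a_{1}))\Delta(a_{2})+(\fu_{A}(a_{2})\otimes\id)\Delta(a_{1})$. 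This is the infinitesimal axiom verbatim.
\end{itemize}
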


\begin{rmk}\label{rmk:ass}
As is well known, the tensor product of a commutative associative algebra and
a Lie (resp. perm, Leibniz, etc.) algebra also has a Lie (resp. perm, Leibniz, etc.)
algebra structure. In fact, the tensor product of infinitesimal bialgebra and many types
of bialgebra structures still maintains this bialgebra structure. More precisely, there
is Lie (resp. perm, Leibniz, etc.) bialgebra structure on the tensor product of
infinitesimal bialgebra and a quadratic Lie (resp. perm, Leibniz, etc.) algebra.
\end{rmk}

Therefore, we can get the first main conclusion of this section.

\begin{thm}\label{thm:P+perm=dP}
Let $(P, \cdot, [-,-], \Delta, \delta)$ be a finite-dimensional Poisson bialgebra,
$(B=\oplus_{i\in\bz}B_{i}, \circ_{B}, \omega)$ be a quadratic $\bz$-graded perm algebra
and $(P\otimes B, \circ, \ast)$ be the induced $\bz$-graded DPP algebra by
$(P, \cdot, [-,-])$ and $(B, \circ_{B})$. Define two linear maps $\nu, \vartheta:
P\otimes B\rightarrow(P\otimes B)\otimes(P\otimes B)$ by Eqs. \eqref{indcoP} and
\eqref{indcoL} respectively. Then $(P\otimes B, \circ, \ast, \nu, \vartheta)$ is a
completed DPP bialgebra, which is called the {\bf completed DPP bialgebra
induced from $(P, \cdot, [-,-], \Delta, \delta)$ by $(B, \circ_{B}, \omega)$}.
\end{thm}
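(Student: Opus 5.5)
The plan is to check the requirements of Definition~\ref{def:CASIbia} one at a time. Most of them reduce to results already stated; only the seven mixed compatibility conditions need a new computation.

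\emph{Reduction to known results.} By Proposition~\ref{pro:aff-poi}, $(P\otimes B,\circ,\ast)$ is a $\bz$-graded DPP algebra. By Lemma~\ref{lem:comp-dual}, $(B,\nu_{\omega})$ is a completed perm coalgebra. Since $(P,\Delta,\delta)$ is a Poisson coalgebra, Proposition~\ref{pro:coperm-copoi} with $\nu_{B}=\nu_{\omega}$ shows that $(P\otimes B,\nu,\vartheta)$ is a completed DPP coalgebra. The pair $(P,\cdot,\Delta)$ is an infinitesimal bialgebra, so Lemma~\ref{lem:ass+perm=p} makes $(P\otimes B,\circ,\nu)$ a completed perm bialgebra. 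The pair $(P,[-,-],\delta)$ is a Lie bialgebra, so Lemma~\ref{lem:Lie+perm=L} makes $(P\otimes B,\ast,\vartheta)$ a completed Leibniz bialgebra.

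\emph{The mixed conditions.} It remains to verify \eqref{cdpbi1}--\eqref{cdpbi7}. Each side of these identities lies in $(P\otimes B)\hat{\otimes}(P\otimes B)$, and I will test equality by pairing with arbitrary homogeneous $q_{1}\otimes b'\otimes q_{2}\otimes b''$, using the left nondegeneracy of $\hat{\omega}$ in the $B$-factors. Take $a_{1}=p_{1}\otimes b_{1}$ and $a_{2}=p_{2}\otimes b_{2}$.

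Every term then factors as a $P$-part times a $B$-part. The $P$-parts are exactly the terms appearing in the compatibility conditions of Definition~\ref{def:Pbialg}(iii), in the Leibniz rule, and in the infinitesimal and Lie bialgebra identities for $P$. The $B$-parts are scalars of the form $\omega(b_{i},\, x\circ y)$, $\omega(b_{i}\circ x,\, y)$, and so on. Here I will use the key identity $\hat{\omega}(\nu_{\omega}(b_{1}\circ b_{2}),\, x\otimes y)=-\omega(b_{1}\circ b_{2},\, x\circ y)$, together with invariance rewritten as $\omega(b_{1}\circ b_{2},b_{3})=\omega(b_{2},b_{1}\circ b_{3})$ and skew-symmetry. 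With these, each $B$-scalar can be expressed through the basic quantities $\omega(b_{1},\, b_{2}\circ(x\circ y))$ and their permutations. The perm identities $(u\circ v)\circ w=(v\circ u)\circ w=u\circ(v\circ w)$ and $u\circ(v\circ w)=v\circ(u\circ w)$ then identify these quantities with one another up to sign.

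After this, each of \eqref{cdpbi1}--\eqref{cdpbi7} collapses to a linear combination of the $P$-identities, with all terms sharing a common $B$-coefficient. Condition \eqref{cdpbi1} (and \eqref{cdpbi4}) reduces to the $\Delta([p_{1},p_{2}])$ compatibility. Conditions \eqref{cdpbi2} and \eqref{cdpbi6} reduce to the $\delta(p_{1}p_{2})$ compatibility. For conditions \eqref{cdpbi3}, \eqref{cdpbi5} and \eqref{cdpbi7}, the symmetrizations $\vartheta+\hat\tau\vartheta$ and $\nu-\hat\tau\nu$ produce $\delta+\tau\delta=0$ and $\Delta-\tau\Delta=0$ in the $P$-factor. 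These terms are therefore controlled by cocommutativity of $\Delta$ and antisymmetry of $\delta$, combined with the $B$-side symmetry $\hat{\tau}$-behaviour of $\nu_{\omega}$.

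\emph{Main obstacle.} The hardest step is the bookkeeping of the $B$-scalars in the mixed identities. The operators $\fl_{A}-\fr_{A}$ and $\ffl_{A}+\ffr_{A}$ act in the $B$-factor as $b\circ(-)-(-)\circ b$ and as symmetrized products. One must check that, after the invariance manipulations, every term carries the \emph{same} $B$-coefficient, or coefficients related by perm identities, so that the $P$-identity can be factored out. I would first tabulate the $\hat{\omega}$-pairing of each elementary term, such as $(\id\hat\otimes \fl(a_{1}))\nu(a_{2})$ and $(\fr(a_{2})\hat\otimes\id)\vartheta(a_{1})$, once, and then assemble each condition from this table. This mirrors the proofs of Lemmas~\ref{lem:Lie+perm=L} and~\ref{lem:ass+perm=p} in \cite{Hou}.
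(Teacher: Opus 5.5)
Your reduction step and your overall method match the paper's proof. You reduce to \eqref{cdpbi1}--\eqref{cdpbi7} via Propositions \ref{pro:aff-poi}, \ref{pro:coperm-copoi} and Lemmas \ref{lem:comp-dual}, \ref{lem:Lie+perm=L}, \ref{lem:ass+perm=p}. You then write every term as a $P$-part $\bullet$ a $B$-part, identify the $B$-parts up to sign by left nondegeneracy of $\hat{\omega}$, and cancel the $P$-parts by the Poisson bialgebra axioms. The paper writes this out only for \eqref{cdpbi1}, where all $B$-parts become $\nu_{\omega}(b_{1}\circ_{B}b_{2})$, and says ``similarly'' for the rest. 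Your assignments are correct for \eqref{cdpbi1} and \eqref{cdpbi4}, which go to the $\Delta([p_{1},p_{2}])$ compatibility (after swapping $p_{1},p_{2}$ for \eqref{cdpbi4}). They are also correct for \eqref{cdpbi2} and \eqref{cdpbi6}, which go to the $\delta(p_{1}p_{2})$ compatibility (after applying $-\tau$ for \eqref{cdpbi2}).

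Your sentence about \eqref{cdpbi3}, \eqref{cdpbi5}, \eqref{cdpbi7} is wrong, and taken literally it would not close those cases. Since $\hat{\tau}$ also swaps the $B$-factors, $\hat{\tau}(Y\bullet X)=\tau(Y)\bullet\hat{\tau}(X)$. Hence $(\vartheta+\hat{\tau}\vartheta)(p\otimes b)=\delta(p)\bullet(\nu_{\omega}-\hat{\tau}\nu_{\omega})(b)$ and $(\nu-\hat{\tau}\nu)(p\otimes b)=\Delta(p)\bullet(\nu_{\omega}-\hat{\tau}\nu_{\omega})(b)$. No factor $\delta+\tau\delta$ or $\Delta-\tau\Delta$ appears, and these terms do not vanish; if they did, the $\delta(p_{1})$-terms you need in \eqref{cdpbi1} would disappear. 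Here is what actually happens in each case.

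\eqref{cdpbi3} (which has no symmetrization at all): both sides vanish for purely $B$-side reasons. The elements $(\fr_{B}(b_{2})\,\hat{\otimes}\,\id)\nu_{\omega}(b_{1})$ and $(\id\,\hat{\otimes}\,\fr_{B}(b_{1}))\hat{\tau}(\nu_{\omega}(b_{2}))$ pair with $e\otimes f$ to $-\omega(b_{1},(b_{2}\circ_{B}e-e\circ_{B}b_{2})\circ_{B}f)$ and $-\omega(b_{2},(b_{1}\circ_{B}f-f\circ_{B}b_{1})\circ_{B}e)$. Both are zero by $(u\circ v)\circ w=(v\circ u)\circ w$.

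\eqref{cdpbi5}: every $B$-part equals $(\nu_{\omega}-\hat{\tau}\nu_{\omega})(b_{1}\circ_{B}b_{2})$. Matching the $b_{1}$- and $b_{2}$-indexed terms uses $\omega(b_{1}\circ_{B}b_{2}-b_{2}\circ_{B}b_{1},\,e\circ_{B}f-f\circ_{B}e)=0$. The remaining $P$-identity is the image under $-\tau$ of the $\delta(p_{1}p_{2})$ compatibility, i.e.\ the same one as for \eqref{cdpbi2}.

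\eqref{cdpbi7}: here $a_{1}\ast a_{2}+a_{2}\ast a_{1}=[p_{1},p_{2}]\otimes(b_{1}\circ_{B}b_{2}-b_{2}\circ_{B}b_{1})$. Put $Z:=\nu_{\omega}(b_{1}\circ_{B}b_{2}-b_{2}\circ_{B}b_{1})$. The $B$-parts of the two $\Delta(p_{2})$-terms equal $Z$. The $B$-parts of the two $\delta(p_{1})$-terms, $(\id\,\hat{\otimes}\,(\fl_{B}-\fr_{B})(b_{2}))\nu_{\omega}(b_{1})$ and $((\fl_{B}-\fr_{B})(b_{2})\,\hat{\otimes}\,\id)\hat{\tau}\nu_{\omega}(b_{1})$, equal $-Z$. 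So the condition reduces exactly to the $\Delta([p_{1},p_{2}])$ compatibility.

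Thus \eqref{cdpbi5} and \eqref{cdpbi7} genuinely need the mixed axioms of Definition \ref{def:Pbialg}(iii), not just cocommutativity and antisymmetry. With this correction, your plan of tabulating the $B$-pairings and assembling each condition goes through.
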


\begin{proof}
By Propositions \ref{pro:aff-poi}, \ref{pro:coperm-copoi} and Lemmas
\ref{lem:Lie+perm=L}, \ref{lem:ass+perm=p}, to prove that $(P\otimes B, \circ, \ast,
\nu, \vartheta)$ is a completed DPP bialgebra, we only need to prove that
Eqs. \eqref{cdpbi1}-\eqref{cdpbi7} hold. In fact, for any $(p_{1}, b_{1})$,
$(p_{2}, b_{2})\in P\otimes B$,
\begin{align*}
&\; \nu((p_{1}, b_{1})\ast(p_{2}, b_{2}))-((\fl_{P\otimes B}-\fr_{P\otimes B})
(p_{2}, b_{2})\,\hat{\otimes}\,\id-\id\,\hat{\otimes}\,\fr_{P\otimes B}(p_{2}, b_{2}))
((\vartheta+\hat{\tau}(\vartheta))(p_{1}, b_{1}))\\[-1mm]
&\qquad\qquad\qquad-(\id\,\hat{\otimes}\,\ffl_{P\otimes B}(p_{1}, b_{1})
+\ffl_{P\otimes B}(p_{1}, b_{1})\,\hat{\otimes}\,\id)(\nu(p_{2}, b_{2}))\\
=&\; \Delta([p_{1}, p_{2}])\bullet\nu_{\omega}(b_{1}\circ_{B}b_{2})
+\Big((\id\otimes\fu_{P}(p_{2}))(\delta(p_{1}))\Big)\bullet
\Big((\id\,\hat{\otimes}\,\fr_{B}(b_{2}))(\nu_{\omega}(b_{1})
-\hat{\tau}(\nu_{\omega}(b_{1})))\Big)\\[-1mm]
&\ \ -\Big((\fu_{P}(p_{2})\otimes\id)(\delta(p_{1}))\Big)\bullet
\Big(((\fl_{B}-\fr_{B})(b_{2})\,\hat{\otimes}\,\id)(\nu_{\omega}(b_{1})
-\hat{\tau}(\nu_{\omega}(b_{1})))\Big)\\[-1mm]
&\ \ -\Big((\id\otimes\ad_{P}(p_{1}))(\Delta(p_{2}))\Big)\bullet
\Big((\id\,\hat{\otimes}\,\fl_{B}(b_{1}))(\nu_{\omega}(b_{2}))\Big)\\[-1mm]
&\ \ -\Big((\ad_{P}(p_{1})\otimes\id)(\Delta(p_{2}))\Big)\bullet
\Big((\fl_{B}(b_{1})\,\hat{\otimes}\,\id)(\nu_{\omega}(b_{2}))\Big).
\end{align*}
Note that, for any $e, f\in B$,
$$
\hat{\omega}(\nu_{\omega}(b_{1}\circ_{B}b_{2}),\ \ e\otimes f)
=-\omega(b_{2},\ \ b_{1}\circ_{B}(e\circ_{B}f))=
\hat{\omega}((\id\,\hat{\otimes}\,\fr_{B}(b_{2}))(\nu_{\omega}(b_{1})),\ \ e\otimes f).
$$
We get $\nu_{\omega}(b_{1}\circ_{B}b_{2})=(\id\,\hat{\otimes}\,\fr_{B}(b_{2}))
(\nu_{\omega}(b_{1}))$ since $\hat{\omega}(-,-)$ is left nondegenerate. Similarly,
we also have $(\fr_{B}(b_{2})\,\hat{\otimes}\,\id)(\nu_{\omega}(b_{1}))=
(\id\,\hat{\otimes}\,\fr_{B}(b_{2}))(\hat{\tau}(\nu_{\omega}(b_{1})))=0$,
$\nu_{\omega}(b_{1}\circ_{B}b_{2})=(\id\,\hat{\otimes}\,\fl_{B}(b_{1}))(\nu_{\omega}(b_{2}))
=(\fl_{B}(b_{1})\,\hat{\otimes}\,\id)(\nu_{\omega}(b_{2}))=
(\fr_{B}(b_{2})\,\hat{\otimes}\,\id)((\hat{\tau}(\nu_{\omega}(b_{1})))-
(\fl_{B}(b_{2})\,\hat{\otimes}\,\id)((\hat{\tau}(\nu_{\omega}(b_{1})))+
(\fl_{B}(b_{2})\,\hat{\otimes}\,\id)(\nu_{\omega}(b_{1}))$.
Hence, we have
\begin{align*}
&\; \nu((p_{1}, b_{1})\ast(p_{2}, b_{2}))-((\fl_{P\otimes B}-\fr_{P\otimes B})
(p_{2}, b_{2})\,\hat{\otimes}\,\id-\id\,\hat{\otimes}\,\fr_{P\otimes B}(p_{2}, b_{2}))
((\vartheta+\hat{\tau}(\vartheta))(p_{1}, b_{1}))\\[-1mm]
&\qquad\qquad\qquad-(\id\,\hat{\otimes}\,\ffl_{P\otimes B}(p_{1}, b_{1})
+\ffl_{P\otimes B}(p_{1}, b_{1})\,\hat{\otimes}\,\id)(\nu(p_{2}, b_{2}))\\
=&\; \Big(\Delta([p_{1}, p_{2}])-\big(\ad_{P}(p_{1})\otimes\id+\id\otimes\ad_{P}(p_{1})
\big)\Delta(p_{2})\\[-2mm]
&\qquad\qquad\qquad\qquad+\big(\fu_{P}(p_{2})\otimes\id-\id\otimes\fu_{P}(p_{2})\big)
\delta(p_{1})\Big)\bullet\nu_{\omega}(b_{1}\circ_{B}b_{2})\\
=&\; 0,
\end{align*}
since $(P, \cdot, [-,-], \Delta, \delta)$ is a Poisson bialgebra. That is,
Eq. \eqref{cdpbi1} holds. Similarly, Eqs. \eqref{cdpbi2}-\eqref{cdpbi7} also hold.
Thus, $(P\otimes B, \circ, \ast, \nu, \vartheta)$ is a completed DPP bialgebra.
\end{proof}

Recall that a Poisson bialgebra $(P, \cdot, [-,-], \Delta, \delta)$ is called {\bf coboundary}
if there exists an element $r\in P\otimes P$ such that $\Delta=\Delta_{r}$ and
$\delta=\delta_{r}$, where
\begin{align}
\Delta_{r}(p)&=(\id\otimes\fu_{P}(p)-\fu_{P}(p)\otimes\id)(r), \label{ass-cobo}\\
\delta_{r}(p)&=(\id\otimes\ad_{P}(p)+\ad_{P}(p)\otimes\id)(r), \label{lie-cobo}
\end{align}
for any $p\in P$. Let $(P, \cdot, [-,-])$ be a Poisson algebra. An element
$r=\sum_{i}x_{i}\otimes y_{i}\in P\otimes P$ is said to be {\bf Poi-invariant}
if $(\id\otimes\fu_{P}(p)-\fu_{P}(p)\otimes\id)(r)=0$ and $(\id\otimes\ad_{P}(p)
+\ad_{P}(p)\otimes\id)(r)=0$ for all $p\in P$. The equations
$$
\mathbf{A}_{r}:=r_{12}r_{13}+r_{13}r_{23}-r_{23}r_{12}=0 \quad\mbox{and}\quad
\mathbf{C}_{r}:=[r_{12}, r_{13}]+[r_{13}, r_{23}]+[r_{12}, r_{23}]=0
$$
is called the (classical) {\bf Poisson Yang-Baxter equation} ($\PoiYBE$) in $(\g, [-,-])$,
where $r_{12}r_{13}=\sum_{i,j}(x_{i}x_{j})\otimes y_{i}\otimes y_{j}$,
$r_{13}r_{23}=\sum_{i,j}x_{i}\otimes x_{j}\otimes(y_{i}y_{j})$,
$r_{23}r_{12}=\sum_{i,j}x_{j}\otimes(x_{i}y_{j})\otimes y_{i}$,
$[r_{12}, r_{13}]=\sum_{i,j}[x_{i}, x_{j}]\otimes y_{i}\otimes y_{j}$,
$[r_{13}, r_{23}]=\sum_{i,j}x_{i}\otimes x_{j}\otimes[y_{i}, y_{j}]$ and
$[r_{12}, r_{23}]=\sum_{i,j}x_{i}\otimes[y_{i}, x_{j}]\otimes y_{j}$.
It is easy to see that $\mathbf{A}_{r}=0$ is just the (classical) associative
Yang-Baxter equation in an associative algebra and $\mathbf{C}_{r}=0$ is just the
classical Yang-Baxter equation in a Lie algebra.

\begin{pro}[\cite{Lin}]\label{pro:poi-bia}
Let $(P, \cdot, [-,-])$ be a Poisson algebra, $r\in P\otimes P$, $\Delta_{r}, \delta_{r}:
P\rightarrow P\otimes P$ be the linear maps defined by Eqs. \eqref{ass-cobo} and \eqref{lie-cobo} respectively.
\begin{enumerate}\itemsep=0pt
\item[$(i)$] If $r$ is a skew-symmetric solution of the $\PoiYBE$ in $(P, \cdot, [-,-])$,
     then $(P, \cdot, [-,-], \Delta_{r}, \delta_{r})$ is a Poisson bialgebra, which is
     called a {\bf triangular Poisson bialgebra} associated with $r$.
\item[$(ii)$] If $r$ is a solution of the $\PoiYBE$ in $(P, \cdot, [-,-])$ and $r+\tau(r)$ is
     Poi-invariant, then $(P, \cdot, [-,-]$, $\Delta_{r}, \delta_{r})$ is a Poisson
     bialgebra, which is called a {\bf quasi-triangular Poisson bialgebra} associated
     with $r$.
\item[$(iii)$] Let $(P, \cdot, [-,-]$, $\Delta_{r}, \delta_{r})$ be quasi-triangular Poisson
     bialgebra associated with $r\in P\otimes P$. If $\mathcal{I}=r^{\sharp}+
     \tau(r)^{\sharp}: P^{\ast}\rightarrow P$ is an isomorphism of vector spaces, then
     $(P, \cdot, [-,-]$, $\Delta_{r}, \delta_{r})$ is called a {\bf factorizable
     Poisson bialgebra}.
\end{enumerate}
\end{pro}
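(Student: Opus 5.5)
Part (iii) is a definition, so only (i) and (ii) need proof. I would reduce (i) to (ii): if $r$ is skew-symmetric then $r+\tau(r)=0$, which is trivially Poi-invariant. The work is therefore (ii). By Definition \ref{def:Pbialg}, showing that $(P,\cdot,[-,-],\Delta_{r},\delta_{r})$ is a Poisson bialgebra splits into five checks:
\begin{enumerate}
\item[(a)] $(P,\cdot,\Delta_{r})$ is an infinitesimal bialgebra;
\item[(b)] $(P,[-,-],\delta_{r})$ is a Lie bialgebra;
\item[(c)] the compatibility $(\id\otimes\Delta_{r})\delta_{r}=(\delta_{r}\otimes\id)\Delta_{r}+(\tau\otimes\id)(\id\otimes\delta_{r})\Delta_{r}$ holds, so that $(P,\Delta_{r},\delta_{r})$ is a Poisson coalgebra;
\item[(d)] the formula for $\Delta_{r}([p_{1},p_{2}])$ in Definition \ref{def:Pbialg}(iii) holds;
\item[(e)] the formula for $\delta_{r}(p_{1}p_{2})$ in Definition \ref{def:Pbialg}(iii) holds.
\end{enumerate}

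For (b) I would use the classical Drinfeld argument. The map $\delta_{r}$ is a coboundary for the adjoint action on $P\otimes P$, so the cocycle (bialgebra) condition holds automatically. Skew-symmetry $\tau\delta_{r}=-\delta_{r}$ amounts to $(\id\otimes\ad_{P}(p)+\ad_{P}(p)\otimes\id)(r+\tau(r))=0$, which is the Lie half of Poi-invariance. The co-Jacobi identity equals $\sum_{\rm cyclic}(\ad_{P}(p)\otimes\id\otimes\id)\mathbf{C}_{r}$ plus terms that vanish by ad-invariance of $r+\tau(r)$.

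For (a) I would run the commutative analogue. $\Delta_{r}(a_{1}a_{2})$ satisfies the derivation-type compatibility by associativity and commutativity alone, since $\Delta_{r}$ is an inner coboundary. Cocommutativity is equivalent to the $\fu_{P}$-half of Poi-invariance. Finally $(\Delta_{r}\otimes\id)\Delta_{r}-(\id\otimes\Delta_{r})\Delta_{r}$ can be written as $(\id\otimes\id\otimes\fu_{P}(p)-\fu_{P}(p)\otimes\id\otimes\id)\mathbf{A}_{r}$ plus invariance terms.

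For (d) and (e) I would expand both sides directly using the definitions \eqref{ass-cobo} and \eqref{lie-cobo}. These expressions are linear in $r$, so the check uses only the Leibniz rule $[p,xy]=[p,x]y+x[p,y]$, the Jacobi identity and commutativity. After cancellation, the leftover terms should be of the form $(\beta\otimes\id)(r+\tau(r))$ or $(\id\otimes\beta)(r+\tau(r))$ with $\beta\in\{\fu_{P},\ad_{P}\}$, and these vanish by Poi-invariance. As a sanity check, I would first verify the case $r+\tau(r)=0$, where everything must cancel exactly.

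The main obstacle is (c), since it is quadratic in $r$ and mixes both products. I would expand $(\id\otimes\Delta_{r})\delta_{r}(p)$, $(\delta_{r}\otimes\id)\Delta_{r}(p)$ and $(\tau\otimes\id)(\id\otimes\delta_{r})\Delta_{r}(p)$ into sums of terms $x_{i}\otimes x_{j}\otimes y_{i}y_{j}$, $[x_{i},x_{j}]\otimes\cdots$, with $p$ acting in one slot. Two tools should then reorganize the result. The Leibniz rule converts products such as $[p,x_{i}y_{j}]$ into mixed terms. Invariance lets $r$ be replaced by $-\tau(r)$ inside expressions hit by $\fu_{P}(p)$ or $\ad_{P}(p)$. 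The aim is to write the difference as $(\ad_{P}(p)\otimes\id\otimes\id+\cdots)\mathbf{A}_{r}$ plus $(\fu_{P}(p)\otimes\id\otimes\id-\cdots)\mathbf{C}_{r}$ plus invariance terms. If the purely mixed cubic terms do not assemble this way, I would fall back on the matched-pair/Manin-triple characterization of Poisson bialgebras from \cite{NB}, where this compatibility is encoded in the double.
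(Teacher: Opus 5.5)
The paper gives no proof of this proposition. It quotes the result from \cite{Lin}, and (iii) is only a definition. Your self-contained verification is therefore a genuinely different route, and its architecture is right. Part (i) does follow from (ii), since $r+\tau(r)=0$. The five checks (a)--(e) are exactly what Definition~\ref{def:Pbialg} together with the definition of a Poisson coalgebra require. Your analysis of (a) and (b) is correct and can be sharpened. The infinitesimal compatibility and both identities of Definition~\ref{def:Pbialg}(iii), i.e.\ your (d) and (e), hold for \emph{every} $r$ by the Leibniz rule and commutativity alone, with no leftover invariance terms. Likewise $(\Delta_{r}\otimes\id)\Delta_{r}(p)-(\id\otimes\Delta_{r})\Delta_{r}(p)=(\fu_{P}(p)\otimes\id\otimes\id-\id\otimes\id\otimes\fu_{P}(p))\mathbf{A}_{r}$ holds on the nose. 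So Poi-invariance enters only through cocommutativity, skew-symmetry and co-Jacobi for $\delta_{r}$, and (c). The citation buys brevity; your route shows exactly where each hypothesis is used.

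The one step that is still only a plan is (c), and your fallback would not rescue it. The matched-pair and Manin-triple equivalences of \cite{NB} presuppose that $(P^{\ast},\Delta_{r}^{\ast},\delta_{r}^{\ast})$ is already a Poisson algebra, and the Leibniz rule there is precisely the dual of (c). That route is therefore circular. The direct computation does close, and it is cleanest after dualizing.

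Put $R_{+}=r^{\sharp}$, $R_{-}=-\tau(r)^{\sharp}$, $\mu=-\fu_{P}^{\ast}$ and $\rho=\ad_{P}^{\ast}$. For every $r$ the dual operations $\xi\cdot\eta:=\Delta_{r}^{\ast}(\xi\otimes\eta)$ and $[\xi,\eta]:=\delta_{r}^{\ast}(\xi\otimes\eta)$ are $\xi\cdot\eta=\mu(R_{+}\xi)\eta+\mu(R_{-}\eta)\xi$ and $[\xi,\eta]=\rho(R_{+}\xi)\eta-\rho(R_{-}\eta)\xi$. One checks that $\langle R_{+}\xi\,R_{+}\eta-R_{+}(\xi\cdot\eta),\zeta\rangle=\langle\mathbf{A}_{r},\xi\otimes\eta\otimes\zeta\rangle$ and $\langle[R_{+}\xi,R_{+}\eta]-R_{+}[\xi,\eta],\zeta\rangle=\langle\mathbf{C}_{r},\xi\otimes\eta\otimes\zeta\rangle$.

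Poi-invariance of $r+\tau(r)$ says precisely that $\mathcal{I}=r^{\sharp}+\tau(r)^{\sharp}=R_{+}-R_{-}$ satisfies $\mathcal{I}(\mu(a)\xi)=a\,\mathcal{I}(\xi)$ and $\mathcal{I}(\rho(a)\xi)=[a,\mathcal{I}(\xi)]$. Hence, once $\mathbf{A}_{r}=\mathbf{C}_{r}=0$, $R_{-}$ is a homomorphism for both operations as well as $R_{+}$.

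Now expand $[\xi,\eta\cdot\zeta]-[\xi,\eta]\cdot\zeta-\eta\cdot[\xi,\zeta]$ using $\rho(a)\mu(b)=\mu(b)\rho(a)+\mu([a,b])$, $\rho(ab)=\mu(a)\rho(b)+\mu(b)\rho(a)$ and these homomorphism properties. Everything cancels except $\mu([\mathcal{I}\xi,R_{-}\zeta])\eta+\mu(\mathcal{I}\eta)\rho(R_{-}\zeta)\xi$. This vanishes because $[\mathcal{I}\xi,R_{-}\zeta]=-\mathcal{I}(\rho(R_{-}\zeta)\xi)$ and $\mu(\mathcal{I}\omega)\eta=\mu(\mathcal{I}\eta)\omega$; the latter follows from symmetry of $r+\tau(r)$ and the first intertwining relation.

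Transported back to $P\otimes P\otimes P$, this shows your difference in (c) is $\ad_{P}(p)$ acting on one leg of a leg-permuted $\mathbf{A}_{r}$, plus $\fu_{P}(p)$ acting on one leg of two leg-permuted copies of $\mathbf{C}_{r}$, up to invariance terms. That is the shape you were aiming for.
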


Let $(A=\oplus_{i\in\bz}A_{i}, \circ, \ast)$ be a $\bz$-graded DPP algebra
and $r=\sum_{i,j,\alpha}x_{i\alpha}\otimes y_{j\alpha}\in P\,\hat{\otimes}\,P$.
We denote $r_{12}\circ r_{23}:=\sum_{i,j,k,l,\alpha,\beta}
x_{i,\alpha}\otimes(y_{j,\alpha}\circ x_{k,\beta})\otimes y_{l,\beta}$,
$r_{12}\circ r_{13}:=\sum_{i,j,k,l,\alpha,\beta}(x_{i,\alpha}\circ x_{k,\beta})
\otimes y_{j,\alpha}\otimes y_{l,\beta}$,
$r_{13}\circ r_{23}:=\sum_{i,j,k,l,\alpha,\beta}x_{i,\beta}\otimes x_{k,\alpha}
\otimes(y_{j,\alpha}\circ y_{l,\beta})$
$r_{13}\circ r_{12}:=\sum_{i,j,k,l,\alpha,\beta}(x_{i,\alpha}\circ x_{k,\beta})
\otimes y_{l,\beta}\otimes y_{j,\alpha}$,
$r_{12}\ast r_{13}:=\sum_{i,j,k,l,\alpha,\beta}(x_{i,\alpha}\ast x_{k,\beta})
\otimes y_{j,\alpha}\otimes y_{l,\beta}$,
$r_{12}\ast r_{23}:=\sum_{i,j,k,l,\alpha,\beta}x_{i,\alpha}\otimes
(y_{j,\alpha}\ast x_{k,\beta})\otimes y_{l,\beta}$,
$r_{23}\ast r_{12}:=\sum_{i,j,k,l,\alpha,\beta}x_{k,\alpha}\otimes
(x_{i,\beta}\ast y_{l,\beta})\otimes y_{j,\alpha}$ and
$r_{23}\ast r_{13}:=\sum_{i,j,k,l,\alpha,\beta}x_{k,\alpha}\otimes
x_{i,\beta}\otimes(y_{j,\alpha}\ast y_{l,\beta})$. If $r\in A\,\hat{\otimes}\,A$ satisfies
$\mathbf{P}_{r}:=r_{12}\circ r_{23}-r_{13}\circ r_{23}+r_{12}\circ r_{13}-r_{13}\circ r_{12}=0$
and $\mathbf{L}_{r}=r_{12}\ast r_{13}-r_{12}\ast r_{23}-r_{23}\ast r_{12}+r_{23}\ast r_{13}=0$
in $A\,\hat{\otimes}\,A\,\hat{\otimes}\,A$, then $r$ is called a
{\bf completed solution} of the $\DPYBE$ in $(A=\oplus_{i\in\bz}A_{i}, \circ, \ast)$.
If $A=A_{0}$ is finite-dimensional, a completed solution of the $\DPYBE$ in $(A, \circ, \ast)$
is just a solution of the $\DPYBE$ in DPP algebra $(A=A_{0}, \circ, \ast)$. The same
argument of the proof for \cite[Proposition 3.14, Theorem 3.15]{Lu}
extends to the completed case. We obtain the following proposition.

\begin{pro}\label{pro:dP-tri}
Let $(A=\oplus_{i\in\bz}A_{i}, \circ, \ast)$ be a $\bz$-graded DPP algebra
and $r\in A\,\hat{\otimes}\,A$ is a completed solution of the $\DPYBE$ in
$(A=\oplus_{i\in\bz}A_{i}, \circ, \ast)$. Define a bilinear map
$\vartheta_{r}: A\rightarrow A\,\hat{\otimes}\,A$ by
\begin{align}
\nu_{r}(a):&=\big(\id\,\hat{\otimes}\,\fr_{A}(a)
+(\fr_{A}-\fl_{A})(a)\,\hat{\otimes}\,\id\big)(r), \label{cP-cobo}\\
\vartheta_{r}(a):&=\big((\fl_{A}+\fr_{A})(a)\,\hat{\otimes}\,\id)
-\id\,\hat{\otimes}\,\fr_{A}(a)\big)(r),  \label{cL-cobo}
\end{align}
for any $a\in A$. $(i)$ If $r-\hat{\tau}(r)$ is Poi-invariant, then $(A, \circ, \ast,
\nu_{r}, \vartheta_{r})$ is a completed Poisson di-bialgebra, which is called a {\bf
quasi-triangular completed DPP bialgebra} associated with $r$.
$(ii)$ If $r$ is symmetric, i.e., $r=\hat{\tau}(r)$, then
$(A, \circ, \ast, \nu_{r}, \vartheta_{r})$ is a completed DPP bialgebra,
which is called a {\bf triangular completed DPP bialgebra} associated with $r$.
\end{pro}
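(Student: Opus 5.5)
The plan is to reduce the statement to its two component pieces, exactly as the finite-dimensional version \cite[Proposition 3.14, Theorem 3.15]{Lu} reduces to the perm and Leibniz cases. Two typographical points must be fixed first. In \eqref{cL-cobo} the operators are to be read as $\ffl_{A},\ffr_{A}$, so that $\vartheta_r(a)=F(a)(r)$ as in \eqref{cobLeb}. ``Poi-invariant'' in $(i)$ is to be read as DP-invariant, i.e. $F(a)(r-\hat\tau(r))=G(a)(r-\hat\tau(r))=0$.

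\textbf{Well-definedness.} For homogeneous $a\in A_k$, each of $\fl_A(a),\fr_A(a),\ffl_A(a),\ffr_A(a)$ shifts degree by $k$. Hence $\nu_r(a)$ and $\vartheta_r(a)$ are again formal sums with finitely many terms in each $A_i\otimes A_j$, so they lie in $A\,\hat\otimes\,A$. All the compositions appearing in Definition \ref{def:CASIbia}, such as $(\nu_r\,\hat\otimes\,\id)\nu_r$, are then defined componentwise. Each graded component of a given identity involves only finitely many graded pieces of $r$.

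\textbf{Component structures.} The perm part $(A,\circ,\nu_r)$ and the Leibniz part $(A,\ast,\vartheta_r)$ are handled separately. The finite-dimensional proofs (Lin for perm, BLST/Lu for Leibniz) are purely algebraic identities in the tensor algebra. They express, for instance, $(\nu_r\otimes\id)\nu_r(a)-(\id\otimes\nu_r)\nu_r(a)$ as a sum of terms of the form $(\text{operators built from }a)$ applied to $\mathbf P_r$, plus terms in $(\cdot)(r-\tau(r))$ that vanish by invariance. These identities are linear in each tensor leg. Applied degree by degree they hold verbatim with $\otimes$ replaced by $\hat\otimes$ and $\tau$ by $\hat\tau$, since only finitely many summands contribute to each component. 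So $\mathbf P_r=0$ together with perm-invariance of $r-\hat\tau(r)$ gives a completed perm bialgebra. Likewise $\mathbf L_r=0$ together with Leib-invariance gives a completed Leibniz bialgebra.

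\textbf{Mixed conditions.} These are the DPP coalgebra axioms and \eqref{cdpbi1}--\eqref{cdpbi7}. I would follow Lu's computation: expand, for example, $\nu_r(a_1\ast a_2)$ minus the right side of \eqref{cdpbi1} in terms of $r$. Using the DPP algebra identities of Definition \ref{def:dip-alg}, most terms cancel, and what remains is a combination of $G(\cdot)$ and $F(\cdot)$ applied to $r-\hat\tau(r)$, which vanishes by DP-invariance. Note that these bialgebra compatibilities are quadratic in the algebra but linear in $r$, so no Yang--Baxter terms appear. The mixed coalgebra axioms are quadratic in $r$ and reduce to mixed combinations of $\mathbf P_r$ and $\mathbf L_r$ plus invariance terms.

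\textbf{Triangular case and main obstacle.} For $(ii)$, if $r=\hat\tau(r)$ then $r-\hat\tau(r)=0$, which is trivially DP-invariant, so $(ii)$ follows from $(i)$. The expected main obstacle is the bookkeeping in the mixed coalgebra axioms: one must verify that the finite-dimensional identity of Lu involves only finitely many contractions per graded component. I would check this by fixing output degrees $(i,j,k)$ and noting that the sums over the two copies of $r$ are then finite, because each operator has fixed degree shift.
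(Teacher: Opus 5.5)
Your route is the paper's route. The paper gives no computation and only asserts that the argument of \cite[Proposition 3.14, Theorem 3.15]{Lu} carries over to the completed setting, which is what you propose. Your readings of $\ffl_A,\ffr_A$ in \eqref{cL-cobo} and of ``DP-invariant'' in $(i)$ are the intended ones, and $(ii)$ does follow from $(i)$.

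However, your justification for why the transfer works contains a false step. Write $r=\sum_{i,j}r_{ij}$ with $r_{ij}\in A_i\otimes A_j$. Fixed degree shifts of the operators do not make the graded components finite for a general $r\in A\,\hat{\otimes}\,A$:
\begin{itemize}
\item The $A_p\otimes A_q\otimes A_s$-component of $r_{12}\circ r_{23}$ is a sum over all $j\in\bz$ of contractions of $r_{p,j}$ with $r_{q-j,s}$.
\item Likewise, the $A_p\otimes A_q\otimes A_s$-component of $(\nu_r\,\hat{\otimes}\,\id)\nu_r(a)$ collects contributions from the $A_t$-leg of $\nu_r(a)$ for every $t\in\bz$.
\end{itemize}
The degree constraint fixes only the sum of the two inner degrees and leaves one summation index free. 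So your claim that ``each graded component involves only finitely many graded pieces of $r$'' is wrong in general. Without more, neither $\mathbf{P}_r,\mathbf{L}_r$ nor the coalgebra axioms for $\nu_r,\vartheta_r$ need be defined.

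The argument needs a finiteness hypothesis that the paper uses tacitly. For example, take $r\in\prod_{i+j=c}A_i\otimes A_j$ for a fixed $c$, or more generally $r$ supported on finitely many such anti-diagonals. This holds for $\widehat{r}$ in \eqref{r-indP} with $c=-m$, where $m$ is the shift in Definition \ref{def:quad}, because $\omega$ is graded. Under this hypothesis:
\begin{itemize}
\item $\nu_r(A_k)$ and $\vartheta_r(A_k)$ lie in $\prod_{i+j=c+k}A_i\otimes A_j$;
\item every graded component of every expression in the axioms and in the $\DPYBE$ is a finite sum;
\item your degree-by-degree transfer of Lu's tensor identities becomes legitimate.
\end{itemize}
You should state this assumption, or build it into the meaning of ``completed solution'', rather than claim that finiteness comes for free.
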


These quasi-triangular and triangular bialgebra structures are closely related to the
solutions of the Yang-Baxter equation. To consider their relationships, we first
consider the relationships between the solutions of Yang-Baxter equations.
We now consider the relation between the solutions of the $\PoiYBE$ in a Poisson
algebra and the solutions of the $\DPYBE$ in the induced DPP algebra.
Let $(B=\oplus_{i\in\bz}B_{i}, \circ_{B}, \omega)$ be a quadratic $\bz$-graded perm
algebra and $\{e_{j}\}_{j\in\Omega}$ be a basis of $B=\oplus_{i\in\bz}B_{i}$ consisting
of homogeneous elements. Since $\omega(-,-)$ is graded, skew-symmetric and nondegenerate,
we get a homogeneous dual basis $\{f_{j}\}_{j\in\Omega}$ of $B=\oplus_{i\in\bz}B_{i}$,
which is called the dual basis of $\{e_{i}\}_{i\in\Omega}$ with respect to $\omega(-,-)$,
by $\omega(f_{i}, e_{j})=\delta_{ij}$, where $\delta_{ij}$ is the Kronecker delta.

\begin{pro}\label{pro:PYBE-DPYBE}
Let $(P, \cdot, [-,-])$ be a Poisson algebra, $(B=\oplus_{i\in\bz}B_{i}, \circ_{B},
\omega)$ be a quadratic $\bz$-graded perm algebra, and $(P\otimes B, \circ, \ast)$ be the
induced $\bz$-graded DPP algebra. Suppose that $r=\sum_{i}x_{i}\otimes y_{i}\in
P\otimes P$ is a solution of the $\PoiYBE$ in $(P, \cdot, [-,-])$. Then
\begin{align}
\widehat{r}=\sum_{i}\sum_{j\in\Omega}(x_{i}\otimes e_{j})\otimes(y_{i}\otimes f_{j})
\in(P\otimes B)\,\hat{\otimes}\,(P\otimes B)  \label{r-indP}
\end{align}
is a completed solution of the $\DPYBE$ in $(P\otimes B, \circ, \ast)$,
where $\{e_{j}\}_{j\in\Omega}$ is a homogeneous basis of $B=\oplus_{i\in\bz}B_{i}$
and $\{f_{j}\}_{j\in\Omega}$ is the homogeneous dual basis of $\{e_{j}\}_{j\in\Omega}$
with respect to $\omega(-,-)$. Moreover, we have
\begin{enumerate}\itemsep=0pt
\item[$(i)$] $\widehat{r}$ is symmetric if $r$ is skew-symmetric;
\item[$(ii)$] $\widehat{r}-\hat{\tau}(\widehat{r})$ is DP-invariant if $r+\tau(r)$
     is Poi-invariant.
\end{enumerate}
\end{pro}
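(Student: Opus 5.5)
The strategy is to factor every term of $\mathbf{P}_{\widehat r}$ and $\mathbf{L}_{\widehat r}$ as a $P$-part times a $B$-part, using the $\bullet$ notation from the proof of Proposition \ref{pro:coperm-copoi}. The key auxiliary object is the canonical element
$$
\beta:=\sum_{j\in\Omega}e_{j}\otimes f_{j}\in B\,\hat{\otimes}\,B,
$$
so that $\widehat r=r\bullet\beta$. The whole computation reduces to a few identities for $\beta$ in the quadratic $\bz$-graded perm algebra $(B,\circ_B,\omega)$. These will be proved by pairing with $\hat\omega$ against homogeneous test vectors and invoking left nondegeneracy of $\hat\omega$, exactly as in the proof of Theorem \ref{thm:P+perm=dP}.

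First I will establish the needed $B$-identities. Since $\omega(f_i,e_j)=\delta_{ij}$ and $\omega$ is skew-symmetric, for every $b$ we have $b=\sum_j\omega(f_j,b)e_j$ and $b=-\sum_j\omega(e_j,b)f_j$. Combining this with $\omega(b_1\circ b_2,b_3)=\omega(b_2,b_1\circ b_3)$ and invariance, I expect the following relations in $B\,\hat\otimes\,B\,\hat\otimes\,B$:
$$
\beta_{12}\circ\beta_{13}=\beta_{13}\circ\beta_{12}=\beta_{12}\circ\beta_{23}=-\beta_{13}\circ\beta_{23}=:\Theta,
$$
where the notation $\beta_{12}\circ\beta_{23}$ etc.\ is as defined for $r$. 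Each equality is checked by pairing both sides against $u\otimes v\otimes w$ and reducing to an expression of the form $\omega(\,\cdot\,,\,\cdot\circ\cdot\,)$. The perm identities $(a\circ b)\circ c=(b\circ a)\circ c=a\circ(b\circ c)$ then match them. For instance, $\beta_{12}\circ\beta_{13}$ pairs to $\pm\omega(u,v\circ w)$-type expressions, and $\beta_{12}\circ\beta_{23}$ uses $\omega(f_j\circ e_k,\cdot)=\omega(e_k,f_j\circ\cdot)$. Because only finitely many graded pieces contribute for fixed homogeneous test vectors, all sums are well defined; gradedness of $\omega$ guarantees this.

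Second, with $\widehat r=r\bullet\beta$ and the products $\circ,\ast$ on $P\otimes B$, each term of the equations factorizes:
$$
\widehat r_{12}\circ\widehat r_{23}=(r_{12}r_{23})\bullet(\beta_{12}\circ\beta_{23}),\qquad
\widehat r_{12}\ast\widehat r_{13}=[r_{12},r_{13}]\bullet(\beta_{12}\circ\beta_{13}),
$$
and similarly for the other terms. Substituting the identities above, every term becomes a $P$-tensor times the common factor $\Theta$, which gives
$$
\mathbf{P}_{\widehat r}=\big(r_{12}r_{23}+r_{13}r_{23}+r_{12}r_{13}-r_{13}r_{12}\big)\bullet\Theta .
$$
Using commutativity of $\cdot$, the first bracket is rewritten as $\pm\mathbf{A}_r$, possibly after applying a permutation of tensor factors. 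Likewise, $\mathbf{L}_{\widehat r}$ reduces to $\pm\mathbf{C}_r\bullet\Theta$, using antisymmetry of $[-,-]$ to convert $[x_i,y_j]$-type terms into the three terms of $\mathbf{C}_r$. Both therefore vanish.

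\textbf{The main obstacle} is the sign and index bookkeeping in this step. The paper's definitions of $r_{23}\circ r_{12}$ and the $\mathbf{L}_r$ sign pattern must line up exactly with $\mathbf{A}_r$ and $\mathbf{C}_r$. If the $B$-identities come with different signs than guessed, I will recompute them from scratch by pairing with $u\otimes v\otimes w$, and then adjust the grouping accordingly.

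Finally, (i) and (ii). We have $\hat\tau(\beta)=\sum_j f_j\otimes e_j=-\beta$: since $\{-e_j\}$ is dual to $\{f_j\}$ by skew-symmetry, $\beta$ is independent of the choice of homogeneous basis, up to this sign. Hence $\hat\tau(\widehat r)=-\tau(r)\bullet\beta$, so $r=-\tau(r)$ gives $\widehat r$ symmetric, and $\widehat r-\hat\tau(\widehat r)=(r+\tau(r))\bullet\beta$. For DP-invariance, apply $F(p\otimes b)$ and $G(p\otimes b)$ to $s\bullet\beta$ with $s=r+\tau(r)$. They factor as
$$
\big((\ad_P(p)\otimes\id)s\big)\bullet\big((\fl_B(b)\,\hat\otimes\,\id)\beta\big)
+\big((\id\otimes\ad_P(p))s\big)\bullet\big((\id\,\hat\otimes\,\fr_B(b))\beta\big)
$$
and the analogous expression with $\fu_P(p)$. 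The invariance of $\omega$ yields $(\fl_B(b)\,\hat\otimes\,\id)\beta=(\id\,\hat\otimes\,\fl_B(b))\beta$, together with the corresponding relations for $\fr_B$. Using these, the $B$-factors agree, so $F$ and $G$ collapse to $\big((\id\otimes\ad_P(p)+\ad_P(p)\otimes\id)s\big)\bullet(\cdots)$ and $\big((\id\otimes\fu_P(p)-\fu_P(p)\otimes\id)s\big)\bullet(\cdots)$. Both vanish by Poi-invariance of $s$.
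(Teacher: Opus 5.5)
Your overall route is the paper's: write $\widehat r=r\bullet\beta$, split each term of $\mathbf{P}_{\widehat r}$ and $\mathbf{L}_{\widehat r}$ into a $P$-part $\bullet$ a $B$-part, and identify the $B$-parts by pairing with $\hat\omega$ and using left nondegeneracy. Your (i) is also the paper's argument. The gap is your table of $B$-identities, which is wrong beyond signs.

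Pairing with $u\otimes v\otimes w$ gives the following values:
\[
\Phi_1:=\beta_{12}\circ\beta_{13}\ \leadsto\ \omega(v\circ_B w,u),\qquad \Phi_2:=\beta_{13}\circ\beta_{12}\ \leadsto\ \omega(w\circ_B v,u),
\]
while $\beta_{12}\circ\beta_{23}$ and $\beta_{13}\circ\beta_{23}$ both give $\omega(u\circ_B v,w)=\omega(w\circ_B v-v\circ_B w,u)$. Hence $\beta_{12}\circ\beta_{23}=+\beta_{13}\circ\beta_{23}=\Phi_2-\Phi_1$.

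Moreover $\Phi_1\neq\Phi_2$ in general. For $B=\Bbbk\{x_1,x_2\}$ of Example~\ref{ex:ind-tri} one finds $\Phi_1=x_1\otimes x_2\otimes x_2-x_2\otimes x_2\otimes x_1$ but $\Phi_2=x_1\otimes x_2\otimes x_2-x_2\otimes x_1\otimes x_2$.

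Your fallback of recomputing signs cannot rescue a single common $\Theta$. By commutativity, $\sum_{i,j}(x_ix_j)\otimes y_j\otimes y_i=\sum_{i,j}(x_ix_j)\otimes y_i\otimes y_j$. So if all four $B$-parts were $\pm\Theta$, the last two terms of $\mathbf{P}_{\widehat r}$ would contribute $0$ or $\pm2\,r_{12}r_{13}\bullet\Theta$, never the single copy that $\mathbf{A}_r$ needs. Indeed, your displayed bracket collapses to $r_{12}r_{23}+r_{13}r_{23}$.

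The missing idea is that you must use the symmetry on the $P$-side to merge terms before a common $B$-factor appears. Since $r_{23}r_{12}=\sum_{i,j}x_i\otimes(y_ix_j)\otimes y_j$ by commutativity, one gets
\[
\mathbf{P}_{\widehat r}=(r_{23}r_{12}-r_{13}r_{23})\bullet(\Phi_2-\Phi_1)+(r_{12}r_{13})\bullet(\Phi_1-\Phi_2)=\mathbf{A}_r\bullet(\Phi_1-\Phi_2).
\]
This is exactly the paper's $\mathbf{A}_r\bullet\Phi_1-\mathbf{A}_r\bullet\Phi_2$.

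For $\mathbf{L}$, the $B$-parts of $\widehat r_{12}\ast\widehat r_{13}$, $\widehat r_{12}\ast\widehat r_{23}$, $\widehat r_{23}\ast\widehat r_{12}$, $\widehat r_{23}\ast\widehat r_{13}$ are $\Phi_1$, $\Phi_2-\Phi_1$, $\Phi_2$, $-\Phi_1$. Their $P$-parts are $[r_{12},r_{13}]$, $[r_{12},r_{23}]$, $-[r_{12},r_{23}]$, $-[r_{13},r_{23}]$. Antisymmetry lets the middle two merge. With the sign pattern $r_{12}\ast r_{13}-r_{12}\ast r_{23}-r_{23}\ast r_{12}+r_{23}\ast r_{13}$ of Section~\ref{sec:poi-di}, this gives $\mathbf{L}_{\widehat r}=\mathbf{C}_r\bullet\Phi_1$. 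With the pattern of Section~\ref{sec:bialg} the computation does not close; the paper itself just cites Hou's Proposition 4.16 for this half. So the common factors for $\mathbf{P}$ and $\mathbf{L}$ differ ($\Phi_1-\Phi_2$ versus $\Phi_1$), and no single $\Theta$ serves both.

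Your (ii) is right in outline but has a slip. The operator $(\ffl+\ffr)(p\otimes b)$ acts as $\ad_P(p)\otimes(\fl_B-\fr_B)(b)$, so the first $B$-factor in $F(p\otimes b)$ is $((\fl_B-\fr_B)(b)\,\hat\otimes\,\id)\beta$, not $(\fl_B(b)\,\hat\otimes\,\id)\beta$. The identity that collapses both $F$ and $G$ is $((\fl_B-\fr_B)(b)\,\hat\otimes\,\id)\beta=(\id\,\hat\otimes\,\fr_B(b))\beta$, which is what the paper proves. The $\fl_B$-identity you quote is true but not the one needed.
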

	
\begin{proof}
If $r$ is a solution of the $\PoiYBE$ in $(P, \cdot, [-,-])$, i.e.,
$\mathbf{A}_{r}=\mathbf{C}_{r}=0$, we need show $\mathbf{P}_{\widehat{r}}
=\mathbf{L}_{\widehat{r}}=0$. By \cite[Proposition 4.16]{Hou}, we get
$\mathbf{L}_{\widehat{r}}=0$ if $\mathbf{C}_{r}=0$. Here we only show that
$\mathbf{P}_{\widehat{r}}=0$ if $\mathbf{A}_{r}=0$. In fact,
for any $p, q\in\Omega$, we have
\begin{align*}
&\; \widehat{r}_{12}\circ\widehat{r}_{23}-\widehat{r}_{13}\circ\widehat{r}_{23}
+\widehat{r}_{12}\circ\widehat{r}_{13}-\widehat{r}_{13}\circ\widehat{r}_{12}\\
=&\;\sum_{i,j}\sum_{p,q}\Big(\big(x_{i}\otimes(y_{i}x_{j})\otimes y_{j}\big)
\bullet\big(e_{p}\otimes(f_{p}\circ_{B}e_{q})\otimes f_{q}\big)
-\big(x_{i}\otimes x_{j}\otimes(y_{i}y_{j})\big)
\bullet\big(e_{p}\otimes e_{q}\otimes(f_{p}\circ_{B}f_{q})\big)\\[-4mm]
&\qquad\quad+\big((x_{i}x_{j})\otimes y_{i}\otimes y_{j}\big)
\bullet\big((e_{p}\circ_{B}e_{q})\otimes f_{p}\otimes f_{q}\big)
-\big((x_{i}x_{j})\otimes y_{j}\otimes y_{i}\big)\bullet
\big((e_{p}\circ_{B}e_{q})\otimes f_{q}\otimes f_{p}\big)\Big).
\end{align*}
Note that, for given $e_{s}, e_{u}, e_{v}\in B$, $s, u, v\in\Omega$,
\begin{align*}
\hat{\omega}\Big(\sum_{p,q}e_{p}\otimes(f_{p}\circ_{B}e_{q})\otimes f_{q},\ \
e_{s}\otimes e_{u}\otimes e_{v}\Big)
&=\omega(e_{v}\diamond e_{u}-e_{u}\diamond e_{v},\; e_{s}),\\[-2mm]
\hat{\omega}\Big(\sum_{p,q}e_{p}\otimes e_{q}\otimes(f_{p}\circ_{B}f_{q}),\ \
e_{s}\otimes e_{u}\otimes e_{v}\Big)
&=\omega(e_{v}\diamond e_{u}-e_{u}\diamond e_{v},\; e_{s}),\\[-2mm]
\hat{\omega}\Big(\sum_{p,q}(e_{p}\circ_{B}e_{q})\otimes f_{p}\otimes f_{q},\ \
e_{s}\otimes e_{u}\otimes e_{v}\Big)&=\omega(e_{u}\circ_{B}e_{v},\; e_{s}),\\[-2mm]
\hat{\omega}\Big(\sum_{p,q}(e_{p}\circ_{B}e_{q})\otimes f_{q}\otimes f_{p},\ \
e_{s}\otimes e_{u}\otimes e_{v}\Big)&=\omega(e_{v}\circ_{B}e_{u},\; e_{s}).
\end{align*}
If we denote $\Phi_{1}, \Phi_{2}\in B\,\hat{\otimes}\,B\,\hat{\otimes}\,B$ by
$\hat{\varpi}(\Phi_{1},\; e_{s}\otimes e_{u}\otimes e_{v})=\varpi(e_{u}\diamond e_{v},\;
e_{s})$ and $\hat{\varpi}(\Phi_{2},\; e_{s}\otimes e_{u}\otimes e_{v})=\varpi(e_{v}\diamond
e_{u},\; e_{s})$, then we get
$$
\widehat{r}_{12}\ast\widehat{r}_{13}-\widehat{r}_{12}\ast\widehat{r}_{23}
-\widehat{r}_{23}\ast\widehat{r}_{12}+\widehat{r}_{23}\ast\widehat{r}_{13}
=\mathbf{A}_{r}\bullet\Phi_{1}-\mathbf{A}_{r}\bullet\Phi_{2}.
$$
That is, $\mathbf{P}_{\widehat{r}}=0$ if $\mathbf{A}_{r}=0$. Thus,
$\widehat{r}$ is a completed solution of the $\DPYBE$ in $(P\otimes B, \circ, \ast)$
if $r$ is a solution of the $\PoiYBE$ in $(P, \cdot, [-,-])$.

Second, for any $e_{s}, e_{t}\in B$, $s, t\in\Omega$, we have
$$
\hat{\omega}\Big(\sum_{j\in\Omega}e_{j}\otimes f_{j},\; e_{s}\otimes e_{t}\Big)
=\omega(e_{s}, e_{t})=-\varpi(e_{t}, e_{s})
=-\hat{\omega}\Big(\sum_{j\in\Omega}f_{j}\otimes e_{j},\; e_{s}\otimes e_{t}\Big).
$$
That is $\sum_{j}e_{j}\otimes f_{j}=-\sum_{j}f_{j}\otimes e_{j}$. Thus,
we get $\widehat{r}$ symmetric if $r$ is skew-symmetric.

Finally, if $r+\tau(r)$ is Poi-invariant, i.e., $(\id\otimes\fu_{P}(p)
-\fu_{P}(p)\otimes\id)(r+\tau(r))=0$ and $(\id\otimes\ad_{P}(p)+\ad_{P}(p)\otimes\id)
(r+\tau(r))=0$ for all $p\in P$, by \cite[Proposition 4.16]{Hou}, we get
$\widehat{r}-\hat{\tau}(\widehat{r})$ is Leib-invariant. Moreover,
for any $e_{s}, e_{t}\in B$, $s, t\in\Omega$, note that
$$
\hat{\omega}\Big(\sum_{j\in\Omega}(b\circ_{B}e_{j})\otimes f_{j},\ \ e_{s}\otimes e_{t}\Big)
=\omega(b\circ_{B}e_{t},\; e_{s})
=-\hat{\omega}\Big(\sum_{j\in\Omega}(b\circ_{B}f_{j})\otimes e_{j},\ \
e_{s}\otimes e_{t}\Big).
$$
We get $\sum_{j\in\Omega}(b\circ_{B}e_{j})\otimes f_{j}=-\sum_{j\in\Omega}(b\circ_{B}f_{j})
\otimes e_{j}$. Similarly, we also have $\sum_{j\in\Omega}(e_{j}\circ_{B}b)\otimes f_{j}
=-\sum_{j\in\Omega}(f_{j}\circ_{B}b)\otimes e_{j}$ and $\sum_{j\in\Omega}e_{j}\otimes
(f_{j}\circ_{B}b)=-\sum_{j\in\Omega}f_{j}\otimes(e_{j}\circ_{B}b)=\sum_{j\in\Omega}
\big((b\circ_{B}e_{j})\otimes f_{j}-(e_{j}\circ_{B}b)\otimes f_{j}\big)$.
Thus, for any $p\in P$ and $b\in B$,
\begin{align*}
&\;(\id\,\hat{\otimes}\,\fr_{P\otimes B}(p\otimes b)+(\fr_{P\otimes B}
-\fl_{P\otimes B})(p\otimes b)\,\hat{\otimes}\,\id)(\widehat{r}-\hat{\tau}(\widehat{r}))\\
=&\;\sum_{i}\sum_{j\in\Omega}\Big((x_{i}\otimes(y_{i}p))\bullet(e_{j}\otimes(f_{j}\circ_{B}b))
-(y_{i}\otimes(x_{i}p))\bullet(f_{j}\otimes(e_{j}\circ_{B}b))\\[-3mm]
&\qquad\qquad+((x_{i}p)\otimes y_{i})\bullet((e_{j}\circ_{B}b)\otimes f_{j})
-((y_{i}p)\otimes x_{i})\bullet((f_{j}\circ_{B}b)\otimes e_{j})\\[-1mm]
&\qquad\qquad-((px_{i})\otimes y_{i})\bullet((b\circ_{B}e_{j})\otimes f_{j})
+((py_{i})\otimes x_{i})\bullet((b\circ_{B}f_{j})\otimes e_{j})\Big)\\
=&\;\Big((\id\otimes\fu_{P}(p)-\fu_{P}(p)\otimes\id)(r+\tau(r))\Big)
\bullet\Big(\sum_{j\in\Omega}(b\circ_{B}e_{j})\otimes f_{j}-(e_{j}\circ_{B}b)
\otimes f_{j})\Big)\\
=&\; 0.
\end{align*}
Therefore, we obtain $\widehat{r}-\hat{\tau}(\widehat{r})$ is DP-invariant.
The proof is finished.
\end{proof}

Using the conclusion of Proposition \ref{pro:PYBE-DPYBE} and the definition of
quasi-triangular (resp. triangular) DPP bialgebra, we obtain the following
theorem.

\begin{thm}\label{thm:indu-DPbia}
Let $(P, \cdot, [-,-], \Delta, \delta)$ be a finite-dimensional Poisson bialgebra,
$(B=\oplus_{i\in\bz}B_{i}, \circ_{B}, \omega)$ be a quadratic $\bz$-graded perm algebra
and $(P\otimes B, \circ, \ast, \nu, \vartheta)$ be the induced completed Poisson
bi-dialgebra from $(P, \cdot, [-,-], \Delta, \delta)$ by $(B, \circ_{B}, \omega)$. If
$\Delta=\Delta_{r}$ and $\delta=\delta_{r}$ for some $r\in A\otimes A$, then
$\nu=\nu_{\widehat{r}}$ and $\vartheta=\vartheta_{\widehat{r}}$, where $\widehat{r}$
is given by Eq. \eqref{r-indP}.

In particular, we get that $(P\otimes B, \circ, \ast, \nu, \vartheta)$ is a coboundary (resp.
quasi-triangular, triangular) completed DPP bialgebra if
$(P, \cdot, [-,-], \Delta, \delta)$ is coboundary (resp. quasi-triangular, triangular).
\end{thm}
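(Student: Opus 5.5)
The argument has two parts. First we identify $\nu$ and $\vartheta$ with $\nu_{\widehat{r}}$ and $\vartheta_{\widehat{r}}$ by comparing both sides against $\hat{\omega}$. Then we feed Proposition \ref{pro:PYBE-DPYBE} into Proposition \ref{pro:dP-tri}.

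\textbf{Step 1: $\nu=\nu_{\widehat{r}}$.} Fix $p\otimes b\in P\otimes B$. Expanding \eqref{cP-cobo} with $\widehat{r}=\sum_{i,j}(x_{i}\otimes e_{j})\otimes(y_{i}\otimes f_{j})$ gives
\begin{align*}
\nu_{\widehat{r}}(p\otimes b)=\sum_{i,j}\Big(&(x_{i}\otimes y_{i}p)\bullet(e_{j}\otimes f_{j}\circ_{B}b)+(x_{i}p\otimes y_{i})\bullet(e_{j}\circ_{B}b\otimes f_{j})\\
&-(px_{i}\otimes y_{i})\bullet(b\circ_{B}e_{j}\otimes f_{j})\Big).
\end{align*}
Next we rewrite each $B$-factor using the identities established in the proof of Proposition \ref{pro:PYBE-DPYBE}:
\begin{itemize}
\item $\sum_{j}e_{j}\otimes(f_{j}\circ_{B}b)=\sum_{j}\big((b\circ_{B}e_{j})-(e_{j}\circ_{B}b)\big)\otimes f_{j}$;
\item the two remaining tensors are compared with $\nu_{\omega}(b)$ by pairing against $e_{s}\otimes e_{t}$ via $\hat{\omega}$ and using invariance and skew-symmetry of $\omega$.
\end{itemize}
I expect that $\sum_{j}(b\circ_{B}e_{j})\otimes f_{j}$ and $\sum_{j}(e_{j}\circ_{B}b)\otimes f_{j}$ are expressible through $\nu_{\omega}(b)$. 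The perm identity should force one combination to vanish, so that everything collapses to $\big((\id\otimes\fu_{P}(p)-\fu_{P}(p)\otimes\id)(r)\big)\bullet\nu_{\omega}(b)$, up to a sign fixed by the definition of $\nu_{\omega}$. That expression equals $\Delta_{r}(p)\bullet\nu_{\omega}(b)=\nu(p\otimes b)$, and left nondegeneracy of $\hat{\omega}$ makes the comparison an equality.

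\textbf{Step 2: $\vartheta=\vartheta_{\widehat{r}}$.} For $\vartheta$ I would cite \cite[Proposition 4.16]{Hou} and its accompanying theorem, which treat exactly the Lie/Leibniz case $\vartheta(g\otimes b)=\delta_{r}(g)\bullet\nu_{\omega}(b)$. If a direct check is preferred, the same $\hat{\omega}$-pairing argument applies with $\ad_{P}$ in place of $\fu_{P}$, producing $\delta_{r}(p)\bullet\nu_{\omega}(b)$.

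\textbf{Step 3: conclusions.} Step 1 and Step 2 give the coboundary statement. If $(P,\cdot,[-,-],\Delta,\delta)$ is quasi-triangular, then $r$ solves the $\PoiYBE$ and $r+\tau(r)$ is Poi-invariant. By Proposition \ref{pro:PYBE-DPYBE}, $\widehat{r}$ is then a completed solution of the $\DPYBE$ and $\widehat{r}-\hat{\tau}(\widehat{r})$ is DP-invariant. Proposition \ref{pro:dP-tri}(i) therefore shows the induced bialgebra is quasi-triangular. In the triangular case $r$ is skew-symmetric, so $\widehat{r}$ is symmetric by Proposition \ref{pro:PYBE-DPYBE}(i), and Proposition \ref{pro:dP-tri}(ii) applies.

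\textbf{Main obstacle.} The hardest part is the sign and index bookkeeping in Step 1: turning the three $B$-tensors into multiples of $\nu_{\omega}(b)$ with the correct signs so that they match the grouping $\id\otimes\fu_{P}(p)-\fu_{P}(p)\otimes\id$. If the three terms do not combine as predicted, I would pair both sides against $e_{s}\otimes e_{t}$ for every $s,t$ and compare coefficients directly, using $\omega(b_{1}\circ b_{2},b_{3})=\omega(b_{2},b_{1}\circ b_{3})$ and the perm relation $(b_{1}\circ b_{2})\circ b_{3}=(b_{2}\circ b_{1})\circ b_{3}$.
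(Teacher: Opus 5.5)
Your proposal follows the paper's proof essentially step for step. Both expand $\nu_{\widehat{r}}(p\otimes b)$ and identify the $B$-tensors with $\nu_{\omega}(b)$ by pairing against $e_{s}\otimes e_{t}$ and using left nondegeneracy of $\hat{\omega}$. Both cite \cite{Hou} (or run the same pairing with $\ad_{P}$) for $\vartheta=\vartheta_{\widehat{r}}$, and then feed Proposition \ref{pro:PYBE-DPYBE} into Proposition \ref{pro:dP-tri}.

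The sign you hedge on comes out as $+$. Invariance and skew-symmetry of $\omega$ alone, with no perm relation needed, give
\[
\nu_{\omega}(b)=\sum_{j}e_{j}\otimes(f_{j}\circ_{B}b)=\sum_{j}(b\circ_{B}e_{j}-e_{j}\circ_{B}b)\otimes f_{j}.
\]
This collapses your three terms to exactly $\Delta_{r}(p)\bullet\nu_{\omega}(b)=\nu(p\otimes b)$.
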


\begin{proof}
Let $r=\sum_{i}x_{i}\otimes y_{i}\in\g\otimes\g$, $\Delta=\Delta_{r}$ and
$\delta=\delta_{r}$. For any $p\in P$ and $b\in B$, we have
$$
\vartheta(p\otimes b)=\sum_{i}\sum_{l,k,\alpha}\Big(
(x_{i}\otimes(py_{i}))\bullet(b_{1,l,\alpha}\otimes b_{2,k,\alpha})
-((px_{i})\otimes y_{i})\bullet(b_{1,l,\alpha}\otimes b_{2,k,\alpha})\Big),
$$
where $\Delta_{r}(p)=(\id\otimes\fu_{P}(p)-\fu_{P}(p)\otimes\id)(r)
=\sum_{i}\big(x_{i}\otimes(py_{i})-(px_{i})\otimes y_{i}\big)$ and
$\nu_{\varpi}(b)=\sum_{l,k,\alpha}b_{1,l,\alpha}\otimes b_{2,k,\alpha}$. On the other hand,
\begin{align*}
\nu_{\widehat{r}}(p\otimes b)&=\big(\id\,\hat{\otimes}\,\fr_{P\otimes B}(p\otimes b)
+(\fr_{P\otimes B}-\fl_{P\otimes B})(p\otimes b)\,\hat{\otimes}\,\id)\big)(\widehat{r})\\
&=\sum_{i}\sum_{j\in\Omega}\Big(\big(x_{i}\otimes(y_{i}p)\big)
\bullet\big(e_{j}\otimes(f_{j}\circ_{B}b)\big)+\big((px_{i})\otimes y_{i}\big)
\bullet\big((e_{j}\circ_{B}b)\otimes f_{j}-(b\circ_{B}e_{j})\otimes f_{j}\big)\Big),
\end{align*}
where $\widehat{r}=\sum_{i}\sum_{j\in\Omega}(x_{i}\otimes e_{j})\otimes(y_{i}\otimes f_{j})$,
$\{e_{j}\}_{j\in\Omega}$ is a homogeneous basis of $B=\oplus_{i\in\bz}B_{i}$
and $\{f_{j}\}_{j\in\Omega}$ is the homogeneous dual basis of $\{e_{j}\}_{j\in\Omega}$
with respect to $\omega(-,-)$.

For two homogeneous basis elements $e_{s}, e_{t}\in B$, $s, t\in\Omega$, since
$$
\hat{\omega}\Big(\sum_{l,k,\alpha}b_{1,l,\alpha}\otimes b_{2,k,\alpha},\;
e_{s}\otimes e_{t}\Big)=\omega(b\circ_{B}e_{t}-e_{t}\circ_{B}b,\; e_{s})
=\hat{\omega}\Big(\sum_{j\in\Omega}e_{j}\otimes(f_{j}\diamond b),\; e_{s}\otimes e_{t}\Big),
$$
we get $\sum_{l,k,\alpha}b_{1,l,\alpha}\otimes b_{2,k,\alpha}=\sum_{j}e_{j}\otimes
(f_{j}\circ_{B}b)$. Similarly, we also have$\sum_{l,k,\alpha}b_{1,l,\alpha}\otimes
b_{2,k,\alpha}=\sum_{j}\big((b\circ_{B}e_{j})\otimes f_{j}-(e_{j}\circ_{B}b)\otimes
f_{j}\big)$. Thus, we obtain $\nu_{\widehat{r}}(p\otimes b)=\nu(p\otimes b)$.
Moreover, by \cite[Theorem 4.17]{Hou}, we get $\vartheta=\vartheta_{\widehat{r}}$
if $\delta=\delta_{r}$. That is to say, $(P\otimes B, \circ, \ast, \nu, \vartheta)
=(P\otimes B, \circ, \ast, \nu_{\widehat{r}}, \vartheta_{\widehat{r}})$ as completed
DPP bialgebras if $\Delta=\Delta_{r}$ and $\delta=\delta_{r}$ for some
$r\in A\otimes A$. Therefore, we get $(P\otimes B, \circ, \ast, \nu, \vartheta)$ is
a coboundary completed DPP bialgebra if $(P, \cdot, [-,-], \Delta, \delta)$
is coboundary. Finally, by Proposition \ref{pro:PYBE-DPYBE}, we get that
$(P\otimes B, \circ, \ast, \nu, \vartheta)$ is a quasi-triangular (resp. triangular)
completed DPP bialgebra if $(P, \cdot, [-,-], \Delta, \delta)$ is
quasi-triangular (resp. triangular).
\end{proof}

It is worth noting that this theorem also gives the following commutative diagram:
$$
\xymatrix@C=3cm@R=0.5cm{
\txt{$r$ \\ {\tiny a solution of the $\PoiYBE$ in $(A, \cdot, [-,-])$}\\
{\tiny such that $r+\tau(r)$ is Poi-invariant}}
\ar[d]_{{\rm Pro.}~\ref{pro:PYBE-DPYBE}}\ar[r]^{{\rm Pro.}~\ref{pro:poi-bia}} &
\txt{$(\g, [-,-], \delta_{r})$ \\ {\tiny a quasi-triangular Poisson bialgebra}}
\ar[d]^{{\rm Thm.}~\ref{thm:indu-DPbia}}_{{\rm Thm.}~\ref{thm:P+perm=dP}} \\
\txt{$\widehat{r}$ \\ {\tiny a completed solution of the $\DPYBE$}\\ {\tiny
in $(\g\otimes B, \ast)$ such that $\widehat{r}-\hat{\tau}(\widehat{r})$ is DP-invariant}}
\ar[r]^{{\rm Pro.}~\ref{pro:dP-tri}\quad} &
\txt{$(P\otimes B, \circ, \ast, \nu, \vartheta)=(P\otimes B, \circ, \ast, \nu_{\widehat{r}},
\vartheta_{\widehat{r}})$ \\
{\tiny a completed quasi-triangular DPP bialgebra}}}
$$

\begin{ex}\label{ex:ind-ctri}
Consider 3-dimensional Poisson algebra $(P=\Bbbk\{e_{1}, e_{2}, e_{3}\}, \cdot, [-,-])$,
where the nonzero product are given by $e_{1}e_{1}=e_{2}$ and $[e_{1}, e_{3}]=e_{3}$.
Let $r=e_{2}\otimes e_{3}-e_{3}\otimes e_{2}$. It is easy to see that $r$ is a
skew-symmetric solution of the $\PoiYBE$ in $(P, \cdot, [-,-])$. Thus we get a
triangular Poisson bialgebra $(P, \cdot, [-,-], \Delta_{r}, \delta_{r})$, where
$\Delta_{r}=0$, $\delta_{r}(e_{1})=e_{2}\otimes e_{3}-e_{3}\otimes e_{2}$ and
$\delta_{r}(e_{2})=\delta_{r}(e_{3})=0$. Consider the quadratic $\bz$-graded perm
algebra $(B=\oplus_{i\in\bz}B_{i}, \circ_{B}, \omega)$ given in Example \ref{ex:qu-perm}.
Then, by Theorem \ref{thm:P+perm=dP}, we obtain a DPP bialgebra $(P\otimes B,
\circ, \ast, \nu, \vartheta)$, where
\begin{align*}
(x_{1}^{i_{1}}x_{2}^{i_{2}}\partial_{s}e_{1})\circ&(x_{1}^{j_{1}}x_{2}^{j_{2}}\partial_{t}e_{1})
=\delta_{s,1}x_{1}^{i_{1}+j_{1}+1}x_{2}^{i_{2}+j_{2}}\partial_{t}e_{1}
+\delta_{s,2}x_{1}^{i_{1}+j_{1}}x_{2}^{i_{2}+j_{2}+1}\partial_{t}e_{1},\\
(x_{1}^{i_{1}}x_{2}^{i_{2}}\partial_{s}e_{1})\ast&(x_{1}^{j_{1}}x_{2}^{j_{2}}\partial_{t}e_{3})
=\delta_{s,1}x_{1}^{i_{1}+j_{1}+1}x_{2}^{i_{2}+j_{2}}\partial_{t}e_{3}
+\delta_{s,2}x_{1}^{i_{1}+j_{1}}x_{2}^{i_{2}+j_{2}+1}\partial_{t}e_{3},\\
(x_{1}^{i_{1}}x_{2}^{i_{2}}\partial_{s}e_{3})\ast&(x_{1}^{j_{1}}x_{2}^{j_{2}}\partial_{t}e_{1})
=-\delta_{s,1}x_{1}^{i_{1}+j_{1}+1}x_{2}^{i_{2}+j_{2}}\partial_{t}e_{3}
-\delta_{s,2}x_{1}^{i_{1}+j_{1}}x_{2}^{i_{2}+j_{2}+1}\partial_{t}e_{3},\\
\vartheta(x_{1}^{m}x_{2}^{n}\partial_{1}e_{1})&=\sum_{i_{1}, i_{2}\in\bz}
\Big(x_{1}^{i_{1}}x_{2}^{i_{2}}\partial_{1}e_{2}\otimes x_{1}^{m-i_{1}}
x_{2}^{n-i_{2}+1}\partial_{1}e_{3}-x_{1}^{i_{1}}x_{2}^{i_{2}}\partial_{2}e_{2}
\otimes x_{1}^{m-i_{1}+1}x_{2}^{n-i_{2}}\partial_{1}e_{3}\\[-4mm]
&\qquad\quad+x_{1}^{i_{1}}x_{2}^{i_{2}}\partial_{2}e_{3}
\otimes x_{1}^{m-i_{1}+1}x_{2}^{n-i_{2}}\partial_{1}e_{2}
-x_{1}^{i_{1}}x_{2}^{i_{2}}\partial_{1}e_{3}\otimes x_{1}^{m-i_{1}}
x_{2}^{n-i_{2}+1}\partial_{1}e_{2}\Big), \\
\vartheta(x_{1}^{m}x_{2}^{n}\partial_{2}e_{1})&=\sum_{i_{1}, i_{2}\in\bz}
\Big(x_{1}^{i_{1}}x_{2}^{i_{2}}\partial_{1}e_{2}\otimes x_{1}^{m-i_{1}}
x_{2}^{n-i_{2}+1}\partial_{2}e_{3}-x_{1}^{i_{1}}x_{2}^{i_{2}}\partial_{2}e_{2}
\otimes x_{1}^{m-i_{1}+1}x_{2}^{n-i_{2}}\partial_{2}e_{3}\\[-4mm]
&\qquad\quad+x_{1}^{i_{1}}x_{2}^{i_{2}}\partial_{2}e_{3}
\otimes x_{1}^{m-i_{1}+1}x_{2}^{n-i_{2}}\partial_{2}e_{2}
-x_{1}^{i_{1}}x_{2}^{i_{2}}\partial_{1}e_{3}\otimes x_{1}^{m-i_{1}}
x_{2}^{n-i_{2}+1}\partial_{2}e_{2}\Big),
\end{align*}
and others all are zero.

On the other hand, by the skew-symmetric solution $r$ of $\PoiYBE$ in $(P, \cdot, [-,-])$,
we can get obtain a symmetric solution of $\DPYBE$ in $(P\otimes B, \circ, \ast)$:
\begin{align*}
\widehat{r}&=\sum_{i_{1}, i_{2}\in\bz}\Big(x_{1}^{i_{1}}x_{2}^{i_{2}}\partial_{1}e_{2}
\otimes x_{1}^{-i_{1}}x_{2}^{-i_{2}}\partial_{2}e_{3}
-x_{1}^{i_{1}}x_{2}^{i_{2}}\partial_{1}e_{3}\otimes
x_{1}^{-i_{1}}x_{2}^{-i_{2}}\partial_{2}e_{2}\\[-5mm]
&\qquad\qquad -x_{1}^{i_{1}}x_{2}^{i_{2}}\partial_{2}e_{2}
\otimes x_{1}^{-i_{1}}x_{2}^{-i_{2}}\partial_{1}e_{3}
+x_{1}^{i_{1}}x_{2}^{i_{2}}\partial_{2}e_{3}
\otimes x_{1}^{-i_{1}}x_{2}^{-i_{2}}\partial_{1}e_{2}\Big).
\end{align*}
This symmetric solution induces a triangular DPP bialgebra $(P\otimes B,
\circ, \ast, \nu_{\widehat{r}}, \vartheta_{\widehat{r}})$. One can check
$\nu_{\widehat{r}}=\nu$ and $\vartheta_{\widehat{r}}=\vartheta$ as defined above.
\end{ex}

At the end of this section, we consider the finite-dimensional DPP bialgebras
constructed from Poisson bialgebras. First, in Theorem \ref{thm:indu-DPbia},
if the quadratic $\bz$-graded perm algebra $(B=\oplus_{i\in\bz}B_{i}, \circ_{B}, \omega)$
is a finite-dimensional quadratic perm algebra, then we get that the induced
finite-dimensional DPP bialgebra $(P\otimes B, \circ, \ast, \nu, \vartheta)$
is coboundary (resp. quasi-triangular, triangular) if $(P, \cdot, [-,-], \Delta, \delta)$
is coboundary (resp. quasi-triangular, triangular). Following, we consider the factorizable
DPP bialgebra obtained by this method.

\begin{pro}\label{pro:indu-factbia}
Let $(P, \cdot, [-,-], \Delta, \delta)$ be a finite-dimensional Poisson bialgebra,
$(B, \circ_{B}, \omega)$ be a quadratic perm algebra and $(P\otimes B, \circ, \ast,
\nu, \vartheta)$ be the induced DPP bialgebra from $(P, \cdot, [-,-], \Delta,
\delta)$ by $(B, \circ_{B}, \omega)$. Then we have
$(P\otimes B, \circ, \ast, \nu, \vartheta)$ is factorizable if $(P, \cdot, [-,-],
\Delta, \delta)$ is factorizable.
\end{pro}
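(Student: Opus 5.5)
The plan is to reduce factorizability of the induced bialgebra to a statement about the map $\mathcal{I}$ on $P\otimes B$, which splits as a tensor product. Since $(P, \cdot, [-,-], \Delta, \delta)$ is factorizable, it is in particular quasi-triangular: $\Delta=\Delta_{r}$ and $\delta=\delta_{r}$ for some solution $r=\sum_{i}x_{i}\otimes y_{i}$ of the $\PoiYBE$ with $r+\tau(r)$ Poi-invariant, and $\mathcal{I}_{P}=r^{\sharp}+\tau(r)^{\sharp}$ is bijective. Because $B$ is finite-dimensional ($B=B_{0}$), Theorem \ref{thm:indu-DPbia} applies with $\hat{\otimes}=\otimes$. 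It gives $\nu=\nu_{\widehat{r}}$ and $\vartheta=\vartheta_{\widehat{r}}$ with $\widehat{r}=\sum_{i,j}(x_{i}\otimes e_{j})\otimes(y_{i}\otimes f_{j})$. By Proposition \ref{pro:PYBE-DPYBE}, $\widehat{r}$ solves the $\DPYBE$ and $\widehat{r}-\tau(\widehat{r})$ is DP-invariant. Hence $(P\otimes B, \circ, \ast, \nu, \vartheta)$ is a quasi-triangular DPP bialgebra, by Proposition \ref{pro:sLib-bia}(i). It remains only to show that $\mathcal{I}_{P\otimes B}=\widehat{r}^{\sharp}-\tau(\widehat{r})^{\sharp}:(P\otimes B)^{\ast}\to P\otimes B$ is a linear isomorphism.

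For this I will compute $\widehat{r}^{\sharp}$ on $(P\otimes B)^{\ast}\cong P^{\ast}\otimes B^{\ast}$. Pairing with $\xi\otimes\alpha$ gives
\[
\widehat{r}^{\sharp}(\xi\otimes\alpha)=r^{\sharp}(\xi)\otimes\Big(\sum_{j}\langle\alpha, e_{j}\rangle f_{j}\Big)=r^{\sharp}(\xi)\otimes\kappa(\alpha),
\]
where $\kappa:=\big(\sum_{j}e_{j}\otimes f_{j}\big)^{\sharp}:B^{\ast}\to B$. The map $\kappa$ is invertible: up to sign it is the isomorphism $J_{\omega}$ induced by the nondegenerate form $\omega$, because $\omega(f_{i}, e_{j})=\delta_{ij}$.

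Next I handle the flip. Proposition \ref{pro:PYBE-DPYBE} shows $\sum_{j}f_{j}\otimes e_{j}=-\sum_{j}e_{j}\otimes f_{j}$, so
\[
\tau(\widehat{r})=\sum_{i,j}(y_{i}\otimes f_{j})\otimes(x_{i}\otimes e_{j})=-\sum_{i,j}(y_{i}\otimes e_{j})\otimes(x_{i}\otimes f_{j}),
\]
which gives $\tau(\widehat{r})^{\sharp}=-\tau(r)^{\sharp}\otimes\kappa$. Therefore
\[
\mathcal{I}_{P\otimes B}=(r^{\sharp}+\tau(r)^{\sharp})\otimes\kappa=\mathcal{I}_{P}\otimes\kappa .
\]
This is a tensor product of two isomorphisms of finite-dimensional spaces, so it is an isomorphism, and the induced DPP bialgebra is factorizable by Definition \ref{def:fact-Leib}.

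The main obstacle is the sign bookkeeping in the last step. The skew-symmetry of $\sum_{j}e_{j}\otimes f_{j}$, inherited from $\omega$, must convert the minus sign in the DPP convention $r^{\sharp}-\tau(r)^{\sharp}$ into the plus sign of the Poisson convention $r^{\sharp}+\tau(r)^{\sharp}$. I would check this carefully by pairing both sides with $\xi\otimes\alpha$ and $\eta\otimes\beta$ and computing directly.
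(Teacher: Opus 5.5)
Your proposal is correct and follows essentially the same route as the paper: quasi-triangularity comes from Theorem \ref{thm:indu-DPbia} together with Proposition \ref{pro:PYBE-DPYBE}, and factorizability comes from $\widehat{r}^{\sharp}=r^{\sharp}\otimes\kappa^{\sharp}$ and $\tau(\widehat{r})^{\sharp}=-\tau(r)^{\sharp}\otimes\kappa^{\sharp}$, so that $\widehat{\mathcal{I}}=\mathcal{I}\otimes\kappa^{\sharp}$ is an isomorphism. One small precision: with the normalization $\omega(f_{i},e_{j})=\delta_{ij}$, one has $\kappa^{\sharp}=J_{\omega}$ exactly, not merely up to sign.
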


\begin{proof}
Suppose $(P, \cdot, [-,-], \Delta, \delta)$ is a factorizable Poisson bialgebra,
i.e., $\Delta=\Delta_{r}$ and $\delta=\delta_{r}$ for some $r=\sum_{i}x_{i}\otimes
y_{i}\in P\otimes P$ and $\mathcal{I}=r^{\sharp}+\tau(r)^{\sharp}: P^{\ast}\rightarrow P$
is an isomorphism of vector spaces. By Theorem \ref{thm:indu-DPbia}, we get that
$(P\otimes B, \circ, \ast, \nu, \vartheta)=(P\otimes B, \circ, \ast, \nu_{\widehat{r}},
\vartheta_{\widehat{r}})$ is a DPP bialgebra, where $\widehat{r}$ is given
by Eq. \eqref{r-indP}. Here we need to show that $\widehat{\mathcal{I}}:=
\widehat{r}^{\sharp}-\tau(\widehat{r})^{\sharp}: (P\otimes B)^{\ast}\rightarrow
P\otimes B$ is an isomorphism of vector spaces. Denote $\kappa:=\sum_{j}e_{j}\otimes
f_{j}\in B\otimes B$, where $\{e_{1}, e_{2},\cdots, e_{n}\}$ is a basis of $B$
and $\{f_{1}, f_{2},\cdots, f_{n}\}$ is the dual basis of it with respect to $\omega(-,-)$.
Define $\kappa^{\sharp}: B^{\ast}\rightarrow B$ by
$\langle\kappa^{\sharp}(\xi_{1}),\; \xi_{2}\rangle=\langle\xi_{1}
\otimes\xi_{2},\; \kappa\rangle$, for any $\xi_{1}, \xi_{2}\in B^{\ast}$.
Then, one can check that $\kappa^{\sharp}$ is a linear isomorphism and
$\langle\kappa^{\sharp}(\xi_{1}),\; \xi_{2}\rangle=\langle\xi_{1}\otimes\xi_{2},\;
\kappa\rangle=-\langle\xi_{2}\otimes\xi_{1},\; \kappa\rangle
=-\langle\kappa^{\sharp}(\xi_{2}),\; \xi_{1}\rangle$.
Therefore, we have
\begin{align*}
\langle\widehat{r}^{\sharp}(\eta_{1}\otimes\xi_{1}),\;\eta_{2}\otimes\xi_{2}\rangle
&=\sum_{i,j}\langle(\eta_{1}\otimes\xi_{1})\otimes(\eta_{2}\otimes\xi_{2}),\ \
(x_{i}\otimes e_{j})\otimes(y_{i}\otimes f_{j})\rangle\\[-2mm]
&=\Big(\sum_{j}\langle\eta_{1}, x_{i}\rangle\langle\eta_{2}, y_{i}\rangle\Big)
\Big(\sum_{i}\langle\xi_{1}, e_{j}\rangle\langle\xi_{2}, f_{j}\rangle\Big)\\[-2mm]
&=\langle r^{\sharp}(\eta_{1}),\; \eta_{2}\rangle
\langle\kappa^{\sharp}(\xi_{1}),\; \xi_{2}\rangle\\
&=\langle r^{\sharp}(\eta_{1})\otimes\kappa^{\sharp}(\xi_{1}),\;\eta_{2}\otimes\xi_{2}\rangle.
\end{align*}
for any $\xi_{1}, \xi_{2}\in B^{\ast}$ and $\eta_{1}, \eta_{2}\in P^{\ast}$.
That is, $\widehat{r}^{\sharp}=r^{\sharp}\otimes\kappa^{\sharp}$, Similarly,
$\tau(\widehat{r})^{\sharp}=-\tau(r)^{\sharp}\otimes\kappa^{\sharp}$. Thus,
$\widehat{\mathcal{I}}=\mathcal{I}\otimes\kappa^{\sharp}$ is an isomorphism of
vector spaces, $(P\otimes B, \circ, \ast, \nu, \vartheta)$ is a
factorizable DPP bialgebra.
\end{proof}

In \cite{Kup}, Kupershmidt found that the $\CYBE$ in tensor form on Lie algebras can
be converted into an $\mathcal{O}$-operator associated to the coadjoint representation.
This conclusion has been confirmed on various types of algebras.
Let $(A, \circ, \ast)$ be a DPP algebra and $(V, \kl, \kr, \kkl, \kkr)$ be a
representation of it. A linear map $T: V\rightarrow A$ is called an
{\bf $\mathcal{O}$-operator of $(A, \circ, \ast)$ associated to $(V, \kl, \kr,
\kkl, \kkr)$} if for any $v_{1}, v_{2}\in V$,
\begin{align*}
T(v_{1})\circ T(v_{2})&=T\big(\kl(T(v_{1}))(v_{2})+\kr(T(v_{2}))(v_{1})\big),\\
T(v_{1})\ast T(v_{2})&=T\big(\kkl(T(v_{1}))(v_{2})+\kkr(T(v_{2}))(v_{1})\big).
\end{align*}

\begin{pro}[\cite{Lu}]\label{pro:o-dp}
Let $(A, \circ, \ast)$ be a DPP algebra and $r\in A\otimes A$ be symmetric.
Then $r$ is a solution of the $\DPYBE$ in $(A, \circ, \ast)$ if and only if
$r^{\sharp}: A^{\ast}\rightarrow A$ is an $\mathcal{O}$-operator of $(A, \circ, \ast)$
associated to the coregular representation $(A^{\ast}, -\fl_{A}^{\ast},
\fr_{A}^{\ast}-\fl_{A}^{\ast}, \ffl_{A}^{\ast}, -\ffl_{A}^{\ast}-\ffr_{A}^{\ast})$.
\end{pro}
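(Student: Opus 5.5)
The statement is a Kupershmidt-type correspondence, so the plan is to dualize both Yang-Baxter equations, pair them against three elements of $A^{\ast}$, and compare with the two $\mathcal{O}$-operator identities for $T=r^{\sharp}$. Since the DPYBE is the conjunction of $\mathbf{P}_r=0$ and $\mathbf{L}_r=0$, and the $\mathcal{O}$-operator condition is the conjunction of a perm-part identity and a Leibniz-part identity, the proof splits into two independent equivalences:
\begin{enumerate}
\item $\mathbf{P}_r=0$ if and only if $r^{\sharp}$ is an $\mathcal{O}$-operator of the perm algebra $(A,\circ)$ associated to $(A^{\ast},-\fl_A^{\ast},\fr_A^{\ast}-\fl_A^{\ast})$;
\item $\mathbf{L}_r=0$ if and only if $r^{\sharp}$ is an $\mathcal{O}$-operator of the Leibniz algebra $(A,\ast)$ associated to $(A^{\ast},\ffl_A^{\ast},-\ffl_A^{\ast}-\ffr_A^{\ast})$.
\end{enumerate}
Each equivalence is essentially the symmetric-$r$ case of the known results for perm algebras \cite{Lin} and Leibniz algebras \cite{BLST}. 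I would still verify both directly, since the sign conventions for the coregular representation here may differ from those papers.

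For each part, I would write $r=\sum_i x_i\otimes y_i$ with $r=\tau(r)$, and take $\xi_1,\xi_2,\xi_3\in A^{\ast}$. Using $\langle r^{\sharp}(\xi),\eta\rangle=\langle\xi\otimes\eta,r\rangle$ and symmetry, we have $r^{\sharp}(\xi)=\sum_i\langle\xi,x_i\rangle y_i=\sum_i\langle\xi,y_i\rangle x_i$. The plan is then to compute the following two quantities, for $\star\in\{\circ,\ast\}$ and with $\beta_1,\beta_2$ the corresponding pair of coregular maps:
\begin{itemize}
\item $\langle \xi_3,\; r^{\sharp}(\xi_1)\star r^{\sharp}(\xi_2)-r^{\sharp}\big(\beta_1(r^{\sharp}\xi_1)\xi_2+\beta_2(r^{\sharp}\xi_2)\xi_1\big)\rangle$;
\item the pairing $\langle \mathbf{P}_r \text{ (resp. } \mathbf{L}_r),\ \xi_{\sigma(1)}\otimes\xi_{\sigma(2)}\otimes\xi_{\sigma(3)}\rangle$ for a suitable permutation $\sigma$.
\end{itemize}

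The first quantity expands into a product term and, via the definition of $\beta^{\ast}$, terms of the form $\langle\xi,\ a\star x_i\rangle$ and $\langle\xi,\ x_i\star a\rangle$ multiplied by coefficients of $r$. Each summand then matches exactly one of the four terms $r_{12}\circ r_{23}$, $r_{13}\circ r_{23}$, $r_{12}\circ r_{13}$, $r_{13}\circ r_{12}$ (respectively the four Leibniz terms) evaluated on a permuted triple. Symmetry of $r$ is used repeatedly to move the legs of $r$ around. Because $\xi_1,\xi_2,\xi_3$ are arbitrary, this identification gives both directions of each equivalence at once.

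The main obstacle is bookkeeping. I have to find the permutation $\sigma$ under which the four summands of $\mathbf{P}_r$ line up, with correct signs, against the product term and the $-\fl^{\ast}$ and $\fr^{\ast}-\fl^{\ast}$ contributions. I would first try $\sigma$ sending $(\xi_1,\xi_2,\xi_3)$ to the slots $(2,3,1)$, that is, placing $\xi_3$ on the leg carrying the product, and adjust if the signs fail. For the perm part, the term $r_{13}\circ r_{12}$ comes from the $\fl^{\ast}$ piece of $\fr^{\ast}-\fl^{\ast}$ after using $r=\tau(r)$. For the Leibniz part, the $-\ffl^{\ast}-\ffr^{\ast}$ piece produces two of the four terms of $\mathbf{L}_r$. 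If the direct bookkeeping becomes unwieldy, I would fall back on quoting the perm and Leibniz cases from \cite{Lin} and \cite{BLST} and conclude by combining the two equivalences.
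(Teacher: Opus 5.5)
Your strategy is the standard argument: split the $\DPYBE$ into $\mathbf{P}_r=0$ and $\mathbf{L}_r=0$, split the $\mathcal{O}$-operator condition into its $\circ$- and $\ast$-parts, and compare after pairing with $\xi_3$. The paper gives no proof here (it quotes \cite{Lu}), and it handles its other DPP statements by the same perm/Leibniz splitting. The comparison is quick if you set $\Phi_{\star}(\alpha;\beta,\gamma):=\langle\alpha,\ r^{\sharp}(\beta)\star r^{\sharp}(\gamma)\rangle$ for $\star\in\{\circ,\ast\}$. Using $r=\tau(r)$, pairing with $\xi_a\otimes\xi_b\otimes\xi_c$ sends $r_{12}\star r_{13}$, $r_{12}\star r_{23}$, $r_{23}\star r_{12}$, $r_{23}\star r_{13}$, $r_{13}\circ r_{23}$, $r_{13}\circ r_{12}$ to $\Phi_{\star}(a;b,c)$, $\Phi_{\star}(b;a,c)$, $\Phi_{\star}(b;c,a)$, $\Phi_{\star}(c;b,a)$, $\Phi_{\circ}(c;a,b)$, $\Phi_{\circ}(a;c,b)$ respectively. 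Also $\langle\xi_3, r^{\sharp}(\eta)\rangle=\langle\eta, r^{\sharp}(\xi_3)\rangle$. Use the representation signs of the statement, not those printed in Section 2: there the perm part has the opposite overall sign and is not a representation in general. With the statement's signs, the $\circ$-defect is $\Phi_{\circ}(3;1,2)-\Phi_{\circ}(2;1,3)-\Phi_{\circ}(1;2,3)+\Phi_{\circ}(1;3,2)=-\langle\xi_1\otimes\xi_2\otimes\xi_3,\ \mathbf{P}_r\rangle$. So part (1) holds with no permutation at all, and your guessed $\sigma$ fails there. Also, $r_{13}\circ r_{12}$ comes from the $\fr_A^{\ast}$ piece of $\fr_A^{\ast}-\fl_A^{\ast}$; the $-\fl_A^{\ast}$ piece gives $r_{12}\circ r_{13}$.

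The genuine gap is in part (2), and choosing $\sigma$ will not fix it. The $\ast$-defect is $\Phi_{\ast}(3;1,2)+\Phi_{\ast}(2;1,3)-\Phi_{\ast}(1;2,3)-\Phi_{\ast}(1;3,2)$. This equals $\langle\xi_2\otimes\xi_1\otimes\xi_3,\ r_{12}\ast r_{13}-r_{12}\ast r_{23}-r_{23}\ast r_{12}+r_{23}\ast r_{13}\rangle$, and equally the pairing with $\xi_3\otimes\xi_1\otimes\xi_2$, so your $\sigma$ does work here. That is the form of $\mathbf{L}_r$ printed in Section 4. The definition of the $\DPYBE$ in Section 3, however, has $+r_{12}\ast r_{23}$, and with that sign your equivalence (2), and the proposition itself, is false. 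Pair the Section 3 expression with $\xi_a\otimes\xi_b\otimes\xi_c$ and with $\xi_c\otimes\xi_b\otimes\xi_a$, then take sum and difference. This gives $\Phi_{\ast}(a;b,c)=-\Phi_{\ast}(c;b,a)$ and $\Phi_{\ast}(b;a,c)=\Phi_{\ast}(b;c,a)$, which together force $\Phi_{\ast}\equiv 0$, i.e. $r^{\sharp}(\xi)\ast r^{\sharp}(\eta)=0$ for all $\xi,\eta$. For a concrete failure, take the DPP algebra $(\mathfrak{sl}_2,0,[-,-])$ and the Casimir element $r$. Here $\kkr=0$, and the $\ast$-part of the $\mathcal{O}$-operator identity is exactly ad-invariance of the Killing form, so $r^{\sharp}$ is an $\mathcal{O}$-operator. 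Yet $r^{\sharp}$ is bijective and $\mathfrak{sl}_2$ is not abelian, so $\Phi_{\ast}\not\equiv 0$. You must therefore state explicitly that you use the Section 4 form of $\mathbf{L}_r$. With that convention your direct comparison closes both directions. Your fallback of citing \cite{BLST} is valid only after checking that their equation is this one.
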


The $\mathcal{O}$-operator of Poisson algebra was considered in \cite{NB}.
Let $(P, \cdot, [-,-])$ be a Poisson algebra, $V$ be a vector space and
$\mu, \rho: P\rightarrow\gl(V)$ be two linear maps. Recall that $(V, \mu, \rho)$ is
called a {\bf representation of} $(P, \cdot, [-,-])$, if $(V, \rho)$ is a
representation of the Lie algebra $(P, [-,-])$ and $(V, \mu)$ is a representation
of the commutative associative algebra $(P, \cdot)$ and $\mu, \rho$ satisfy
the following conditions: $\rho(p_{1}p_{2})=\mu(p_{2})\rho(p_{1})+\mu(p_{1})
\rho(p_{2})$ and $\mu([p_{1}, p_{2}])=\rho(p_{1})\mu(p_{2})-\mu(p_{2})\rho(p_{1})$
for any $p_{1}, p_{2}\in P$. In particular, $(P, \fu_{P}, \ad_{P})$ is a
representation of the Poisson algebra $(P, \cdot, [-,-])$, which is called the
{\bf regular representation} of $(P, \cdot, [-,-])$. Moreover, $(P^{\ast}, -\fu^{\ast}_{P},
\ad^{\ast}_{P})$ is also a representation of the Poisson algebra $(P, \cdot, [-,-])$,
which is called the {\bf coregular representation} of $(P, \cdot, [-,-])$.
Recall that an {\bf $\mathcal{O}$-operator of a Poisson algebra $(P, \cdot, [-,-])$
associated to a representation $(V, \mu, \rho)$} is a linear map $T: V\rightarrow P$ such that
\begin{align*}
T(v_{1})T(v_{2})&=T\big(\mu(T(v_{1}))(v_{2})+\mu(T(v_{2}))(v_{1})\big),\\
[T(v_{1}), T(v_{2})]&=T\big(\rho(T(v_{1}))(v_{2})-\rho(T(v_{2}))(v_{1})\big),
\end{align*}
for any $v_{1}, v_{2}\in V$.

\begin{pro}[\cite{NB}]\label{pro:o-poi}
Let $(P, \cdot, [-,-])$ be a Poisson algebra and $r\in P\otimes P$ be skew-symmetric.
Then $r$ is a solution of the $\PoiYBE$ in $(P, \cdot, [-,-])$ if and only if $r^{\sharp}$
is an $\mathcal{O}$-operator of $(P, \cdot, [-,-])$ associated to the coregular
representation $(P^{\ast}, -\fu_{P}^{\ast}, \ad_{P}^{\ast})$.
\end{pro}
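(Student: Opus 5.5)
The statement is cited from \cite{NB}, but a self-contained argument is short. I will reduce it to the two classical Kupershmidt-type correspondences, one for the commutative associative part and one for the Lie part. Write $r=\sum_i x_i\otimes y_i$ with $r=-\tau(r)$, so that $\langle r^{\sharp}(\xi),\eta\rangle=\langle\xi\otimes\eta,r\rangle$ and $\langle r^{\sharp}(\xi),\eta\rangle=-\langle r^{\sharp}(\eta),\xi\rangle$. The $\mathcal{O}$-operator conditions for $(P^{\ast},-\fu_{P}^{\ast},\ad_{P}^{\ast})$ split into an associative identity and a Lie identity. I will show that the associative identity, $r^{\sharp}(\xi)r^{\sharp}(\eta)=r^{\sharp}\big(-\fu_{P}^{\ast}(r^{\sharp}(\xi))\eta-\fu_{P}^{\ast}(r^{\sharp}(\eta))\xi\big)$, is equivalent to $\mathbf{A}_{r}=0$. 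I will show that the Lie identity, $[r^{\sharp}(\xi),r^{\sharp}(\eta)]=r^{\sharp}\big(\ad_{P}^{\ast}(r^{\sharp}(\xi))\eta-\ad_{P}^{\ast}(r^{\sharp}(\eta))\xi\big)$, is equivalent to $\mathbf{C}_{r}=0$. Here the dual actions are the ones making $(P^{\ast},-\fu_{P}^{\ast},\ad_{P}^{\ast})$ the coregular representation, namely $\langle\fu_{P}^{\ast}(p)\xi,q\rangle=\langle\xi,pq\rangle$ up to the paper's sign convention, and $\langle\ad_{P}^{\ast}(p)\xi,q\rangle=-\langle\xi,[p,q]\rangle$.

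The key step is to pair each side of the identities with a third functional $\zeta\in P^{\ast}$ and express everything as $\langle\xi\otimes\eta\otimes\zeta,\,\cdot\,\rangle$ evaluated on the terms of $\mathbf{A}_{r}$ or $\mathbf{C}_{r}$, with indices suitably permuted.

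\emph{Lie part.} The left side gives $\langle[r^{\sharp}(\xi),r^{\sharp}(\eta)],\zeta\rangle=\sum_{i,j}\langle\xi,x_i\rangle\langle\eta,x_j\rangle\langle\zeta,[y_i,y_j]\rangle=\langle\xi\otimes\eta\otimes\zeta,[r_{13},r_{23}]\rangle$. On the right, I use skew-symmetry to move $r^{\sharp}$ to the other slot, $\langle r^{\sharp}(\alpha),\zeta\rangle=-\langle r^{\sharp}(\zeta),\alpha\rangle$. Then $\langle r^{\sharp}(\ad_{P}^{\ast}(r^{\sharp}(\xi))\eta),\zeta\rangle=\langle\eta,[r^{\sharp}(\xi),r^{\sharp}(\zeta)]\rangle$, which after expanding and applying $r=-\tau(r)$ is identified with $\langle\xi\otimes\eta\otimes\zeta,[r_{12},r_{23}]\rangle$. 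The remaining term gives $[r_{12},r_{13}]$ in the same way. The difference of the two sides is therefore $\pm\langle\xi\otimes\eta\otimes\zeta,\mathbf{C}_{r}\rangle$. Since $\xi,\eta,\zeta$ are arbitrary and $P$ is finite-dimensional, the equivalence follows.

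\emph{Associative part.} The same computation gives $\langle r^{\sharp}(\xi)r^{\sharp}(\eta),\zeta\rangle=\langle\xi\otimes\eta\otimes\zeta,r_{13}r_{23}\rangle$. The two right-hand terms become $r_{12}r_{13}$ and $r_{23}r_{12}$ after transferring $r^{\sharp}$ across the pairing with skew-symmetry and using commutativity of $\cdot$. The difference is then $\langle\xi\otimes\eta\otimes\zeta,\mathbf{A}_{r}\rangle$ up to a global sign.

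The main obstacle is bookkeeping. The signs of $-\fu_{P}^{\ast}$ and of the dual-action convention must combine with the sign from $r=-\tau(r)$ to give exactly the combination $r_{12}r_{13}+r_{13}r_{23}-r_{23}r_{12}$, and likewise for $\mathbf{C}_{r}$. I would first fix the convention by checking it on a basis $\xi=e_a^{\ast}$, $\eta=e_b^{\ast}$, $\zeta=e_c^{\ast}$ and comparing coefficients. If a term appears with the wrong leg ordering, for instance $r_{13}r_{12}$ in place of $r_{12}r_{13}$, commutativity of $\cdot$ (resp. antisymmetry of $[-,-]$ together with $r=-\tau(r)$) will be used to reorder it. Once both equivalences hold, $r$ solves the $\PoiYBE$, i.e. $\mathbf{A}_{r}=\mathbf{C}_{r}=0$, if and only if $r^{\sharp}$ is an $\mathcal{O}$-operator associated to $(P^{\ast},-\fu_{P}^{\ast},\ad_{P}^{\ast})$.
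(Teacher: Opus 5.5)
Your proposal is correct and is the standard argument behind the result the paper cites from \cite{NB}; the paper gives no proof of its own. You split the $\mathcal{O}$-operator conditions into the commutative associative part and the Lie part, pair each with a third functional, and use $\langle r^{\sharp}(\alpha),\zeta\rangle=-\langle r^{\sharp}(\zeta),\alpha\rangle$. One sign in your Lie bookkeeping needs fixing: with $\langle\ad_{P}^{\ast}(p)\xi,q\rangle=-\langle\xi,[p,q]\rangle$, one gets $\langle r^{\sharp}(\ad_{P}^{\ast}(r^{\sharp}(\xi))\eta),\zeta\rangle=-\langle\xi\otimes\eta\otimes\zeta,[r_{12},r_{23}]\rangle$, not $+$, and $-\langle r^{\sharp}(\ad_{P}^{\ast}(r^{\sharp}(\eta))\xi),\zeta\rangle=-\langle\xi\otimes\eta\otimes\zeta,[r_{12},r_{13}]\rangle$. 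Hence the difference of the two sides is exactly $\langle\xi\otimes\eta\otimes\zeta,\mathbf{C}_{r}\rangle$. Likewise, with the representation-forcing convention $\langle\fu_{P}^{\ast}(p)\xi,q\rangle=-\langle\xi,pq\rangle$, the associative difference is exactly $\langle\xi\otimes\eta\otimes\zeta,\mathbf{A}_{r}\rangle$.
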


Let $(P, \cdot, [-,-])$ be a Poisson algebra, $r\in P\otimes P$ and $(B, \circ_{B},
\omega)$ be a quadratic perm algebra. By the proofs of Theorem \ref{thm:indu-DPbia}
and Proposition \ref{pro:indu-factbia}, we also have the following commutative diagram:
$$
\xymatrix@C=3cm@R=0.5cm{
\txt{$r$ \\ {\tiny a skew-symmetric solution} \\ {\tiny of the $\PoiYBE$ in
$(P, \cdot, [-,-])$}} \ar[d]_-{{\rm Pro.}~\ref{pro:PYBE-DPYBE}}\ar[r]^-{{\rm Pro.}~\ref{pro:o-poi}} &
\txt{$r^{\sharp}$\\ {\tiny an $\mathcal{O}$-operator of $(P, \cdot, [-,-])$} \\
{\tiny associated to $(P^{\ast}, -\fu_{P}^{\ast}, \ad_{P}^{\ast})$}}
\ar[d]^-{\mbox{$-\otimes\kappa^{\sharp}$}} \\
\txt{$\widetilde{r}$ \\ {\tiny a symmetric solution} \\ {\tiny of the $\DPYBE$ in
$(P\otimes B, \circ, \ast)$}} \ar[r]^-{{\rm Pro.}~\ref{pro:o-dp}}
& \txt{$\widehat{r}^{\sharp}=r^{\sharp}\otimes\kappa^{\sharp}$ \\
{\tiny an $\mathcal{O}$-operator of $(P\otimes B, \circ, \ast)$ associated to } \\
{\tiny $((P\otimes B)^{\ast}, -\fl_{\g\otimes B}^{\ast},
\fr_{\g\otimes B}^{\ast}-\fl_{\g\otimes B}^{\ast}, \ffl_{\g\otimes B}^{\ast},
-\ffl_{\g\otimes B}^{\ast}-\ffr_{\g\otimes B}^{\ast})$}}}
$$

Finally, we give a simple example.

\begin{ex}\label{ex:ind-tri}
Consider 3-dimensional triangular Poisson bialgebra $(P=\Bbbk\{e_{1}, e_{2}, e_{3}\},
\cdot, [-,-]$, $\Delta_{r}, \delta_{r})$ given in Example \ref{ex:ind-ctri}, i.e.,
$r=e_{2}\otimes e_{3}-e_{3}\otimes e_{2}$, $e_{1}e_{1}=e_{2}$, $[e_{1}, e_{3}]=e_{3}$,
$\Delta_{r}=0$, $\delta_{r}(e_{1})=e_{2}\otimes e_{3}-e_{3}\otimes e_{2}$ and
$\delta_{r}(e_{2})=\delta_{r}(e_{3})=0$. Let $2$-dimensional $B=\Bbbk\{x_{1}, x_{2}\}$.
If we define $x_{1}\circ_{B}x_{1}=x_{1}$ and $x_{1}\circ_{B} x_{2}=x_{2}$, then
$(B, \circ_{B})$ is a perm algebra. Moreover, if we define a skew-symmetric invariant
nondegenerate bilinear form $\omega(-,-)$ on $(B, \circ_{B})$ by $\omega(x_{1}, x_{2})=1
=-\omega(x_{2}, x_{1})$, then $(B, \circ, \omega)$ is a quadratic perm algebra.
By Theorem \ref{thm:P+perm=dP}, we obtain a DPP bialgebra $(P\otimes B,
\circ, \ast, \nu, \vartheta)$, where
\begin{align*}
&\qquad\quad \vartheta(e_{1}\otimes x_{1})=(e_{2}\otimes x_{2})\otimes(e_{3}\otimes x_{1})
-(e_{3}\otimes x_{2})\otimes(e_{2}\otimes x_{1}),\\
&\qquad\quad \vartheta(e_{1}\otimes x_{1})=(e_{2}\otimes x_{2})\otimes(e_{3}\otimes x_{2})
-(e_{3}\otimes x_{2})\otimes(e_{2}\otimes x_{2}),\\
& (e_{1}\otimes x_{1})\circ(e_{1}\otimes x_{1})=e_{1}\otimes x_{1},\qquad
\qquad (e_{1}\otimes x_{1})\circ(e_{1}\otimes x_{2})=e_{1}\otimes x_{2},\\
& (e_{1}\otimes x_{1})\ast(e_{3}\otimes x_{1})=e_{3}\otimes x_{1},\qquad
\qquad (e_{3}\otimes x_{1})\ast(e_{1}\otimes x_{2})=-e_{3}\otimes x_{2},\\
& (e_{1}\otimes x_{1})\ast(e_{3}\otimes x_{2})=e_{3}\otimes x_{2},\qquad
\qquad (e_{3}\otimes x_{1})\ast(e_{1}\otimes x_{2})=-e_{3}\otimes x_{2},
\end{align*}
and others all are zero. Define
$$
\widehat{r}=(e_{2}\otimes x_{2})\ast(e_{3}\otimes x_{1})
+(e_{3}\otimes x_{1})\ast(e_{2}\otimes x_{2})-(e_{2}\otimes x_{1})\ast(e_{3}\otimes x_{2})
-(e_{3}\otimes x_{2})\ast(e_{2}\otimes x_{1}).
$$
Then $\widehat{r}$ is a summetric solution of the $\DPYBE$ in $(P\otimes B, \circ, \ast)$.
By direct calculation, we get the linear map $r^{\sharp}: P^{\ast}\rightarrow P$ is given
by 
$$
r^{\sharp}(\xi_{1})=0,\qquad\qquad r^{\sharp}(\xi_{2})=e_{3},\qquad\qquad r^{\sharp}(\xi_{3})=-e_{2},
$$
where $\xi_{1}, \xi_{2}, \xi_{3}\in P^{\ast}$ is the dual basis of $\{e_{1}, e_{2}, e_{3}\}$;
the linear map $\kappa^{\sharp}: B^{\ast}\rightarrow B$ is given by 
$$
r^{\sharp}(\eta_{1})=-x_{2},\qquad\qquad\qquad r^{\sharp}(\eta_{2})=x_{1},
$$ 
where $\kappa=x_{2}\otimes x_{1}-x_{1}\otimes x_{2}$, 
$\eta_{1}, \eta_{2}\in B^{\ast}$ is the dual basis of $\{x_{1}, x_{2}\}$; 
the linear map $\widehat{r}^{\sharp}:(P\otimes B)^{\ast}\rightarrow
P\otimes B$ is given by 
\begin{align*}
&\widehat{r}^{\sharp}(\xi_{2}\otimes\eta_{1})=-e_{3}\otimes x_{2},\qquad\qquad
\widehat{r}^{\sharp}(\xi_{2}\otimes\eta_{2})=e_{3}\otimes x_{1},\\
&\widehat{r}^{\sharp}(\xi_{3}\otimes\eta_{1})=e_{2}\otimes x_{2},\qquad\qquad\ \ \;
\widehat{r}^{\sharp}(\xi_{3}\otimes\eta_{2})=-e_{2}\otimes x_{1}.
\end{align*}
It is easy to see $\widehat{r}^{\sharp}=r^{\sharp}\otimes\kappa^{\sharp}$, and it is
an $\mathcal{O}$-operator of $(P\otimes B, \circ, \ast)$ associated to the coregular
representation.
\end{ex}

\bigskip
\noindent
{\bf Acknowledgements. } This work was financially supported by
National Natural Science Foundation of China (No. 11771122).

\smallskip
\noindent
{\bf Declaration of interests.} The authors have no conflicts of interest to disclose.

\smallskip
\noindent
{\bf Data availability.} Data sharing is not applicable to this article as no new data were
created or analyzed in this study.

 \end{document}